\documentclass[11pt]{article}

\usepackage{etex}
\reserveinserts{28}
\usepackage[all]{xy}

\usepackage{amsfonts}
\usepackage{amsthm}
\usepackage{enumerate}
\usepackage{graphicx}
\usepackage{mathrsfs}
\usepackage{bm}
\usepackage{cite}
\usepackage[colorlinks,linkcolor=blue,citecolor=blue]{hyperref}
\usepackage{amssymb,amsmath} 
\usepackage{hyperref}
\usepackage{makecell}
\usepackage{pgf}
\usepackage{pgffor}
\usepackage{pgfcalendar}
\usepackage{pgfpages}
\usepackage{shuffle,yfonts}
\usepackage{mathtools}
\usepackage{tikz}
\usetikzlibrary{arrows,shapes,chains,patterns}
\textwidth=160truemm \textheight=225truemm \evensidemargin=0mm
\oddsidemargin=0mm \topmargin=0mm \headsep=0mm
\parindent=2em
 \allowdisplaybreaks

\theoremstyle{plain}
\newtheorem{thm}{Theorem}[subsection]
\newtheorem{lem}[thm]{Lemma}
\newtheorem{cor}[thm]{Corollary}
\newtheorem{con}[thm]{Conjecture}
\newtheorem{pro}[thm]{Proposition}

\theoremstyle{definition}

\newtheorem{re}[thm]{Remark}

\newtheorem{defn}{Definition}[section]

\catcode`!=11
\let\!int\int \def\int{\displaystyle\!int}
\let\!lim\lim \def\lim{\displaystyle\!lim}
\let\!sum\sum \def\sum{\displaystyle\!sum}
\let\!sup\sup \def\sup{\displaystyle\!sup}
\let\!inf\inf \def\inf{\displaystyle\!inf}
\let\!cap\cap \def\cap{\displaystyle\!cap}
\let\!max\max \def\max{\displaystyle\!max}
\let\!min\min \def\min{\displaystyle\!min}
\let\!frac\frac \def\frac{\displaystyle\!frac}
\catcode`!=12

\let\oldsection\section
\renewcommand\section{\setcounter{equation}{0}\oldsection}

\allowdisplaybreaks

\DeclareMathOperator*{\Cat}{\mathbf{Cat}}
\DeclareMathOperator*{\sign}{{sign}}

\newcommand{\gd}{\delta}
\newcommand\tpi{{\tilde{\pi}}}
\newcommand\ora{\overrightarrow}
\newcommand\om{{\omega}}
\newcommand\bfs{{\bf s}}
\newcommand\bfk{{\boldsymbol{\sl k}}}
\newcommand\bfl{{\boldsymbol{\sl l}}}
\newcommand\bfn{{\bf n}}
\newcommand\bfp{{\bf p}}
\newcommand\bfq{{\bf q}}
\newcommand\bfB{{\bf B}}
\newcommand\bfTB{{\bf TB}}
\newcommand\bfeta{{\boldsymbol \eta}}
\newcommand\eps{{\varepsilon}}
\newcommand\bfeps{{\boldsymbol \eps}}

\newcommand\CMZV{\mathsf{CMZV}}
\newcommand\AMTV{\mathsf{AMTV}}
\newcommand{\AMMV}{\mathsf {AMMV}}

\newcommand\upi{{i}}

\newcommand\ol{\overline}

\def\R{\mathbb{R}}
\def\N{\mathbb{N}}\def\Z{\mathbb{Z}}
\def\Q{\mathbb{Q}}

\def\su{\sum\limits_{n=1}^\infty}

\def\t{\widetilde{t}}
\def\S{\widetilde{S}}
\def\z{\zeta}
\def\T{{\bar T}}

\def\tt{\left(\frac{1-t}{1+t} \right)}

\newcommand\bfsi{{\boldsymbol \sigma}}

\setlength{\arraycolsep}{0.5mm}

\begin{document}
\title {\bf Alternating Multiple $T$-Values: Weighted Sums, Duality, and Dimension Conjecture}
\author{
{Ce Xu${}^{a,}$\thanks{Email: cexu2020@ahnu.edu.cn}\quad Jianqiang Zhao${}^{b,}$\thanks{Email: zhaoj@ihes.fr}}\\[1mm]
\small a. School of Mathematics and Statistics, Anhui Normal University,\\ \small Wuhu 241000, P.R. China\\
\small b. Department of Mathematics, The Bishop's School, La Jolla,\\ \small  CA 92037, United States of America\\
[5mm]
Dedicated to professor Masanobu Kaneko on the occasion of his 60th birthday}

\date{}
\maketitle \noindent{\bf Abstract} In this paper, we define some weighted sums of the alternating multiple $T$-values (AMTVs), and study several duality formulas for them by using the tools developed in our previous papers. Then we introduce the alternating version of the convoluted $T$-values and Kaneko-Tsumura $\psi$-function, which are proved to be closely related to the AMTVs. At the end of the paper, we study the $\Q$-vector space generated by the AMTVs of any fixed weight $w$ and provide some evidence for the conjecture that their dimensions $\{d_w\}_{w\ge 1}$ form the tribonacci sequence 1, 2, 4, 7, 13, ....

\medskip
\noindent{\bf Keywords}: multiple zeta values, Kaneko--Tsumura multiple $T$-values, alternating multiple $T$-values, weight sum formulas, duality, tribonacci sequence.

\medskip
\noindent{\bf AMS Subject Classifications (2020):}  11M06, 11M32, 11M35, 11G55, 11B39.


\section{Introduction}

\subsection{Multiple Zeta Values and Several Variations}
Recently, several variants of classical multiple zeta values of level 2 called \emph{multiple $t$-values} (abbr. MtVs), \emph{multiple $T$-values} (abbr. MTVs) and \emph{multiple mixed values} (abbr. MMVs)
were introduced and studied in Hoffman \cite{H2019}, Kaneko--Tsumura \cite{KTA2018,KTA2019} and Xu--Zhao \cite{XZ2020}, defined by
\begin{align*}
&t(k_1,\dotsc,k_r):=\sum_{0<m_1<\cdots<m_r\atop  2\nmid m_j   \ \forall j} \frac{1}{m_1^{k_1}\dotsm m_r^{k_r}},\\
&T(k_1,\dotsc,k_r):= \sum_{0<m_1<\cdots<m_r\atop m_j\equiv j\pmod{2}\ \forall j} \frac{2^r}{m_1^{k_1}\dotsm m_r^{k_r}},
\end{align*}
and
\begin{align*}
M(k_1,\dotsc,k_r;\varepsilon_1,\varepsilon_2,\dotsc,\varepsilon_r):&=\sum_{0<m_1<\cdots<m_r} \frac{(1+\varepsilon_1(-1)^{m_1}) \dotsm (1+\varepsilon_r(-1)^{m_r})}{m_1^{k_1} \cdots m_r^{k_r}} \\
&=\sum_{0<m_1<\cdots<m_r\atop 2| m_j\ \text{if}\ \varepsilon_j=1\ \text{and}\ 2\nmid m_j\ \text{if}\ \varepsilon_j=-1} \frac{2^r}{m_1^{k_1}\dotsm m_r^{k_r}},
\end{align*}
respectively. Here $k_1,\dotsc,k_r$ are positive integers with $k_r>1$ and $\varepsilon_1,\dotsc,\varepsilon_r\in\{\pm 1\}$. It is obvious that MtVs can be regarded as the special case of multiple Hurwitz zeta values. The classical \emph{multiple zeta values} (abbr. MZVs) are defined by (see \cite{H1992,DZ1994})
\begin{align*}
\zeta(k_1,\dotsc,k_r):=\sum\limits_{0<m_1<\dotsb<m_r } \frac{1}{m_1^{k_1}\dotsm m_r^{k_r}},
\end{align*}
for positive integers $k_1,\dotsc,k_r$ with $k_r>1$. We call $k_1+\dotsb+k_r$ and $r$ the \emph{weight} and \emph{depth}, respectively.

The systematic study of MZVs began in the early 1990s with the works of Hoffman \cite{H1992} and Zagier \cite{DZ1994}. Due to their surprising and sometimes mysterious appearance in the study of many branches of mathematics and theoretical physics, these special values have attracted a lot of attention and interest in the past three decades (for example, see the book by the
second author  \cite{Z2016}).
Clearly, MtVs and MTVs are special cases of MMVs. Moreover, it is obvious that MtVs satisfy the series stuffle relation, however, it is highly nontrivial to see that MTVs can be expressed using iterated integral and satisfy the integral shuffle relation (see \cite[Theorem~2.1]{KTA2019}). Further, in \cite{XZ2020}, we found that MMVs satisfy the series stuffle relations and integral shuffle relations.

The subject of this paper is the alternating MTVs (abbr. AMTVs) which are natural level two generalizations of MTVs. We shall derive a few weight sum formulas and a duality formula (see Theorem~\ref{thm:dualityAMTV}). The weight sum formula of MZVs
is widely known and its variants are enormous (see, for e.g., \cite{G2016,G1997,GL2015,H2017,LQ2019,M2013,SC2012} and \cite[Ch. 5]{Z2016}). For Hoffman's MtVs, a number of mathematicians also studied their weighed sum formulas in \cite{LX2020,SJ2017,Z2015}.

In contrast, we know very little about the weighed sum formulas for MTVs. For these values, certain formulas are found only in depths 2 and 3. For details, see Kaneko and Tsumura \cite[Thms. 3.2 and 3.3]{KTA2019}.
Recently, the first author has considered the following weighted sums of AMTVs
\begin{align}
W(k,r):=\sum_{k_1+\dotsb+k_r=k,\atop k_1,\dotsc,k_r\geq 1} T(k_1,\dotsc,k_{r-1},\ol{k_r+m-1}),
\end{align}
and proved two duality formulas for the weighted sums $W(k,r)$ (see \cite[Thms. 1.1 and 1.2]{CX2020}).
Here $T(k_1,\dotsc,k_{r-1}, \ol{k_r})$ stands for a kind of AMTV, which is defined for positive integers $k_1,\dotsc,k_r$ by
\begin{align}\label{S-MTV}
T(k_1,\dotsc,k_{r-1},\ol{k_r}):=2^r\sum\limits_{0<n_1<\cdots<n_r} \frac{(-1)^{n_r}}{(2n_1-1)^{k_1}\dotsm (2n_{r-1}-r+1)^{k_{r-1}}(2n_r-r)^{k_r}}.
\end{align}
Note that $T(k_1,\dotsc,k_{r-1},\ol{k_r})$ is also denoted by $\T(k_1,\dotsc,k_{r-1},k_r)$ in \cite{CX2020}.

More generally, for positive integers $k_1,\dotsc,k_r$ and $\sigma_1,\dotsc,\sigma_r\in \{\pm 1\}$ with $(k_r,\sigma_r)\neq (1,1)$, the general MTV is defined by
\begin{align}\label{Def.MTV}
T(k_1,\dotsc,k_r;\sigma_1,\dotsc,\sigma_r):=2^r\sum\limits_{0<n_1<\cdots<n_r} \frac{\sigma_1^{n_1}\sigma_2^{n_2}\cdots\sigma_r^{n_r}}{(2n_1-1)^{k_1} (2n_2-2)^{k_2}\cdots(2n_r-r)^{k_r}}.
\end{align}
We may compactly indicate the presence of an alternating sign as follows.
Whenever $\sigma_j=-1$,  we place a bar over the corresponding component $k_j$. For example,
\begin{equation*}
T(\bar 2,3,\bar 1,4)=T(2,3,1,4;-1,1,-1,1).
\end{equation*}
If one or more of the $\sigma_j$'s is $-1$ in \eqref{Def.MTV}, then the value is called an \emph{alternating multiple $T$-value} (abbr.  AMTV). In fact, for any  $\bfk=(k_1,k_2,\ldots,k_r)\in \N^r$,
$\bfeps=(\eps_1, \dots, \eps_r)\in\{\pm 1\}^r$, and $\bfsi=(\sigma_1, \dots, \sigma_r)\in\{\pm 1\}^r$ with $(k_r,\sigma_r)\neq (1,1)$, we can define the following more general MMVs
\begin{equation}\label{equ:AMMVdefn}
M_{\bfsi}(\bfk;\bfeps):=\sum_{0<m_1<\cdots<m_r} \frac{(1+\eps_1(-1)^{m_1})\sigma_1^{(2m_1+1-\eps_1)/4} \cdots (1+\eps_r(-1)^{m_r})\sigma_r^{(2m_r+1-\eps_r)/4}}{m_1^{k_1} \cdots m_r^{k_r}}.
\end{equation}
As usual, we call $k_1+\cdots+k_r$ and $r$ the \emph{weight} and \emph{depth}, respectively. If one or more of the $\sigma_j$'s is $-1$ in \eqref{equ:AMMVdefn}, then the value is called an \emph{alternating multiple mixed value} (or \emph{alternating multiple $M$-value}, abbr. AMMV). For convenience, we say the \emph{signature} of $k_j$ is even or odd depending on whether $\eps_j$ is 1 or $-1$. For brevity, we put a check on top of the component $k_j$ if $\eps_j=-1$. For example,
\begin{align*}
M_{\sigma_1,\sigma_2,\sigma_3}(k_1,k_2,\check{k_3})&=8\sum_{0<\ell<m<n} \frac{\sigma_1^\ell\sigma_2^m\sigma_3^n}{(2\ell)^{k_1}  (2m)^{k_2} (2n-1)^{k_3}},\\
M_{\sigma_1,\sigma_2,\sigma_3}(\check{{k_1}},k_2,\check{{k_3}})&=8\sum_{0<\ell<m<n} \frac{\sigma_1^\ell\sigma_2^{m-1}\sigma_3^{n-1}}{(2\ell-1)^{k_1}  (2m-2)^{k_2} (2n-3)^{k_3}}\\&=\sigma_2\sigma_3T(k_1,k_2,k_3;\sigma_1,\sigma_2,\sigma_3),\\
M_{\sigma_1,\sigma_2,\sigma_3}(\check{{k_1}},\check{k_2},\check{{k_3}})&=8\sum_{0<\ell<m<n} \frac{\sigma_1^\ell\sigma_2^{m}\sigma_3^{n}}{(2\ell-1)^{k_1}  (2m-1)^{k_2} (2n-1)^{k_3}}.
\end{align*}

Clearly, up to a sign every AMTV is an AMMV with the starting odd signature (i.e., smallest summation index in \eqref{equ:AMMVdefn} is odd). Further, the AMTVs and AMMVs can be roughly thought as special cases of colored MZVs of level four (the subtle point here is that we need slightly enlarge the set of coefficients), see \cite[Chapter 13-14]{Z2016}. In general, for any $(k_1,\dotsc, k_r)\in\N^r$ and $N$th roots of unity $\eta_1,\dotsc,\eta_r$, we can defined the colored MZVs as
\begin{equation*}
Li_{k_1,\dotsc, k_r}(\eta_1,\dotsc,\eta_r):=\sum\limits_{0<n_1<\cdots<n_r}
\frac{\eta_1^{n_1}\dots \eta_r^{n_r}}{n_1^{k_1} \dots n_r^{k_r}}
\end{equation*}
which converges if $(k_r,\eta_r)\ne (1,1)$ (see \cite[Ch. 15]{Z2016}), in which case we call $(\bfk,\bfeta)$ \emph{admissible}.

The level two colored MZVs are called alternating MZVs. In this case, namely,
when $(\eta_1,\dotsc,\eta_r)\in\{\pm 1\}^r$ and $(k_r,\eta_r)\ne (1,1)$, we set
$\zeta(\bfk;\bfeta)= Li_\bfk(\bfeta)$. Further, we put a bar on top of
$k_j$ if $\eta_j=-1$. For example,
\begin{equation*}
\zeta(\bar 2,3,\bar 1,4)=\zeta(  2,3,1,4;-1,1,-1,1).
\end{equation*}

It is clear that every non-alternating MMV (MtV or MTV) can be written as a $\Q$-linear combination of alternating MZVs.
Further, \eqref{S-MTV} is a special case of \eqref{Def.MTV} with $\sigma_1=\cdots=\sigma_{r-1}=1$ and $\sigma_r=-1$. In particular, if all $\sigma_j=1\ (j=1,2,\dotsc,r)$ in \eqref{Def.MTV}, then it becomes the original (Kaneko--Tsumura) multiple $T$-value, which was introduced and studied by Kaneko and Tsumura in \cite{KTA2018,KTA2019}.

\subsection{Main Results}
The main results of this paper concern some weight sum formula of AMTVs and
some duality relation for AMTVs. First, we extend the tools developed
in our previous papers \cite{CX2020} and \cite{XZ2020} to study the following weight
sum formula of the AMTVs,
\begin{align}\label{a-5}
W_l(k,r):=\sum_{k_1+k_2+\cdots+k_r=k,\atop k_1,\dotsc,k_r\geq 1} T(k_1,\dotsc,k_{l-1},\ol{k_l},k_{l+1},\dotsc,k_{r-1},\ol{k_r}),
\end{align}
where $0\leq l\leq r$. Obviously, $W_0(k,r)=W_r(k,r)=W(k,r)$.
Then we prove the following two theorems concerning the duality of $W_l(k,r)$.
Let $\N$ be the set of positive integers and $\N_0:=\N\cup\{0\}$.

\begin{thm}\label{thm1} For any $m,p\in\N$ and $l\in\N_0$, we have
\begin{equation}\label{r1}
(-1)^m \sum_{j=1}^p \alpha_{p-j} W_l(2j+2m+l-1,2m+l)
=(-1)^p \sum_{j=1}^m \alpha_{m-j} W_l(2j+2p+l-1,2p+l),
\end{equation}
where $\alpha_n=(-1)^n \pi^{2n}/(2n+1)!$.
\end{thm}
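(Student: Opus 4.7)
The plan is to extend the generating-function method of \cite{CX2020} to accommodate the interior bar at position $l$ in $W_l(k,r)$. Starting from the iterated integral representation of $T(k_1,\dotsc,\ol{k_l},\dotsc,\ol{k_r})$ in the spirit of Kaneko--Tsumura and our previous work \cite{XZ2020}, and using the identity $\sum_{k\geq 1}x^{k-1}/a^k=1/(a-x)$, we sum over all compositions of $k$ into $r$ positive parts. This collapses the $dt/t$ forms at interior slots and yields a compact one-parameter iterated integral
\begin{equation*}
F_{l,r}(x):=\sum_{k\geq r}W_l(k,r)\,x^{k-r},
\end{equation*}
in which the two alternating signs at the slots $l$ and $r$ are encoded by the choice of differential forms at those positions.

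The next observation is that $\sum_{n\geq 0}\alpha_n x^{2n}=\sin(\pi x)/(\pi x)$, so the left-hand side of \eqref{r1} equals $(-1)^m$ times the coefficient of $x^{2p-1}$ in $\frac{\sin(\pi x)}{\pi x}\cdot F_{l,2m+l}(x)$, while the right-hand side equals $(-1)^p$ times the coefficient of $x^{2m-1}$ in $\frac{\sin(\pi x)}{\pi x}\cdot F_{l,2p+l}(x)$. Consequently, \eqref{r1} is equivalent to the $(u,v)$-symmetry of the double generating function
\begin{equation*}
\mathcal H_l(u,v):=\frac{\sin(\pi v)}{\pi v}\sum_{m\geq 1}(-1)^m u^{2m-1}F_{l,2m+l}(v),
\end{equation*}
that is, to the identity $\mathcal H_l(u,v)=\mathcal H_l(v,u)$.

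To establish this symmetry we reinterpret $\mathcal H_l(u,v)$ as an explicit two-variable iterated integral: the sum over $m$ merges adjacent pairs of forms into factors controlled by $u$, while the factor $\sin(\pi v)/(\pi v)$ arises naturally from the half-angle identity applied to the forms $dt/(1-t^2)$ underlying barred $T$-values. The most likely device for producing a manifestly symmetric shape is the involution $t\mapsto(1-t)/(1+t)$, the classical substitution that drives the duality of Kaneko--Tsumura $T$-values; once the integral is brought into a form symmetric in $u$ and $v$, comparison of coefficients of $u^{2m-1}v^{2p-1}$ yields \eqref{r1}. The main obstacle is precisely this last step: when $1\leq l\leq r-1$, the interior bar breaks the palindromic structure of the iterated integral, and exhibiting the symmetry will likely require an auxiliary parameter or a careful integration by parts at the $l$-th slot to realign the two barred positions. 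The boundary cases $l=0$ and $l=r$ serve as a consistency check, since both reduce to the duality of $W(k,r)$ of \cite{CX2020}; the prefactors $(-1)^m$ and $(-1)^p$ in \eqref{r1} should emerge as the signs accumulated during these substitutions.
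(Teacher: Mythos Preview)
Your proposal outlines a plausible strategy but does not actually prove the theorem: you reduce \eqref{r1} to the symmetry $\mathcal H_l(u,v)=\mathcal H_l(v,u)$ of a double generating function, and then explicitly concede that you do not know how to establish this symmetry when $1\le l\le r-1$ (``the interior bar breaks the palindromic structure \dots\ exhibiting the symmetry will likely require an auxiliary parameter or a careful integration by parts''). That is the whole content of the theorem, so the argument has a genuine gap at the crucial step. The substitution $t\mapsto(1-t)/(1+t)$ interchanges $\omega_0$ and $\omega_1$ while fixing $\omega_{-1}$; applied to the iterated integral for $W_l$ it does \emph{not} interchange the roles of $m$ and $p$ --- instead it produces a different AMTV (this is the duality of Theorem~\ref{thm:dualityAMTV}), so your hoped-for device does not directly yield the desired symmetry.

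The paper takes a different and more concrete route. Starting from the integral representation \eqref{b8} and the evaluations of $\int_0^1 t^{2n-c}\log^{k-1}\!\big(\tfrac{1-t}{1+t}\big)\,dt$ in Lemma~\ref{lem1}, one obtains the closed-form \eqref{b20}
\[
W_l(2p+2m+l-1,2m+l)=2(-1)^m\sum_{j=0}^{p-1}\bar\zeta(2p-2-2j)\,T\big((\{1\}_{2m-1},l)\bar\circledast(\{1\}_{2j+2})\big).
\]
Inverting this triangular system via Lemma~\ref{lemc1} (the inversion coefficients are precisely $\alpha_{p-j}$ by \cite[Prop.~5.2]{XZ2020}) gives \eqref{c3}:
\[
(-1)^m\sum_{j=1}^p\alpha_{p-j}\,W_l(2j+2m+l-1,2m+l)
=2\sum_{n=1}^\infty\frac{T_n(\{1\}_{2m-1})\,T_n(\{1\}_{2p-1})}{(2n)^{l+1}}(-1)^n.
\]
The series on the right is \emph{manifestly} symmetric under $m\leftrightarrow p$, and \eqref{r1} follows immediately. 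In other words, the symmetry you were seeking in generating-function form is realised concretely as the obvious $m\leftrightarrow p$ invariance of the alternating convoluted $T$-value $T\big((\{1\}_{2m-1},l)\bar\circledast(\{1\}_{2p})\big)$; once that identification is made, no further manipulation is needed.
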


\begin{thm}\label{thm2} For any $m,p\in\N$ and $l\in\N_0$,, we have
\begin{equation}\label{r2}
(-1)^m \sum_{j=1}^p \alpha_{p-j}W_l(2j+2m+l-3,2m+l-1) =(-1)^p \sum_{j=1}^m \alpha_{m-j} W_l(2j+2p+l-3,2p+l-1).
\end{equation}
\end{thm}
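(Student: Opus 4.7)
The plan is to mirror the proof of Theorem~\ref{thm1}, using the machinery of iterated integrals and the involution $t\mapsto (1-t)/(1+t)$ developed in \cite{CX2020} and \cite{XZ2020}. The key distinction from Theorem~\ref{thm1} is a parity shift: in \eqref{r1} one has weight $-$ depth $=2j-1$, whereas in \eqref{r2} one has weight $-$ depth $=2j-2$. This shift will correspond, at the level of integral representations, to inserting one fewer $dt/t$-form in the integrand of the summand, and it is ultimately what replaces the ``$-1$'' indices of Theorem~\ref{thm1} by the ``$-3$'' indices of Theorem~\ref{thm2}.

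First, I would express each AMTV $T(k_1,\ldots,\ol{k_l},\ldots,\ol{k_r})$ appearing in $W_l(k,r)$ as an iterated integral in the Kaneko--Tsumura one-forms at level two, with the sign-alternating forms placed at the barred positions $l$ and $r$. Summing over all compositions $k_1+\cdots+k_r=k$ with $k_i\ge 1$ then collapses the inner iterated structure via geometric series into a closed-form multi-variable integral whose coefficient in a suitable $k$-expansion recovers $W_l(k,r)$.

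Second, since $\alpha_n=(-1)^n\pi^{2n}/(2n+1)!$ are precisely the Taylor coefficients of $\sin(\pi z)/(\pi z)$, I would encode \eqref{r2} as a coefficient identity for a two-variable generating function of the schematic form
\begin{equation*}
\Phi_l(x,y)=\int \frac{\sin(\pi x\,\tau)}{\pi x\,\tau}\cdot\frac{\sin(\pi y\,\sigma)}{\pi y\,\sigma}\,K_l(\tau,\sigma)\,d\tau\,d\sigma,
\end{equation*}
where $K_l$ is the kernel arising from the weight-collapse in the previous step. The identity \eqref{r2} then becomes equivalent to the symmetry $\Phi_l(x,y)=\Phi_l(y,x)$, up to the overall sign $(-1)^{p+m}$, which would follow by applying the involution $t\mapsto (1-t)/(1+t)$ to an outer integration variable and rearranging.

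The main technical obstacle will be identifying $K_l$ explicitly and verifying that it is indeed symmetric under the relevant substitution. This requires careful sign bookkeeping for the two barred positions at $l$ and $r$, and tracking the parity change from Theorem~\ref{thm1} to Theorem~\ref{thm2} through every step of the integral reduction, in particular showing that removing one $dt/t$-form in the kernel preserves the $(x,y)$-symmetry. Once symmetry of $\Phi_l$ is established, reading off the coefficient of $x^{2p}y^{2m}$ on both sides yields \eqref{r2}.
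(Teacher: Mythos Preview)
Your proposal remains at the level of a plan: the decisive step --- identifying the kernel $K_l$ and verifying its symmetry under the involution --- is exactly what you label ``the main technical obstacle,'' and you do not carry it out. Without an explicit $K_l$ there is no proof here, and it is not clear that the substitution $t\mapsto(1-t)/(1+t)$ alone produces the required $(x,y)$-symmetry: that involution exchanges the forms $\om_0$ and $\om_1$ while fixing $\om_{-1}$, which relates different AMTVs by the duality of Theorem~\ref{thm:dualityAMTV} rather than rendering a single two-parameter kernel symmetric.

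The paper's argument is both different and far more direct. Starting from the integral representation \eqref{b8} of $W_l$, one expands into a series and applies the closed-form evaluations of Lemma~\ref{lem1} to obtain \eqref{b23}, expressing $W_l(2p+2m+l-3,2m+l-1)$ as a finite $\ol\zeta$-weighted sum of alternating convoluted $T$-values. The triangular inversion of Lemma~\ref{lemc1} then yields \eqref{c4}:
\[
(-1)^{m-1}\sum_{j=1}^p \alpha_{p-j}\,W_l(2j+2m+l-3,2m+l-1)
=2\sum_{n=1}^\infty \frac{T_n(\{1\}_{2m-2})\,T_n(\{1\}_{2p-2})}{(2n-1)^{l+1}}(-1)^n.
\]
The series on the right is \emph{manifestly} symmetric under $m\leftrightarrow p$, so swapping $m$ and $p$ gives \eqref{r2} at once. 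No two-variable generating function and no outer change of variables is needed: the symmetry lives in the product $T_n(\{1\}_{2m-2})T_n(\{1\}_{2p-2})$, not in an involution on an integral. If you want to salvage your approach, the honest target is precisely identity \eqref{c4}; once that is in hand, the generating-function packaging is superfluous.
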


Clearly, if $l=0$, then the Theorems \ref{thm1} and \ref{thm2} above degenerate to \cite[Theorem~1.1]{CX2020} and \cite[Thm.~1.2]{CX2020}, respectively.

Next, for $\bfk=(k_1,\dotsc,k_r)\in \N^r$ we often write $\bfk_r=\bfk$,
define its weight $|\bfk|:=k_1+k_2+\cdots+k_r$ and adopt the following notation:
\begin{alignat*}{3}
&\ora{\bfk_j}:=(k_1,k_2,\ldots ,k_j),\quad && \overleftarrow{\bfk_j}:=(k_r,k_{r-1},\dotsc,k_{r+1-j}),\\
|&\ora{\bfk_j}|:=k_1+k_2+\cdots+k_j,\quad  & |&\overleftarrow{\bfk_j}|:=k_r+k_{r-1}+\cdots+k_{r+1-j}
\end{alignat*}
with $\ora{\bfk_0}=\overleftarrow{\bfk_0}:=\emptyset$ and $|\ora{\bfk_0}|=|\overleftarrow{\bfk_0}|:=0$.

Another purpose of present paper is to prove the following a more general duality relation for general weighted sums involving AMTVs and binomial coefficients by using the method of iterated integrals.
\begin{thm}\label{thm3} For positive integers $k_1,\dotsc,k_r$ and integers $m,p\geq 0$,
\begin{align}\label{a14}
&\sum_{j_1+j_2+\cdots+j_{|\bfk|+1}=m+1+|\bfk|,\atop j_\ell\geq 1 \forall \ell} \binom{j_{|\bfk|+1}+p-1}{p} \nonumber\\&\quad\quad\quad\quad\times T\left(\Cat_{\substack{l=1}}^r \left\{ \{\ol{j_{|\overleftarrow{\bfk}_{l-1}|+1}}\}\diamond\{j_{|\overleftarrow{\bfk}_{l-1}|+2},,\dotsc,j_{|\overleftarrow{\bfk}_{l}|-1}\}\diamond\{\ol{j_{|\overleftarrow{\bfk}_l|}}\}\right\},\ol{j_{|\bfk|+1}+p} \right)\nonumber\\
&+(-1)^{|\bfk|+1} \sum_{j_1+j_2+\cdots+j_{|\bfk|+1}=p+1+|\bfk|,\atop j_\ell\geq 1 \forall \ell} \binom{j_{|\bfk|+1}+m-1}{m} \nonumber\\&\quad\quad\quad\quad\quad\quad\quad\times T\left(\Cat_{\substack{l=1}}^r \left\{ \{\ol{j_{|\ora{\bfk}_{l-1}|+1}}\}\diamond\{j_{|\ora{k}_{l-1}|+2},\dotsc,j_{|\ora{\bfk}_{l}|-1}\}\diamond\{\ol{j_{|\ora{\bfk}_l|}}\}\right\},\ol{j_{|\bfk|+1}+m} \right)\nonumber\\
&=\sum_{l=1}^r (-1)^{|\overleftarrow\bfk_{r-l}|} \sum_{j=1}^{k_l} (-1)^{j-1} T\left(\{1\}_{m-1},\ol{1},\overleftarrow\bfk_{r-l},\ol{j} \right)T\left(\{1\}_{p-1},\ol{1},\ora\bfk_{l-1},{\ol{k_l-j+1}} \right),
\end{align}
where $(\{1\}_{-1},1):=\emptyset$, $\{l\}_m$ denotes the sequence obtained by repeating $l$ exactly $m$ times,
\begin{align*}
(\ol{a_1}\diamond\{a_2,\dotsc,a_{r-1} \}\diamond \ol{a_r}):= \left\{ {\begin{array}{*{20}{c}} \left(\bar a_1,a_2,\dotsc,a_{r-1},\bar a_r\right),
   {\ \ r\geq 2,}  \\
   \quad\quad\quad\quad (a_1), {\ \ \ \ \ \quad\quad r = 1},  \\
\end{array} } \right.
\end{align*}
and $\Cat_{\substack{l=1}}^r \{\bfs(l)\}$ denotes the concatenated sequence $(\bfs(1),\bfs(2),\dotsc,\bfs(r)).$ If $r=0$, then $ \Cat_{\substack{l=1}}^0 \{\bfs(l)\}:=\emptyset$.
\end{thm}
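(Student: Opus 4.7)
The plan is to prove Theorem \ref{thm3} by constructing an auxiliary iterated integral whose two different evaluations yield the left- and right-hand sides of \eqref{a14}. This follows the same philosophy as \cite[Thms.~1.1, 1.2]{CX2020}, whose strategy must be extended to handle an arbitrary internal parameter $\bfk$ rather than just a weighted tail.

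First, I would fix the iterated integral representations for the AMTVs appearing in the identity. Building on the Kaneko--Tsumura representation of MTVs via the forms $\{dt/t,\, 2dt/(1-t^2)\}$ on $(0,1)$ (or equivalently on $(0,1/\sqrt 2)$ after a tangent substitution) and its alternating extension used in \cite{XZ2020}, each occurrence of $\ol{k_j}$ will correspond to an additional $2dt/(1+t^2)$-type form sitting at the ``end'' of the corresponding block of $k_j$ forms. In particular, the prefix strings $\{1\}_{m-1},\ol 1$ and $\{1\}_{p-1},\ol 1$ on the right-hand side are integrals of $m-1$ (resp.\ $p-1$) copies of the non-alternating form followed by one alternating form, which is exactly the shape that appears when one integrates a function of the type $(1\pm t)^{-m-1}$ against the simplex.

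Second, I would introduce a two-parameter iterated integral $\Phi_{m,p}(\bfk)$ whose integrand contains binomial generating factors of the form $(1-u)^{-m-1}$ and $(1-v)^{-p-1}$ glued at opposite ends of a fixed interior ``word'' coding $\bfk$. Expanding $(1-u)^{-p-1}=\sum_{n\ge 0}\binom{n+p}{p}u^n$ (and similarly in $v$) and reading off the resulting nested series in the two possible orderings produces, respectively, the first and the second sum on the left-hand side of \eqref{a14}: the forward expansion yields the $\ora{\bfk}_{\,\cdot}$ pattern, the reversed expansion yields the $\overleftarrow{\bfk}_{\,\cdot}$ pattern, and the sign $(-1)^{|\bfk|+1}$ reflects the reversal of the iterated integral path (equivalently, an application of the antipode / $x\mapsto 1-x$ substitution on the simplex). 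The constraint $j_\ell\ge 1\,\forall\ell$ with the top-most index acquiring the binomial weight tracks precisely the filtration coming from the Taylor expansion.

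Third, to obtain the right-hand side, I would apply the iterated integration-by-parts identity (the Chen--Ree integration by parts / boundary decomposition) to $\Phi_{m,p}(\bfk)$: breaking the integration simplex at a variable ``cut point'' inside the block corresponding to $k_l$ and then summing over $l$ and over the intra-block position $j\in\{1,\dotsc,k_l\}$ produces exactly the double sum $\sum_l\sum_j$ on the right, with the sign $(-1)^{|\overleftarrow\bfk_{r-l}|}$ accounting for the parity of alternating forms sitting to the right of the cut, and the factor $(-1)^{j-1}$ coming from the parity within the block itself. The two $T$-factors then assemble as the integrals over the two halves of the simplex, each one being precisely an AMTV with the claimed bar pattern.

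The main obstacle will be the bookkeeping: one must verify that the bars in $\Cat_{l=1}^{r}\{\ol{j_{\cdot}}\diamond\{\cdots\}\diamond\{\ol{j_{\cdot}}\}\}$ fall exactly at the block boundaries (the positions $|\overleftarrow\bfk_{l-1}|+1$ and $|\overleftarrow\bfk_{l}|$, and their ``forward'' analogues), and that all the signs coming from (i) path reversal, (ii) choice of alternating vs.\ non-alternating 1-form, and (iii) the intra-block cut position, combine to reproduce $(-1)^{|\bfk|+1}$, $(-1)^{|\overleftarrow\bfk_{r-l}|}$, and $(-1)^{j-1}$ respectively. Beyond this combinatorial check, the degenerate cases $m=0$, $p=0$, $r=0$ (and the convention $(\{1\}_{-1},1):=\emptyset$) should be verified directly against the integral identity so that no boundary term is lost.
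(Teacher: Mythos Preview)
Your overall strategy is correct and matches the paper's: introduce an auxiliary iterated integral over the simplex with special factors at both ends, identify it with the weighted binomial sums on the left of \eqref{a14}, and then apply Chen's integration-by-parts identity (your ``Chen--Ree boundary decomposition'', the paper's Lemma~\ref{lem4.1}) to produce the product decomposition on the right. The signs $(-1)^{|\bfk|+1}$, $(-1)^{|\overleftarrow\bfk_{r-l}|}$, $(-1)^{j-1}$ arise exactly as you describe, from path reversal and the parity of the cut position.

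One point needs correcting. The end factors in the paper's auxiliary integral are not of the type $(1-u)^{-m-1}$; they are
\[
\frac{\log^m\!\left(\frac{1-t}{1+t}\right)}{1+t^2}\,dt
\quad\text{and}\quad
\frac{\log^p\!\left(\frac{1-t}{1+t}\right)}{1+t^2}\,dt
\]
at the two extremes, with the word $\omega_{-1}\omega_0^{k_1-1}\cdots\omega_{-1}\omega_0^{k_r-1}$ in between (see \eqref{b9}). The connection to the AMTV prefix $(\{1\}_{p-1},\ol 1,\dots)$ is not via a geometric-series expansion of $(1-u)^{-p-1}$, but via the identity $\log^p\!\big(\tfrac{1-x}{1+x}\big)=(-1)^p p!\,{\rm A}(\{1\}_p;x)$ (equation \eqref{d3}), which is what makes \eqref{d2} hold. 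Likewise, the binomial coefficients on the left of \eqref{a14} do not come from Taylor-expanding an $(1-u)^{-m-1}$ factor; they come from the summation identity \eqref{b5} (specialized as in \eqref{b9}), which collapses the multi-index sum over $j_1,\dots,j_{|\bfk|+1}$ with the binomial weight into exactly this $\log^m\cdot\log^p$ integral. Once you replace your generating-factor description by the $\log^m$/$\log^p$ integral of \eqref{b9}, the rest of your outline---apply Lemma~\ref{lem4.1}, then read each piece via \eqref{d2}---is precisely the paper's proof (assembled as Theorem~\ref{thm:d4} and then substituted back).
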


Setting $k_1=k_2=\cdots=k_r=1$ in Theorem \ref{thm3}, we obtain \cite[Theorem~1.3]{CX2020}. In particular, setting $\bfk=k\in\N$ and $m=0$ in Theorem \ref{thm3} we easily deduce the following corollary.
\begin{cor}\label{cor1.4} For positive integer $k\ge 2$ and integer $p\geq 0$,
\begin{align}\label{a15}
&\sum_{j_1+j_2+\cdots+j_{k+1}=p+k+1,\atop j_\ell\geq 1 \forall \ell} T({\ol {j_1}},j_2,\dotsc,j_{k-1},{\ol {j_k}},{\ol {j_{k+1}}})\nonumber\\
&=(-1)^k T(\bar 1,\{1\}_{k-2},\bar 1, \ol{p+1})+(-1)^k \sum_{j=1}^k (-1)^j T(\bar j)T(\{1\}_{p-1},\bar 1,\ol{k-j+1}),
\end{align}
where $(\{1\}_{-1},\bar 1):=\emptyset$.
\end{cor}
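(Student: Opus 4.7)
The plan is to derive Corollary~\ref{cor1.4} as a direct specialization of Theorem~\ref{thm3}, taking the depth-one case $r=1$, $\bfk=(k)$, and weight parameter $m=0$. No new ideas enter: the task is purely to unpack the heavy notation of Theorem~\ref{thm3}---the reverse and forward subvectors $\overleftarrow\bfk_j$, $\ora\bfk_j$, the diamond operator $\diamond$, and the outer concatenation $\Cat$---and to verify that the sign bookkeeping is consistent. I expect the only obstacle to be notational: one must correctly apply the diamond rule to the single inner string (of length $k$, not the outer $r=1$) so that no spurious components appear.

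First I would record the auxiliary data associated to $\bfk=(k)$: one has $\ora\bfk_0=\overleftarrow\bfk_0=\emptyset$ and $\ora\bfk_1=\overleftarrow\bfk_1=(k)$, hence $|\overleftarrow\bfk_0|=|\ora\bfk_0|=0$ and $|\overleftarrow\bfk_1|=|\ora\bfk_1|=k$. In the first sum on the left-hand side of \eqref{a14} the index condition $j_1+\cdots+j_{k+1}=0+1+k=k+1$ together with $j_\ell\ge 1$ forces $j_1=\cdots=j_{k+1}=1$, so the sum collapses to a single term. The binomial coefficient becomes $\binom{p}{p}=1$, and unfolding the diamond-concatenation (using $k\ge 2$) yields exactly $T(\bar 1,\{1\}_{k-2},\bar 1,\ol{p+1})$.

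Next I would simplify the second sum on the left of \eqref{a14}: since $m=0$ the binomial coefficient $\binom{j_{k+1}-1}{0}$ equals $1$, and the diamond-concatenation turns the argument of $T$ into $(\ol{j_1},j_2,\dotsc,j_{k-1},\ol{j_k},\ol{j_{k+1}})$, reproducing the weighted sum on the left of \eqref{a15} up to the overall factor $(-1)^{k+1}$. On the right-hand side of \eqref{a14} only the index $l=1$ contributes, the sign $(-1)^{|\overleftarrow\bfk_0|}$ is $+1$, and the convention $(\{1\}_{-1},\bar 1):=\emptyset$ (together with $\overleftarrow\bfk_0=\ora\bfk_0=\emptyset$) reduces the first factor on the right to $T(\ol j)$.

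Putting everything together, Theorem~\ref{thm3} specialized this way reads
\[
T(\bar 1,\{1\}_{k-2},\bar 1,\ol{p+1})+(-1)^{k+1}S\;=\;\sum_{j=1}^{k}(-1)^{j-1}T(\ol j)\,T(\{1\}_{p-1},\bar 1,\ol{k-j+1}),
\]
where $S$ denotes the sum on the left of \eqref{a15}. Multiplying through by $(-1)^{k+1}$ and using $(-1)^{k+2}=(-1)^{k}$ together with $(-1)^{k+j}=(-1)^{k}(-1)^{j}$ produces \eqref{a15} verbatim, which completes the derivation.
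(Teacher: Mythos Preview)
Your proposal is correct and follows exactly the approach indicated in the paper, which simply states that the corollary is obtained by ``setting $\bfk=k\in\N$ and $m=0$ in Theorem~\ref{thm3}''; you have carefully unpacked the notation and sign bookkeeping that the paper leaves implicit. The only cosmetic point is that your final step is really ``rearrange, then multiply by $(-1)^{k+1}$'' rather than a single multiplication, which is where your factor $(-1)^{k+2}$ arises---but this is trivial and the derivation is sound.
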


We will prove Theorems \ref{thm1}-\ref{thm3} in the Sections \ref{sec3} and \ref{sec4}. At the end of section~\ref{sec4}, we present Conjecture~\ref{conj:parity}, a parity conjecture on the MMVs which cannot be deduced from the well-known parity principle for multiple polylogarithms proved by Panzer \cite{Panzer2017}.

In the last section, we investigate the structure of the $\Q$-vector space $\AMTV_w$ generated by all AMTVs of weight $w$.
\begin{con} \label{conj:tribonacci} \emph{(=Conjecture~\ref{conj:tribonacci2})}
Let $d_w=\dim_\Q \AMTV_w$ for all $w\ge 1$. Set $d_0=1$. Then
\begin{equation*}
    \sum_{w=0}^\infty d_w t^w = \frac{1}{1-t-t^2-t^3}.
\end{equation*}
Namely, the dimensions form the tribonacci sequence $\{d_w\}_{w\ge 1}=\{1,2,4,7,13,24,\dotsc\}$.
\end{con}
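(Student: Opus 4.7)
Since the statement is marked as a conjecture and the paper promises only ``some evidence,'' my plan is to combine numerical experiment with structural upper-bound work rather than pursue a complete proof. The first step is enumeration and computation: for each small weight $w$ (say $1\le w\le 8$) I would list the finite set of admissible pairs $(\bfk,\bfsi)$ with $|\bfk|=w$ and $(k_r,\sigma_r)\ne (1,1)$, translate each AMTV into an explicit $\Q$-linear combination of level-four colored MZVs via the recipe sketched after \eqref{Def.MTV}, and evaluate every $T(\bfk;\bfsi)$ to roughly $200$-digit precision through accelerated series summation or the iterated-integral representation inherited from \cite{XZ2020}.

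The next step is a PSLQ/LLL search, weight by weight, for all $\Q$-linear relations among the computed values; this produces experimental dimensions $d_w$ which can be compared against the tribonacci sequence $1,2,4,7,13,24,\dots$. Many of the detected relations should be explainable a priori, and my plan is to verify that they follow from the stuffle product and integral shuffle for AMMVs (proved in \cite{XZ2020}), the weighted-sum dualities of Theorems~\ref{thm1}--\ref{thm2}, and the binomial duality of Theorem~\ref{thm3}. Checking that these structural relations already cut the natural generating set down to tribonacci size in the accessible weights constitutes the principal ``upper bound'' component of the evidence.

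To make the tribonacci formula conceptually plausible I would then exhibit an explicit candidate spanning set of cardinality $d_w$. Since the pure Kaneko--Tsumura MTVs form a subspace of $\AMTV_w$ and are themselves conjectured to satisfy the tribonacci dimension formula, the present conjecture is essentially equivalent to the claim $\AMTV_w=\mathsf{MTV}_w$. A natural candidate spanning set is therefore $\{T(\bfk)\colon k_i\in\{1,2,3\}\}$, whose cardinality obeys the recursion $c_w=c_{w-1}+c_{w-2}+c_{w-3}$ with the correct initial conditions. Concretely reducing every AMTV (including those with one or more bars) to this set in low weights, via the relations above, would furnish forceful structural evidence.

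The decisive obstacle is, as always for period-space dimension conjectures, the lower bound: proving $\Q$-linear independence of the candidate basis would require transcendence results strictly beyond anything currently available, even for classical MZVs. Consequently, this half of the conjecture must remain supported solely by PSLQ experiments, and the quantitative content of the final section will be a table of experimental $d_w$ together with explicit reductions confirming the matching upper bounds in the accessible range.
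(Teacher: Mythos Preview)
Your proposal contains a substantive error in the structural part. The claim that the non-alternating Kaneko--Tsumura MTVs are ``themselves conjectured to satisfy the tribonacci dimension formula'' is incorrect: the Kaneko--Tsumura conjecture for $\mathsf{MTV}_w$ is a Fibonacci-type count, not tribonacci, and already in weight~$2$ one has $\dim_\Q\mathsf{MTV}_2=1$ (spanned by $T(2)=\pi^2/4$) while $\dim_\Q\AMTV_2=2$ (spanned for example by $T(1,\bar1)=\pi^2/8$ and $T(\bar2)=-2G$). Hence $\mathsf{MTV}_w\subsetneq\AMTV_w$ in general, and the conjecture is \emph{not} equivalent to $\AMTV_w=\mathsf{MTV}_w$. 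Relatedly, your candidate spanning set $\{T(\bfk):k_i\in\{1,2,3\}\}$ with no bars is not even well defined: it contains divergent quantities such as $T(\{1\}_w)$, since admissibility for non-alternating MTVs requires $k_r\ge2$. The paper's candidate bases do have indices in $\{1,2,3\}^r$, but each carries a specific choice of alternating signs $(\sigma_1,\dotsc,\sigma_r)$; the tribonacci count reflects the compositions, while the signs are what make every element convergent and the set genuinely span $\AMTV_w$.

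The methodology also diverges from the paper's. For the upper bound the paper does not rely on stuffle, shuffle, or Theorems~\ref{thm1}--\ref{thm3}; instead it embeds each $T(\bfk;\bfsi)$ into $\CMZV_{4,w}\cup i\,\CMZV_{4,w}$ and then expresses it \emph{exactly} in Deligne's spanning set $\bfB_w=\{(2\pi i)^p Li_{\bfk}(i,1,\dotsc,1)\}$ (using Au's package for $w=5$), which gives rigorous upper bounds for $w\le5$. The duality relation of Theorem~\ref{thm:dualityAMTV} (not Theorems~\ref{thm1}--\ref{thm3}) is used to roughly halve the number of values to compute. The lower bound then comes not from raw PSLQ on AMTV values but from the conditional linear independence of Deligne's basis under Grothendieck's period conjecture. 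Only at $w=6$ does the paper resort to PSLQ, and there $1500$ digits are needed; your proposed $200$ digits would be far too few.
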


Using Deligne's theorem \cite[Theorem~6.1]{Deligne2010} on colored MVZs of level four (i.e., special values of multiple polylogarithms at fourth roots of unity) we are able to verify Conjecture \ref{conj:tribonacci} rigorously for $w\le 5$ assuming Grothendieck's period conjecture and numerically for $w=6$ by expressing every AMTV in terms of Deligne's basis \eqref{equ:CMZVbasis} using the integer relation detecting algorithm PSLQ \cite{BaileyBr2001}. Here, in order to express every AMTV in terms of Deligne's basis \eqref{equ:CMZVbasis} we need to keep 1500 digits of precision in MAPLE. Furthermore, in every weight $w\le 6$ case we found a basis consisting of elements of the form $T(k_1,\dotsc,k_r;\sigma_1,\dotsc,\sigma_r)$ with a suitable choice of the alternating signs $(\sigma_1,\dotsc,\sigma_r)\in\{\pm 1\}^r$ for each choice of $(k_1,\dots,k_r)\in\{1,2,3\}^r.$ It would be very interesting to determine the exact pattern of this basis in general.

\section{Iterated Integral Expressions}\label{sec2}

In this section, we define a variant of Kaneko--Tsumura {\rm A}-function with $r$-variables, and find some iterated integral expressions of it and AMTVs.

\subsection{Multiple Polylogarithm of Level Two with $r$-Variables}

Kaneko and Tsumura \cite{KTA2018,KTA2019} introduced the following a new kind of multiple polylogarithm functions of level two
\begin{align}\label{b1}
{\rm A}(k_1,\dotsc,k_r;x): &= 2^r\sum\limits_{0 < {n_1} <  \cdots  < {n_r}\atop n_j\equiv j\ {\rm mod}\ 2} {\frac{{{x^{{n_r}}}}}{{n_1^{{k_1}}n_2^{{k_2}} \cdots n_r^{{k_r}}}}}\quad (|x|\leq 1)\nonumber\\
&=2^r \sum\limits_{1 < {m_1} <  \cdots  < {m_r}}  {\frac{{{x^{{2m_r-r}}}}}{{(2m_1-1)^{{k_1}}(2m_2-2)^{{k_2}} \dotsm (2m_r-r)^{{k_r}}}}},
\end{align}
where $k_1,\dotsc,k_r$ are positive integers and $(k_r,x)\neq (1,1)$. We call them Kaneko--Tsumura {\rm A}-functions. Note that $2^{-r}{\rm A}(k_1,\dotsc,k_r;x)$ is denoted by ${\rm Ath}(k_1,\dotsc,k_r;x)$ in \cite{KTA2018}. Clearly, if $k_r>1$ and $x=1$,
then \eqref{b1} are exactly the MTVs. If $x=\mathrm{i}:=\sqrt{-1}$, then
\begin{align*}
{T}(k_1,\dotsc,k_{r-1},\ol{k}_r)=\mathrm{i}^r {\rm A}(k_1,\dotsc,k_r;\mathrm{i}).
\end{align*}
Similarly, we define a variant of \eqref{b1} with $r$-variables by
\begin{align}\label{b2}
&{\rm A}(k_1,\dotsc,k_r;x_1,x_2,\dotsc,x_r)\nonumber\\
&:=2^r \sum\limits_{1 < {m_1} <  \dotsb  < {m_r}}  {\frac{\left(\frac{x_1}{x_2}\right)^{2m_1-1}\left(\frac{x_2}{x_3}\right)^{2m_2-2}\dotsm \left(\frac{x_{r-1}}{x_r}\right)^{2m_{r-1}-r+1}x_r^{{2m_r-r}}}{{(2m_1-1)^{{k_1}}(2m_2-2)^{{k_2}} \dotsm (2m_r-r)^{{k_r}}}}}
\end{align}
for $x_1,\dotsc,x_r\in \mathbb{C}$ with $|x_j|\leq 1$.

Next, we will establish the iterated integral expressions of ${\rm A}(k_1,\dotsc,k_r;x_1,x_2,\dotsc,x_r)$ and of the weighed sums involving AMTVs. By an elementary calculation, we find an iterated integral expression of \eqref{b2} as follows:
\begin{align}\label{b3}
&{\rm A}(k_1,\dotsc,k_r;x_1,x_2,\dotsc,x_r)=\int_0^1 \frac{2x_1dt}{1-x_1^2t^2}\left(\frac{dt}{t}\right)^{k_1-1} \dotsm \frac{2x_{r}\, dt}{1-x_r^2t^2} \left(\frac{dt}{t}\right)^{k_r-1}\nonumber\\
&=2^r\left\{\prod\limits_{j=1}^r\frac{(-1)^{k_j-1}}{\Gamma(k_j)}x_j\right\}\int\nolimits_{D_r} \frac{\log^{k_1-1}\left(\frac{t_1}{t_2}\right)\cdots \log^{k_{r-1}-1}\left(\frac{t_{r-1}}{t_r}\right)\log^{k_r-1}\left(t_r\right)}{(1-x_1^2t_1^2)\dotsm (1-x_{r-1}^2t_{r-1}^2)(1-x_r^2t_r^2)}\, dt_1\cdots dt_r,
\end{align}
where $D_{r}:=\{(t_1,\dotsc,t_r)\mid 0<t_1<\cdots<t_r<1\}\quad (r\in\N)$, and
$$\int_{0}^x f_1(t)dt\, f_2(t)dt\cdots f_k(t)dt:=\int\limits_{0<t_1<\cdots<t_k<x}f_1(t_1)f_2(t_2)\cdots f_k(t_k)\, dt_1dt_2\cdots dt_k.$$

\subsection{Iterated Integral Expressions of AMTVs}
From the integral expression \eqref{b3} of the generalized A-functions, we immediately see that the same shuffle product rule holds for (alternating) MTVs as for (alternating) MZVs which can be found, for e.g., in \cite{Z2016}. To study the shuffle structure among AMTVs, we define the 1-forms
\[\om_{-1}(t):=\frac{2dt}{1+t^2},\quad \om_0(t):=\frac{dt}{t}\quad\text{and}\quad \om_1(t):=\frac{2dt}{1-t^2}.\]
It is not hard to see that for $(k_1,\dotsc,k_r)\in \N^r$ and $(\sigma_1,\dotsc,\sigma_r)\in \{\pm 1\}^r$ with $(k_r,\sigma_r)\neq (1,1)$,
\begin{align}\label{equ:integralAMTV}
\frac{T(k_1,\dotsc,k_r;\sigma_1,\dotsc,\sigma_r)}{\sigma'_1\dotsm\sigma'_r}=
\int_0^1 \om_{\sigma'_1} \om_0^{k_1-1} \dotsm \om_{\sigma'_r} \om_0^{k_r-1}
\end{align}
where $\sigma'_j=\sigma_j\sigma_{j+1}\cdots\sigma_r$ ($1\le j\le r$),
and
\begin{align}\label{bb5}
\frac{T(k_1,\dotsc,k_r;\sigma_1/\sigma_2,\dotsc,\sigma_{r-1}/\sigma_r,\sigma_r)}{\sigma_1\sigma_2\cdots\sigma_r}= \int_0^1 \om_{\sigma_1} \om_0^{k_1-1}\om_{\sigma_2} \om_0^{k_2-1}\dotsm \om_{\sigma_r} \om_0^{k_r-1}.
\end{align}
For future reference, we define two maps $\bfp$ and $\bfq$ by
\begin{align}\label{equ:bfp}
  \bfp\big((k_1,\dotsc,k_r;\sigma_1,\dotsc,\sigma_r)\big):=&\, \om_{\sigma_1'} \om_0^{k_1-1}\om_{\sigma_2'} \om_0^{k_2-1}\dotsm \om_{\sigma_r} \om_0^{k_r-1}, \\
 \bfq\big( \om_{\sigma_1} \om_0^{k_1-1} \dotsm \om_{\sigma_r} \om_0^{k_r-1}\big):=&\,
  (k_1,\dotsc,k_r;\sigma_1/\sigma_2,\dotsc,\sigma_{r-1}/\sigma_r,\sigma_r). \label{equ:bfq}
\end{align}
Hence, AMTVs can be expressed by iterated integrals, which leads to the integral shuffle relations. For example,
\begin{align*}
T(\bar 1)T(\bar 2)=\int_0^1 \om_{-1} \int_0^1 \om_{-1}\om_0=2\int_0^1 \om_{-1}\om_{-1}\om_0+\int_0^1 \om_{-1}\om_0\om_{-1}=2T(1,\bar 2)+T(2,\bar 1),
\end{align*}
and
\begin{align*}
T(\bar 1)T(2)&=-\int_0^1 \om_{-1} \int_0^1 \om_{1}\om_0=-\int_0^1 \om_{-1}\om_{1}\om_0-\int_0^1 \om_{1}\om_{-1}\om_0-\int_0^1 \om_{1}\om_0\om_{-1}\\
&=T(\bar 1,2)+T(\bar 1,\bar 2)+T(\bar 2,\bar 1).
\end{align*}

Similar to the duality relation of MZVs and MTVs, we may use the iterated integral expression of AMTVs
to derive their duality relation.

\begin{thm} \label{thm:dualityAMTV}
For any admissible $\bfk=(k_1,\dotsc,k_r;\sigma_1,\dotsc,\sigma_r)$, we define its dual by
\begin{equation}\label{defn:dualOfk}
\bfk^*:=\bfq\big( \om_1^{k_r-1}u_{\sigma_r}\dotsm \om_1^{k_2-1}u_{\sigma_2\sigma_3\cdots\sigma_r}\om_1^{k_1-1}u_{\sigma_1\sigma_2\sigma_3\cdots\sigma_r}  \big)
\end{equation}
where $u_{-1}:=\om_{-1}$, $u_1:=\om_{0}$, and $\bfq$ is defined by \eqref{equ:bfq}. Then
\begin{equation}\label{equ:DualityAMTV}
T(\bfk)= T(\bfk^*).
\end{equation}
\end{thm}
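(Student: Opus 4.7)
The plan is to mimic the classical proofs of MZV/MTV duality: apply the involutive change of variables $\phi(t)=(1-t)/(1+t)$ to the iterated integral representation \eqref{equ:integralAMTV} of $T(\bfk)$. The reversal of the interval $[0,1]$ reverses the simplex $0<t_1<\cdots<t_n<1$ (and hence reverses the word), while $\phi$ acts on the three 1-forms in a simple way that must be computed.

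A direct calculation using the change-of-variables formula with the absolute Jacobian $|\phi'(s)|=2/(1+s)^2$ (taking absolute values is essential to prevent spurious signs from creeping in) shows that the substitution implements
\[
\om_a\longmapsto\om_{\sigma(a)},\qquad \sigma(0)=1,\ \sigma(1)=0,\ \sigma(-1)=-1,
\]
i.e.\ $\om_0\leftrightarrow\om_1$ and $\om_{-1}\mapsto\om_{-1}$. Combined with the simplex reversal, this yields the word-level identity
\[
\int_0^1 \om_{a_1}\om_{a_2}\cdots\om_{a_n}=\int_0^1 \om_{\sigma(a_n)}\om_{\sigma(a_{n-1})}\cdots\om_{\sigma(a_1)}
\]
for every admissible word. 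Applied to $W=\bfp(\bfk)=\om_{\sigma_1'}\om_0^{k_1-1}\cdots\om_{\sigma_r}\om_0^{k_r-1}$, each $\om_0$ in the reversed word becomes $\om_1$ and each $\om_{\sigma_i'}$ becomes $u_{\sigma_i'}$, so the new word is $\om_1^{k_r-1}u_{\sigma_r}\om_1^{k_{r-1}-1}u_{\sigma_{r-1}'}\cdots\om_1^{k_1-1}u_{\sigma_1'}$, which is precisely the one defining $\bfk^*$ in \eqref{defn:dualOfk}. Reading it back through \eqref{bb5} identifies this integral with $T(\bfk^*)/(\tau_1\cdots\tau_s)$, where $\tau_1,\dots,\tau_s$ are the signs of the $\om_{\pm 1}$-letters appearing in $\sigma(W^{\mathrm{rev}})$.

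The final and most delicate step is the sign bookkeeping, which I expect to be the main obstacle. Setting $a:=\#\{i:\sigma_i'=-1\}$, the prefactor $\sigma_1'\cdots\sigma_r'$ coming from \eqref{equ:integralAMTV} on the left equals $(-1)^a$; on the right, $\sigma(W^{\mathrm{rev}})$ contains $n-r$ copies of $\om_1$ together with $a$ copies of $\om_{-1}$, so $\tau_1\cdots\tau_s=(+1)^{n-r}(-1)^a=(-1)^a$ as well, and the two factors cancel to give $T(\bfk)=T(\bfk^*)$. One also has to verify that admissibility is preserved under the duality: the first letter of $\sigma(W^{\mathrm{rev}})$ is $\om_1$ when $k_r>1$ and $u_{\sigma_r}$ when $k_r=1$, and since $u_1=\om_0$ is divergent at $0$, convergence of the dual integral forces $\sigma_r=-1$ whenever $k_r=1$, which is exactly the admissibility condition $(k_r,\sigma_r)\ne(1,1)$ already assumed for $\bfk$.
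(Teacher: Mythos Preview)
Your proof is correct and follows essentially the same approach as the paper: both apply the involution $t\mapsto(1-t)/(1+t)$ to the iterated integral representation \eqref{equ:integralAMTV}, use that this swaps $\om_0\leftrightarrow\om_1$ and fixes $\om_{-1}$ while reversing the word, and then read off $T(\bfk^*)$ via \eqref{bb5}/\eqref{equ:bfq}. Your sign bookkeeping via $a=\#\{i:\sigma_i'=-1\}$ is the same as the paper's observation that $\big(\prod_\ell\sigma_\ell'\big)\big(\prod_{\sigma_\ell'\ne1}\sigma_\ell'\big)=\big(\prod_\ell\sigma_\ell'\big)^2=1$, and your explicit check of admissibility of the dual word is a small addition the paper leaves implicit.
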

\begin{proof}
Applying the substitution $t\rightarrow \frac{1-t}{1+t}$ in \eqref{bb5},
we get
\begin{align}\label{equ:duality}
  \int_0^1 \om_{\sigma_1} \om_0^{k_1-1}\om_{\sigma_2} \om_0^{k_2-1}\dotsm \om_{\sigma_r} \om_0^{k_r-1}
= \int_0^1 \om_1^{k_r-1}u_{\sigma_r}\dotsm \om_1^{k_2-1}u_{\sigma_2}\om_1^{k_1-1}u_{\sigma_1}
\end{align}
where $u_{-1}:=\om_{-1}$ and $u_1:=\om_{0}$. By \eqref{equ:integralAMTV} we see that
\begin{align*}
T(k_1,\dotsc,k_r;\sigma_1,\dotsc,\sigma_r)=&\, \left(\prod_{\ell=1}^r \sigma_\ell' \right)   \int_0^1 \om_{\sigma_1\sigma_2\sigma_3\cdots\sigma_r} \om_0^{k_1-1}\om_{\sigma_2\sigma_3\cdots\sigma_r} \om_0^{k_2-1}\dotsm \om_{\sigma_r} \om_0^{k_r-1} \\
=&\, \left( \prod_{\ell=1}^r \sigma_\ell' \right) \int_0^1 \om_1^{k_r-1}u_{\sigma_r} \om_1^{k_{r-1}-1}u_{\sigma_{r-1}\sigma_r}\dotsm \om_1^{k_1-1}u_{\sigma_1\sigma_2\sigma_3\cdots\sigma_r}
\end{align*}
by \eqref{equ:duality}. So by \eqref{bb5} and \eqref{equ:bfq} we have
\begin{equation*}
T(\bfk)= \left( \prod_{\ell=1}^r \sigma_\ell' \right) \left( \prod_{\substack {\ell=1\\ \sigma_\ell'\ne 1}}^r \sigma_\ell' \right)    T(\bfk^*) = \left( \prod_{\ell=1}^r \sigma_\ell' \right)^2 T(\bfk^*)=T(\bfk^*)
\end{equation*}
by the definition \eqref{defn:dualOfk}. This completes the proof of the theorem.
\end{proof}

For example,
\begin{align*}
T(\bar 1,\bar 3,2,\bar 5)&=-\int_0^1 \om_{-1}\om_1\om_0^2\om_{-1}\om_0\om_{-1}\om_0^4\\
&=-\int_0^1 \om_1^4u_{-1}\om_1u_{-1}\om_1^2u_1u_{-1}\\
&=-\int_0^1 \om_1^4\om_{-1}\om_1\om_{-1}\om_1^2\om_{0}\om_{-1}\\
&=T(\{1\}_{3},\{\bar 1\}_4,1,\bar 2,\bar 1).
\end{align*}

Now, we establish some iterated integral identities of weighted sums involving AMTVs. Applying the changes of variables $t_j\rightarrow \frac{1-t_{r+1-j}}{1+t_{r+1-j}}$ to \eqref{b3} gives
\begin{align}\label{b4}
&{\rm A}(k_1,\dotsc,k_{r-1},k_r;x_1,\dotsc,x_r)\nonumber\\
&=4^r\left\{\prod\limits_{j=1}^r\frac{(-1)^{k_j-1}}{\Gamma(k_j)}x_j\right\}\int\nolimits_{D_r} \frac{\log^{k_1-1}\left(\frac{(1-t_r)(1+t_{r-1})}{(1+t_r)(1-t_{r-1})} \right)}{(1+t_r)^2-x_1^2(1-t_r)^2}\cdots  \frac{\log^{k_{r-1}-1}\left(\frac{(1-t_2)(1+t_{1})}{(1+t_2)(1-t_{1})} \right)}{(1+t_2)^2-x_{r-1}^2(1-t_2)^2}\nonumber\\
&\quad\quad\quad\quad\quad\quad\quad\quad\quad\quad\quad\quad \times \frac{\log^{k_{r}-1}\left(\frac{1-t_1}{1+t_1} \right)}{(1+t_1)^2-x_r^2(1-t_1)^2}\, dt_1dt_2\cdots dt_r.
\end{align}
Then replacing $k_r$ by $k_r+m-1$ and summing, by a straightforward calculation we obtain
\begin{align}\label{b5}
&\sum_{k_1+\cdots+k_r=k+r-1,\atop k_1,\dotsc,k_r\geq 1} \binom{k_r+m-2}{m-1} {\rm A}(k_1,\dotsc,k_{r-1},k_r+m-1;x_1,\dotsc,x_r)\nonumber\\
&=\frac{(-1)^{k+m}4^r\, x_1\cdots x_r}{(m-1)!(k-1)!}  \int_{D_r} \frac{\log^{m-1}\left(\frac{1-t_1}{1+t_1} \right)\log^{k-1}\left(\frac{1-t_r}{1+t_r} \right)\, dt_1\dotsm dt_r}{[(1+t_1)^2-x_r^2(1-t_1)^2]\cdots [(1+t_r)^2-x_1^2(1-t_r)^2] }.
\end{align}
On the other hand, according to definition of AMTVs and using \eqref{b2} and \eqref{b3}, by direct calculation we have
\begin{align}\label{b6}
{\rm A}(j_1,j_2,\dotsc,j_{r+l};\{1\}_{l},\{\mathrm{i}\}_r)=\frac{1}{\mathrm{i}^r} T(j_1,\dotsc,j_{l-1},\ol{ j}_l,j_{l+1},\dotsc,j_{l+r-1},\ol{j_{l+r}})
\end{align}
and
\begin{align}\label{b7}
&{\rm A}(j_1,j_2,\dotsc,j_{|\bfk|+1};\mathrm{i},\{1\}_{k_r-1},\mathrm{i},\dotsc,\{1\}_{k_2-1},\mathrm{i},\{1\}_{k_1-1},\mathrm{i})\nonumber\\
&=\frac{1}{\mathrm{i}^{r+1}}  T\left(\Cat_{\substack{l=1}}^r \left\{ \{\ol{j_{|\overleftarrow{\bfk}_{l-1}|+1}}\}\diamond\{j_{|\overleftarrow{\bfk}_{l-1}|+2}, \dotsc,j_{|\overleftarrow{\bfk}_{l}|-1}\}\diamond\{\ol{j_{|\overleftarrow{\bfk}_l|}}\}\right\},\ol{j_{|\bfk|+1}} \right).
\end{align}
Hence, from \eqref{b5}-\eqref{b7} we deduce the following two identities:
\begin{align}\label{b8}
&W_l(k+r+l-1,r+l)\nonumber\\
&=\sum_{j_1+j_2+\cdots+j_{r+l}=k+r+l-1,\atop j_1,j_2,\dotsc,j_{r+p}\geq 1}
    T(j_1,\dotsc,j_{l-1},\ol{j}_l,j_{l+1},\dotsc,j_{l+r-1},\ol{j _{l+r}})\nonumber\\
&=\frac{(-1)^{k+r-1}}{(k-1)!}2^r\int_{D_{r+l}} \frac{\log^{k-1}\left( \frac{1-t_{r+l}}{1+t_{r+l}}\right)}{(1+t_1^2)\dotsm (1+t_r^2)t_{r+1}\cdots t_{r+p}}\, dt_1\cdots dt_{r+p}
\end{align}
and
\begin{align}\label{b9}
&\frac{p!m!(-1)^{p+m+r+1}}{2^{r+1}}\sum_{j_1+j_2+\cdots+j_{|\bfk|+1}=p+1+|\bfk|,\atop  j_\ell\geq 1 \forall \ell} \binom{j_{|\bfk|+1}+m-1}{m} \nonumber\\&\quad\quad\quad\quad\quad\quad\quad\quad\times T\left(\Cat_{\substack{l=1}}^r \left\{ \{\ol{j_{|\overleftarrow{\bfk}_{l-1}|+1}}\}\diamond\{j_{|\overleftarrow{\bfk}_{l-1}|+2},,\dotsc,j_{|\overleftarrow{\bfk}_{l}|-1}\}\diamond\{\ol{j_{|\overleftarrow{\bfk}_l|}}\}\right\},\ol{j_{|\bfk|+1}+m} \right)\nonumber\\
&=\int_0^1 \frac{\log^m\left( \frac{1-t}{1+t}\right)\, dt}{1+t^2} \left( \frac{dt}{t}\right)^{k_1-1} \frac{dt}{1+t^2} \left(\frac{dt}{t}\right)^{k_2-1} \dotsm \frac{dt}{1+t^2} \left( \frac{dt}{t}\right)^{k_r-1} \frac{\log^p\left( \frac{1-t}{1+t}\right)\, dt}{1+t^2}.
\end{align}

\section{Some Special AMTVs and Their Weighted Sums}\label{sec3}

In this section, we give the proofs of Theorems \ref{thm1} and \ref{thm2}, and establish some explicit evaluations for some special AMTVs and weighted sums involving AMTVs. Furthermore, we define a alternating convoluted $T$-values $T(\bfk\bar\circledast\bfl)$ and a alternating version of Kaneko--Tsumura $\psi$-function (called Kaneko--Tsumura $\bar\psi$-function), and study some explicit relations among the alternating convoluted $T$-values, Kaneko--Tsumura $\bar\psi$-values and weighted sums involving AMTVs.

\subsection{Multiple $T$-Harmonic Sums and Multiple $S$-Harmonic Sums}
For ${\bfk}:= (k_1,\dotsc, k_r)\in \N^r$, we put $\bfk_r:=(k_1,\dotsc,k_r).$
For positive integers $m$ and $n$ such that $n\ge m$, we define
\begin{align*}
&D_{n,m} :=
\left\{
  \begin{array}{ll}
\Big\{(n_1,n_2,\dotsc,n_m)\in\N^{m} \mid 0<n_1\leq n_2< n_3\leq \cdots \leq n_{m-1}<n_{m}\leq n \Big\},\phantom{\frac12}\ & \hbox{if $2\nmid m$;} \\
\Big\{(n_1,n_2,\dotsc,n_m)\in\N^{m} \mid 0<n_1\leq n_2< n_3\leq \cdots <n_{m-1}\leq n_{m}<n \Big\},\phantom{\frac12}\ & \hbox{if $2\mid m$,}
  \end{array}
\right.  \\
&E_{n,m} :=
\left\{
  \begin{array}{ll}
\Big\{(n_1,n_2,\dotsc,n_{m})\in\N^{m}\mid 1\leq n_1<n_2\leq n_3< \cdots< n_{m-1}\leq n_{m}< n \Big\},\phantom{\frac12}\ & \hbox{if $2\nmid m$;} \\
\Big\{(n_1,n_2,\dotsc,n_{m})\in\N^{m}\mid 1\leq n_1<n_2\leq n_3< \cdots \leq n_{m-1}< n_{m}\leq n \Big\}, \phantom{\frac12}\ & \hbox{if $2\mid m$.}
  \end{array}
\right.
\end{align*}

\begin{defn} (\cite[Defn. 1.1]{XZ2020}) For positive integer $m$, define
\begin{align}
&T_n({\bfk_{2m-1}}):= \sum_{\bfn\in D_{n,2m-1}} \frac{2^{2m-1}}{(\prod_{j=1}^{m-1} (2n_{2j-1}-1)^{k_{2j-1}}(2n_{2j})^{k_{2j}})(2n_{2m-1}-1)^{k_{2m-1}}},\label{MOT}\\
&T_n({\bfk_{2m}}):= \sum_{\bfn\in D_{n,2m}} \frac{2^{2m}}{\prod_{j=1}^{m} (2n_{2j-1}-1)^{k_{2j-1}}(2n_{2j})^{k_{2j}}},\label{MET}\\
&S_n({\bfk_{2m-1}}):= \sum_{\bfn\in E_{n,2m-1}} \frac{2^{2m-1}}{(\prod_{j=1}^{m-1} (2n_{2j-1})^{k_{2j-1}}(2n_{2j}-1)^{k_{2j}})(2n_{2m-1})^{k_{2m-1}}},\label{MOS}\\
&S_n({\bfk_{2m}}):= \sum_{\bfn\in E_{n,2m}} \frac{2^{2m}}{\prod_{j=1}^{m} (2n_{2j-1})^{k_{2j-1}}(2n_{2j}-1)^{k_{2j}}},\label{MES}
\end{align}
where $T_n({\bfk_{2m-1}}):=0$ if $n<m$, and $T_n({\bfk_{2m}})=S_n({\bfk_{2m-1}})=S_n({\bfk_{2m}}):=0$ if $n\leq m$. Moreover, for convenience sake, we set $T_n(\emptyset)=S_n(\emptyset):=1$. We call \eqref{MOT} and \eqref{MET} are multiple $T$-harmonic sums, and call \eqref{MOS} and \eqref{MES} are multiple $S$-harmonic sums.
\end{defn}

Similar to the definition of convoluted $T$-values $T({\bfk_{r}}{\circledast} {\bfl_{s}})$ (see \cite[Defn. 1.2]{XZ2020}), we use the MTHSs and MSHSs to define the alternating convoluted $T$-values $T({\bfk_{r}}\bar{\circledast} {\bfl_{s}})$, which can be regarded as a alternating $T$-variant of Kaneko--Yamamoto MZVs $\z({\bfk}{\circledast} {\bfl}^\star)$ (for detail, see \cite{KY2018}), where $\bfk\equiv\bfk_r=(k_1,\dotsc,k_r)$ and $\bfl\equiv\bfl_s=(l_1,\dotsc,l_s)$.
\begin{defn} For positive integers $m$ and $p$, the \emph{alternating convoluted $T$-values}
\begin{align}
&T({\bfk_{2m}}\bar{\circledast}{\bfl_{2p}})=2\su \frac{T_n({\bfk_{2m-1}})T_n({\bfl_{2p-1}})}{(2n)^{k_{2m}+l_{2p}}}(-1)^n,\label{Aee}\\
&T({\bfk_{2m-1}}\bar{\circledast}{\bfl_{2p-1}})=2\su \frac{T_n({\bfk_{2m-2}})T_n({\bfl_{2p-2}})}{(2n-1)^{k_{2m-1}+l_{2p-1}}}(-1)^n,\label{Aoo}\\
&T({\bfk_{2m}}\bar{\circledast}{\bfl_{2p-1}})=2\su \frac{T_n({\bfk_{2m-1}})S_n({\bfl_{2p-2}})}{(2n)^{k_{2m}+l_{2p-1}}}(-1)^n,\label{Aeo}\\
&T({\bfk_{2m-1}}\bar{\circledast}{\bfl_{2p}})=2\su \frac{T_n({\bfk_{2m-2}})S_n({\bfl_{2p-1}})}{(2n-1)^{k_{2m-1}+l_{2p}}}(-1)^n\label{Aoe}.
\end{align}
Here we allow $k_{2m}+l_{2p}=1$ in (\ref{Aee}), $k_{2m-1}+l_{2p-1}=1$ in (\ref{Aoo}), $k_{2m}+l_{2p-1}=1$ in (\ref{Aeo}) and $k_{2m-1}+l_{2p}=1$ in (\ref{Aoe}).
\end{defn}

In order to better describe the main results, we need the following two lemmas.
\begin{lem}\label{lem1}
\emph{(cf. \cite[Theorem~2.1]{XZ2020})}
For positive integers $m$ and $n$, the following identities hold
\begin{align}
\int_{0}^1 t^{2n-2} \log^{2m}\tt dt& = \frac{2(2m)!}{2n-1} \sum_{j=0}^m {\bar\z}(2j)T_n(\{1\}_{2m-2j}),\label{ee}\\
\int_{0}^1 t^{2n-2} \log^{2m-1}\tt dt& = -\frac{(2m-1)!}{2n-1}
 \left(2\sum_{j=1}^{m} \ol{ \z}(2j-1)T_n(\{1\}_{2m-2j})+ S_n(\{1\}_{2m-1})\right), \label{eo} \\
\int_{0}^1 t^{2n-1} \log^{2m}\tt dt& =\frac{(2m)!}{2n} \left(2\sum_{j=1}^{m} {\bar\z}(2j-1)T_n(\{1\}_{2m-2j+1})
        +  S_n(\{1\}_{2m}) \right), \label{oe} \\
\int_{0}^1 t^{2n-1} \log^{2m-1}\tt dt& = -\frac{(2m-1)!}{n} \sum_{j=0}^{m-1} {\bar\z}(2j)T_n(\{1\}_{2m-2j-1}), \label{oo}
\end{align}
where ${\bar\z}(k)$ stands for alternating Riemann zeta value denoted by
\begin{align}
{\bar\z}(k):=\su \frac{(-1)^{n-1}}{n^k} \quad \text{and} \quad {\bar\z}(0):=\frac{1}{2}\quad (k\in \N).
\end{align}
\end{lem}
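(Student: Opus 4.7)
The plan is to prove all four identities simultaneously by induction on $k$, driven by a single master recursion obtained via Fubini's theorem. Writing $F_k(t):=\log^k\tt$, the chain rule gives $F_k'(t)=-\tfrac{2k}{1-t^2}F_{k-1}(t)$, hence $F_k(t)=-2k\int_0^t F_{k-1}(s)/(1-s^2)\,ds$ for $k\ge 1$. Substituting this into $\int_0^1 t^{\alpha}F_k(t)\,dt$ and swapping the order of integration yields the master recursion
\[\int_0^1 t^{\alpha}F_k(t)\,dt=-\frac{2k}{\alpha+1}\int_0^1\frac{(1-s^{\alpha+1})F_{k-1}(s)}{1-s^2}\,ds.\]

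Next I would split the rational kernel. For $\alpha=2n-1$, the geometric identity $(1-s^{2n})/(1-s^2)=\sum_{j=1}^n s^{2j-2}$ reduces the $t^{2n-1}$ cases \eqref{oe} and \eqref{oo} to sums over $j$ of integrals of the form $\int_0^1 s^{2j-2}F_{k-1}(s)\,ds$, which are values of \eqref{ee} and \eqref{eo} at weight $k-1$ with $n$ replaced by $j$. For $\alpha=2n-2$, the decomposition $(1-s^{2n-1})/(1-s^2)=\sum_{j=1}^{n-1}s^{2j-1}+1/(1+s)$ reduces \eqref{ee} and \eqref{eo} to values of \eqref{oe} and \eqref{oo} at weight $k-1$, plus a single boundary integral $\int_0^1 F_{k-1}(s)/(1+s)\,ds$. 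I would evaluate this boundary term once and for all via the substitution $u=(1-s)/(1+s)$ followed by the expansion $1/(1+u)=\sum_{\ell\ge 0}(-1)^{\ell}u^{\ell}$:
\[\int_0^1\frac{F_{k-1}(s)}{1+s}\,ds=\int_0^1\frac{\log^{k-1}u}{1+u}\,du=(-1)^{k-1}(k-1)!\,{\bar\z}(k),\]
which is exactly how the alternating zeta values ${\bar\z}(k)$ enter the formulas.

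The base case $k=0$ trivially gives $\int_0^1 t^{2n-2}\,dt=1/(2n-1)$ and $\int_0^1 t^{2n-1}\,dt=1/(2n)$; feeding $F_0\equiv 1$ into the two recursions produces the $m=1$ cases of \eqref{eo} and \eqref{oo} directly, and one further application of the recursion handles \eqref{ee} and \eqref{oe} at $m=1$. For the inductive step, the recursion outputs are sums of the form $\sum_j T_j(\{1\}_N)/(2j-1)$, $\sum_j T_j(\{1\}_N)/(2j)$, $\sum_j S_j(\{1\}_N)/(2j-1)$, or $\sum_j S_j(\{1\}_N)/(2j)$; by appending $n_{N+1}=j$ to the chain in the definition of $T_j$ or $S_j$, each such sum collapses into a single $T_n(\{1\}_{N+1})$ or $S_n(\{1\}_{N+1})$ of one higher depth. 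Combining these collapses with the shift in the $\bar\z$-index produced by the boundary integral reproduces exactly the claimed formula at weight $k$.

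The hard part will be the combinatorial bookkeeping: checking that the parity of $N$ combined with the choice of $(2j-1)$ versus $(2j)$ in the denominator produces precisely the $\leq$/$<$ alternation dictated by $D_{n,m}$ and $E_{n,m}$, and that the alternating zeta values pair correctly with the harmonic sums (even-indexed ${\bar\z}(2j)$ with pure $T$-sums in \eqref{ee} and \eqref{oo}, odd-indexed ${\bar\z}(2j-1)$ together with the extra $S$-sum term in \eqref{eo} and \eqref{oe}). Once the $m=1$ layer is explicitly matched so that every piece of the recursion corresponds to the right summand in the claimed formula, the induction on $k$ closes automatically, since each application of the master recursion raises $k$ by exactly one.
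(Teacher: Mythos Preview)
The paper does not actually prove this lemma; it is quoted verbatim from \cite[Theorem~2.1]{XZ2020} with only the citation as justification, so there is no in-paper argument to compare against. Your proposal is therefore not duplicating anything here but supplying a proof where the paper gives none.

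Your approach is sound. The master recursion
\[
\int_0^1 t^{\alpha}F_k(t)\,dt=-\frac{2k}{\alpha+1}\int_0^1\frac{(1-s^{\alpha+1})F_{k-1}(s)}{1-s^2}\,ds
\]
is correct (the potential singularity of $F_{k-1}(s)/(1-s^2)$ at $s=1$ is killed by the zero of $1-s^{\alpha+1}$, so Fubini is justified), the two partial-fraction splittings of $(1-s^{\alpha+1})/(1-s^2)$ are correct, and the boundary evaluation $\int_0^1 F_{k-1}(s)/(1+s)\,ds=(-1)^{k-1}(k-1)!\,\bar\zeta(k)$ via $u=(1-s)/(1+s)$ is right. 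The collapsing identities you describe are exactly
\[
\sum_{j=1}^{n}\frac{2}{2j-1}T_j(\{1\}_{2m})=T_n(\{1\}_{2m+1}),\qquad
\sum_{j=1}^{n-1}\frac{2}{2j}T_j(\{1\}_{2m-1})=T_n(\{1\}_{2m}),
\]
together with their $S$-analogues, and these follow directly from the definitions of $D_{n,m}$ and $E_{n,m}$; the one thing to watch is that the upper limit of the outer sum is $n$ when the appended denominator is odd and $n-1$ when it is even, matching the $\le n$ versus $<n$ endpoint in $D_{n,m}$ and $E_{n,m}$. With that bookkeeping done, the four identities follow by simultaneous induction on $k$ exactly as you outline.
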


\begin{lem}\label{lemc1} \emph{(cf. \cite[Lemma~5.1]{XZ2020})}
Let $A_{p,q}, B_p, C_p\ (p,q\in \N)$ be any complex sequences. If
\begin{align}\label{e1}
\sum\limits_{j=1}^p A_{j,p}B_j=C_p\quad\text{and}\quad A_{p,p}:=1
\end{align}
hold, then
\begin{align}\label{e2}
B_p=\sum\limits_{j=1}^p C_j \sum\limits_{k=1}^{p-j} (-1)^k \left\{\sum\limits_{i_0<i_1<\cdots<i_{k-1}<i_k,\atop i_0=j,i_k=p} \prod\limits_{l=1}^k A_{i_{l-1},i_l}\right\},
\end{align}
where $\sum\limits_{k=1}^0 (\cdot):=1$.
\end{lem}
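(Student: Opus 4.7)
The plan is to recognize the hypothesis as a unitriangular linear system and invert it explicitly. Define a lower triangular matrix $M$ (indexed by positive integers) by $M_{p,j}:=A_{j,p}$ for $j\le p$; the assumption $A_{p,p}=1$ says $M$ is unitriangular, and the relation \eqref{e1} reads $MB=C$. What must be proved is an explicit formula for the $p$-th row of $M^{-1}$ applied to $C$.

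Writing $M=I+N$ with $N$ strictly lower triangular ($N_{p,j}=A_{j,p}$ for $j<p$, and zero otherwise), the Neumann series $M^{-1}=\sum_{k\ge 0}(-1)^k N^k$ terminates on every finite truncation because $N$ is nilpotent there. A direct induction on $k$ shows that
\[
(N^k)_{p,j}=\sum_{j=i_0<i_1<\cdots<i_{k-1}<i_k=p}\prod_{l=1}^k A_{i_{l-1},i_l},
\]
which is precisely the chain sum appearing in \eqref{e2}. Consequently $(M^{-1})_{p,j}=\delta_{p,j}+\sum_{k=1}^{p-j}(-1)^k(N^k)_{p,j}$, and assembling $B_p=\sum_{j=1}^p (M^{-1})_{p,j}C_j$, the $\delta_{p,j}$ piece produces the $C_p$ contribution (which matches the convention $\sum_{k=1}^0(\cdot):=1$ at $j=p$), while the remaining terms reproduce \eqref{e2}.

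A purely elementary alternative, avoiding matrix language, is strong induction on $p$. Isolate $B_p=C_p-\sum_{j=1}^{p-1}A_{j,p}B_j$, substitute the induction hypothesis into each $B_j$ with $j<p$, interchange the two summations so that the coefficient of each $C_i$ (for $i\le p-1$) is collected, and then apply the key combinatorial observation: a chain $i=i_0<i_1<\cdots<i_{k-1}=j$ of length $k-1$ concatenated with the edge $j\to p$ yields a chain of length $k$ from $i$ to $p$, and every such chain arises uniquely in this way (with $j$ equal to the penultimate node $i_{k-1}$).

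I expect the only delicate point to be the sign and boundary bookkeeping at $i=j$ in the induction hypothesis, where the convention $\sum_{k=1}^0(\cdot):=1$ forces that term to contribute simply $C_i$; this, combined with the direct coefficient $-A_{i,p}$ coming from the $B_p=C_p-\sum A_{j,p}B_j$ step, must reassemble cleanly into the $k=1$ contribution $(-1)^1 A_{i,p}$ of the target formula. The matrix-theoretic framing makes this transparent, so I would present the proof using the Neumann-series argument and remark that the induction version gives an equivalent combinatorial derivation.
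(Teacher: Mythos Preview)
The paper does not prove this lemma at all; it merely cites \cite[Lemma~5.1]{XZ2020}. Your argument is correct and complete. Recognizing \eqref{e1} as a unitriangular system $MB=C$ and inverting via the Neumann series $M^{-1}=\sum_{k\ge 0}(-1)^kN^k$ is the natural approach; your identification of $(N^k)_{p,j}$ with the chain sum $\sum_{j=i_0<\cdots<i_k=p}\prod_l A_{i_{l-1},i_l}$ is the heart of the matter and is easily checked by induction on $k$. The handling of the infinite index set is also fine, since computing $B_p$ only requires the $p\times p$ truncation where $N$ is nilpotent. The strong-induction alternative you outline is equally valid and amounts to the same combinatorics read recursively. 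Either version would serve as a self-contained proof here.
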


\subsection{Several Explicit Formulas of Weighted Sums}
Using the iterated integral identity \eqref{b8} with the help of Lemma \ref{lem1}, we can get the following theorem.
\begin{thm}\label{thm-WST} For any $m,p\in\N$ and $l\in\N_0$,
\begin{align}
&\begin{aligned}\label{b20}
W_l(2p+2m+l-1,2m+l)=2(-1)^m \sum_{j=0}^{p-1} {\bar\z}(2p-2-2j)T\Big(\big(\{1\}_{2m-1},l\big)\bar\circledast\big(\{1\}_{2j+2}\big)\Big),
\end{aligned}\\
&\begin{aligned}\label{b21}
W_l(2p+2m+l-2,2m+l)&=2(-1)^m \sum_{j=0}^{p-2} \ol{ \z}(2p-3-2j)T\Big(\big(\{1\}_{2m-1},l\big)\bar\circledast\big(\{1\}_{2j+2}\big)\Big)\\
&\quad+(-1)^m T\Big(\big(\{1\}_{2m-1},l\big)\bar\circledast\big(\{1\}_{2p-1}\big)\Big),
\end{aligned}\\
&\begin{aligned}\label{b22}
W_l(2p+2m+l-2,2m+l-1)&=2(-1)^{m-1} \sum_{j=0}^{p-1} \ol{ \z}(2p-1-2j)T\Big(\big(\{1\}_{2m-2},l\big)\bar\circledast\big(\{1\}_{2j+1}\big)\Big)\\
&\quad+(-1)^{m-1} T\Big(\big(\{1\}_{2m-2},l\big)\bar\circledast\{1\}_{2p}\Big),
\end{aligned}\\
&\begin{aligned}\label{b23}
W_l(2p+2m+l-3,2m+l-1)=2(-1)^{m-1} \sum_{j=0}^{p-1} \ol{ \z}(2p-2-2j)T\Big(\big(\{1\}_{2m-2},l\big)\bar\circledast\big(\{1\}_{2j+1}\big)\Big).
\end{aligned}
\end{align}
\end{thm}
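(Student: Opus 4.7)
The plan is to take \eqref{b8} as the starting point and evaluate the $(r+l)$-fold iterated integral layer by layer, matching the result with the definitions \eqref{Aee}--\eqref{Aoe} of the alternating convoluted $T$-values. All four formulas are proved by the same method; they differ only in which case of Lemma \ref{lem1} is invoked at the final step, according to the parity of the depth $r$ and of the log exponent $k$. Specifically, \eqref{b20} corresponds to $(r,k)=(2m,2p)$ using \eqref{oo}; \eqref{b21} to $(2m,2p-1)$ using \eqref{oe}; \eqref{b22} to $(2m-1,2p)$ using \eqref{eo}; and \eqref{b23} to $(2m-1,2p-1)$ using \eqref{ee}.

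First I would expand each factor $\tfrac{1}{1+t_j^2}=\sum_{k\ge 0}(-1)^k t_j^{2k}$ in \eqref{b8} and integrate the variables $t_1,\dotsc,t_r$ successively over the simplex $0<t_1<\dotsb<t_r<t_{r+1}$. A routine induction on $r$ shows that the result equals
\begin{equation*}
\sum_{\bfn}\frac{(-1)^{n_r-\lceil r/2\rceil}\,t_{r+1}^{\alpha(r)}}{(2n_1-1)(2n_2)(2n_3-1)(2n_4)\cdots},
\end{equation*}
where $\alpha(r)=2n_r$ if $r$ is even, $\alpha(r)=2n_r-1$ if $r$ is odd, and the constraints on $\bfn=(n_1,\dotsc,n_r)$ are the alternating strict/non-strict inequalities $0<n_1\le n_2<n_3\le n_4<\dotsb$, which coincide with the defining constraints of $D_{\cdot,r}$. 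Isolating the last denominator identifies the inner sum over $n_1,\dotsc,n_{r-1}$ with $T_{n_r}(\{1\}_{r-1})/2^{r-1}$. Each of the subsequent $l-1$ integrations against $\tfrac{dt}{t}$ contributes an additional factor $1/\alpha(r)$, so the final integration takes the form $\tfrac{1}{\alpha(r)^{l-1}}\int_0^1 t^{\alpha(r)-1}\log^{k-1}\bigl(\tfrac{1-t}{1+t}\bigr)\,dt$, which Lemma \ref{lem1} evaluates.

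Combining everything and translating the resulting double series via \eqref{Aee}--\eqref{Aoe}, after a change of summation index $j\mapsto p-1-j$ to reindex the $\bar\zeta$-arguments, yields the claimed formulas. The main obstacle is careful bookkeeping of the signs and constants: the prefactor $(-1)^{k+r-1}2^r/(k-1)!$ from \eqref{b8}, the telescoping sign from the iterated geometric-series integration, the sign and $(k-1)!$ output by Lemma \ref{lem1}, and the factor $2$ built into the definitions \eqref{Aee}--\eqref{Aoe} must all combine to produce the overall $(-1)^m$ or $(-1)^{m-1}$. A secondary subtlety is that Lemma \ref{lem1} equations \eqref{oe} and \eqref{eo} produce additional $S_n$ contributions; these account precisely for the $S_n$-terms in \eqref{Aeo} and \eqref{Aoe}, hence for the extra summand without a $\bar\zeta$-factor appearing in \eqref{b21} and \eqref{b22}. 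Verifying that the alternating inequality pattern arising from integration coincides with $D_{n,r}$ (rather than $E_{n,r}$) is what ensures that the inner harmonic sum is of $T$- rather than $S$-type.
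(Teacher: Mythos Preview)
Your proposal is correct and follows essentially the same route as the paper's proof: start from \eqref{b8}, expand each $1/(1+t_j^2)$ as a geometric series and integrate in $t_1,\dotsc,t_r$ to produce the sum $\sum_n(-1)^n T_n(\{1\}_{r-1})/(2n)^l$ (or its odd analogue) times the residual one-variable integral, then invoke the appropriate case of Lemma~\ref{lem1} according to the parities of $r$ and $k-1$, and finally match against \eqref{Aee}--\eqref{Aoe} after the reindexing $j\mapsto p-1-j$. Your explicit tracking of the alternating inequality pattern $0<n_1\le n_2<n_3\le\dotsb$ and its identification with $D_{\cdot,r}$ is a welcome clarification of a step the paper leaves implicit, but the argument is otherwise identical.
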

\begin{proof} Expanding the $(1+t_j)^{-1}$ into geometric series, then the formula \eqref{b8} can be rewritten in the form
\begin{align*}
&W_l(k+r+l-1,r+l)\\
&=\frac{(-1)^{k-1}2^r}{(k-1)!} \sum_{0<m_1<\cdots<m_r} \frac{(-1)^{m_r} \int_0^1 t^{2m_r-r-1}\log^{k-1}\tt dt}{(2m_1-1)(2m_2-2)\dotsm (2m_{r-1}-r+1)(2m_r-r)^l}.
\end{align*}
By straightforward calculations, it is easy to see that if $r=2m$ and $k=2p$, then
\begin{align*}
&W_l(2p+2m+l-1,2m+l)\\
&=\frac{2(-1)^{m-1}}{(2p-1)!} \su (-1)^n \frac{T_n(\{1\}_{2m-1})}{(2n)^l} \int_0^1 t^{2n-1}\log^{2p-1}\tt dt,
\end{align*}
if $r=2m$ and $k=2p-1$, then
\begin{align*}
&W_l(2p+2m+l-2,2m+l)\\
&=\frac{2(-1)^{m}}{(2p-2)!} \su (-1)^n \frac{T_n(\{1\}_{2m-1})}{(2n)^l} \int_0^1 t^{2n-1}\log^{2p-2}\tt dt,
\end{align*}
if $r=2m-1$ and $k=2p$, then
\begin{align*}
&W_l(2p+2m+l-2,2m+l-1)\\
&=\frac{2(-1)^{m}}{(2p-1)!} \su (-1)^n \frac{T_n(\{1\}_{2m-2})}{(2n-1)^l} \int_0^1 t^{2n-2}\log^{2p-1}\tt dt,
\end{align*}
if $r=2m-1$ and $k=2p-1$, then
\begin{align*}
&W_l(2p+2m+l-3,2m+l-1)\\
&=\frac{2(-1)^{m-1}}{(2p-2)!} \su (-1)^n \frac{T_n(\{1\}_{2m-2})}{(2n-1)^l} \int_0^1 t^{2n-2}\log^{2p-2}\tt dt.
\end{align*}
Then, with the help of integrals \eqref{ee}-\eqref{oo}, we may easily deduce these desired formulas.
\end{proof}

Therefore, from \eqref{b20}, \eqref{b23} and Lemma \ref{lemc1}, we can get the following two theorems.
\begin{thm}
For any $m,p\in\N$ and $l\in\N_0$,
\begin{align}\label{c3}
T\Big(\big(\{1\}_{2m-1},l\big)\bar\circledast\big(\{1\}_{2p}\big)\Big)&=2\su \frac{T_n(\{1\}_{2m-1})T_n(\{1\}_{2p-1})}{(2n)^{l+1}}(-1)^n\nonumber\\
&=(-1)^m \sum_{j=1}^p \alpha_{p-j}W_l(2j+2m+l-1,2m+l)
\end{align}
where $\alpha_n=(-1)^n \pi^{2n}/(2n+1)!$ for all $n\in\N_0$.
\end{thm}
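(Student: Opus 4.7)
The first equality in \eqref{c3} is simply the definition \eqref{Aee} of the alternating convoluted $T$-value specialised to $\bfk_{2m}=(\{1\}_{2m-1},l)$ and $\bfl_{2p}=\{1\}_{2p}$, which forces $k_{2m}+l_{2p}=l+1$. For the second equality, the plan is to invert the linear relation supplied by \eqref{b20} of Theorem~\ref{thm-WST}. Set $B_j:=T\big((\{1\}_{2m-1},l)\bar\circledast\{1\}_{2j}\big)$ and $C_p:=(-1)^m W_l(2p+2m+l-1,2m+l)$. After re-indexing $j\mapsto j+1$ in \eqref{b20}, that formula reads
\begin{equation*}
C_p=\sum_{j=1}^{p}2\bar\z(2p-2j)\,B_j\qquad(p\ge 1),
\end{equation*}
which is a convolution with kernel $a_n:=2\bar\z(2n)$ satisfying $a_0=1$.

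At this stage one can apply Lemma~\ref{lemc1} with $A_{j,p}=a_{p-j}$, but it is conceptually cleanest to pass to formal generating series. Writing $\Psi(t)=\sum_{p\ge 1}B_p t^{2p}$, $\Phi(t)=\sum_{p\ge 1}C_p t^{2p}$, and $A(t)=\sum_{n\ge 0}a_n t^{2n}$, the convolution above becomes $\Phi=A\Psi$, so $\Psi=A^{-1}\Phi$. If $A^{-1}(t)=\sum_{n\ge 0}b_n t^{2n}$, extracting coefficients yields $B_p=\sum_{j=1}^p b_{p-j}C_j$, and the theorem reduces to the single identity $b_n=\alpha_n$.

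This last identity is a classical generating-function computation. The Taylor expansion of sine gives $\sum_{n\ge 0}\alpha_n t^{2n}=\sin(\pi t)/(\pi t)$ immediately. For $A(t)$, I will combine the expansion $\pi t\cot(\pi t)=1-2\sum_{n\ge 1}\zeta(2n)t^{2n}$, the relation $\bar\z(2n)=(1-2^{1-2n})\zeta(2n)$, and the half-angle identity $\cot(\pi t/2)-\cot(\pi t)=\csc(\pi t)$ to obtain $A(t)=\pi t/\sin(\pi t)$. Then $A(t)\cdot\sin(\pi t)/(\pi t)=1$, confirming $b_n=\alpha_n$. The only real obstacle I anticipate is a bookkeeping one: if one insists on invoking Lemma~\ref{lemc1} directly, then its nested-sum output must be matched against the coefficients of the reciprocal power series, a step which is transparent in the generating-function viewpoint above.
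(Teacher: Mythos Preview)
Your proof is correct and follows essentially the same route as the paper: both invert the convolution \eqref{b20} to solve for the alternating convoluted $T$-value. The only difference is that the paper applies Lemma~\ref{lemc1} and then cites \cite[Proposition~5.2]{XZ2020} to identify the resulting nested sum with $\alpha_{p-j}$, whereas you carry out the equivalent computation directly via the generating-function identity $A(t)=\pi t/\sin(\pi t)$; this makes your argument somewhat more self-contained.
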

\begin{proof} Setting
\begin{align*}
&A_{j,p}=2{\bar\z}(2p-2j),\quad B_j=\su \frac{T_n(\{1\}_{2m-1})T_n(\{1\}_{2j-1})}{(2n)^{l+1}}(-1)^n
\end{align*}
and
\begin{align*}
&C_p=(-1)^m W_l(2p+2m+l-1,2m+l)
\end{align*}
in Lemma \ref{lemc1} and using \eqref{b20}, we see that it suffices to prove $Z(j,p)=\alpha_{p-j}$ where
\begin{equation*}
Z(j,p):=\sum_{k=1}^{p-j}(-2)^k\sum\limits_{i_0<i_1<\cdots<i_{k-1}<i_k,\atop i_0=j,i_k=p} \prod\limits_{l=1}^k \ol{ \z}(2i_l-2i_{l-1}),\quad Z(p,p):=1.
\end{equation*}
We show in \cite[Proposition 5.2]{XZ2020} that for any $j\in\N$ and $n\in\N_0$,
\begin{equation*}
Z(j,j+n)=\frac{(-1)^{n}\pi^{2n}}{(2n+1)!}=\alpha_n.
\end{equation*}
So the theorem follows immediately.
\end{proof}

\begin{thm}
For any $m,p\in\N$ and $l\in\N_0$,
\begin{align}\label{c4}
T\Big(\big(\{1\}_{2m-2},l\big)\bar\circledast\big(\{1\}_{2p-1}\big)\Big)&=2\su \frac{T_n(\{1\}_{2m-2})T_n(\{1\}_{2p-2})}{(2n-1)^{l+1}}(-1)^n\nonumber\\
&=(-1)^{m-1} \sum_{j=1}^p \alpha_{p-j} W_l(2j+2m+l-3,2m+l-1)
\end{align}
where $\alpha_n=(-1)^n \pi^{2n}/(2n+1)!$ for all $n\in\N_0$.
\end{thm}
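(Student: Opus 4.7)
The first equality in the claimed identity is immediate from the definition \eqref{Aoo} of the alternating convoluted $T$-values, applied to $\bfk_{2m-1}=(\{1\}_{2m-2},l)$ and $\bfl_{2p-1}=\{1\}_{2p-1}$, since the last two components contribute $k_{2m-1}+l_{2p-1}=l+1$ to the exponent on $2n-1$. So the whole content of the theorem is the second equality, which expresses this convoluted value as a weighted sum of $W_l$'s.

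The plan is to mirror the proof of the preceding theorem, using the inversion Lemma \ref{lemc1}. Re-indexing $j\mapsto j-1$ in \eqref{b23} rewrites that identity as
\begin{equation*}
(-1)^{m-1}W_l(2p+2m+l-3,2m+l-1)=\sum_{j=1}^{p} 2\bar\z(2p-2j)\, T\Big(\big(\{1\}_{2m-2},l\big)\bar\circledast\big(\{1\}_{2j-1}\big)\Big).
\end{equation*}
This is precisely relation \eqref{e1} of Lemma \ref{lemc1} with the choices
\begin{align*}
A_{j,p}&=2\bar\z(2p-2j),\\
B_j&=T\Big(\big(\{1\}_{2m-2},l\big)\bar\circledast\big(\{1\}_{2j-1}\big)\Big),\\
C_p&=(-1)^{m-1}W_l(2p+2m+l-3,2m+l-1),
\end{align*}
and the normalization $A_{p,p}=2\bar\z(0)=1$ is satisfied by the convention in Lemma \ref{lem1}.

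Applying Lemma \ref{lemc1} yields $B_p=\sum_{j=1}^p Z(j,p)\,C_j$, where
\begin{equation*}
Z(j,p)=\sum_{k=1}^{p-j}(-2)^k\sum_{\substack{i_0<i_1<\cdots<i_k\\ i_0=j,\,i_k=p}}\prod_{l=1}^k\bar\z(2i_l-2i_{l-1}),\qquad Z(p,p):=1.
\end{equation*}
This is exactly the quantity evaluated in the proof of the previous theorem via \cite[Proposition 5.2]{XZ2020}, giving $Z(j,p)=\alpha_{p-j}$ for all $1\le j\le p$. Substituting this value and the expression for $C_j$ completes the proof.

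There is no real obstacle: the entire argument parallels that of the preceding theorem, with the only change being the parity of the index strings and the replacement of \eqref{b20} by \eqref{b23}. The one point worth checking carefully is the re-indexing of the sum in \eqref{b23} so that $A_{p,p}=1$, and the verification that the combinatorial object $Z(j,p)$ arising from $A_{j,p}=2\bar\z(2p-2j)$ is literally the same as in the previous proof, so that the evaluation $Z(j,p)=\alpha_{p-j}$ can be invoked directly.
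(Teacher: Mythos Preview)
Your proof is correct and follows essentially the same approach as the paper: both set $A_{j,p}=2\bar\z(2p-2j)$, $B_j=T\big((\{1\}_{2m-2},l)\bar\circledast\{1\}_{2j-1}\big)$, and $C_p=(-1)^{m-1}W_l(2p+2m+l-3,2m+l-1)$, apply Lemma~\ref{lemc1} to \eqref{b23}, and then invoke \cite[Proposition~5.2]{XZ2020} to evaluate the resulting combinatorial sum $Z(j,p)$ as $\alpha_{p-j}$. Your version is somewhat more explicit about the re-indexing and the normalization $A_{p,p}=1$, but there is no substantive difference in method.
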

\begin{proof}  Setting
\begin{align*}
&A_{j,p}:=2{\bar\z}(2p-2j),\quad B_j:=2\su \frac{T_n(\{1\}_{2m-2})T_n(\{1\}_{2j-2})}{(2n-1)^{l+1}}(-1)^n
\end{align*}
and
\begin{align*}
C_p:=(-1)^{m-1}W_l(2p+2m+l-3,2m+l-1)
\end{align*}
in Lemma \ref{lemc1} and using \eqref{b23} yields the desired formula.
\end{proof}

Similarly, applying \eqref{b21}, \eqref{b22}, \eqref{c3} and \eqref{c4}, we know that for positive integers $m$ and $p$, the sums
\[
2\su \frac{T_n(\{1\}_{2m-1})S_n(\{1\}_{2p-2})}{(2n)^{l+1}}(-1)^n\quad \text{and}\quad 2\su \frac{T_n(\{1\}_{2m-2})S_n(\{1\}_{2p-1})}{(2n-1)^{l+1}}(-1)^n
\]
can also be evaluated in terms of products of weighted sums $W_l(k,r)$ and $\alpha_{j-p}$. By elementary calculations, we can get the following theorems.
\begin{thm}
For any $m,p\in\N$ and $l\in\N_0$,
\begin{align*}
&T\Big(\big(\{1\}_{2m-1},l\big)\bar\circledast\big(\{1\}_{2p-1}\big)\Big)
    =2\su \frac{T_n(\{1\}_{2m-1})S_n(\{1\}_{2p-2})}{(2n)^{l+1}}(-1)^n \\
&=(-1)^mW_l(2p+2m+l-2,2m+l)\nonumber\\&\quad-2(-1)^m\sum_{1\leq i\leq j\leq p-1} \alpha_{j-i} {\bar\z}(2p-1-2j) W_l(2i+2m+l-1,2m+l) .
\end{align*}
\end{thm}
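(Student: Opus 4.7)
The plan is to combine two ingredients already established in the paper: the expression \eqref{b21} from Theorem~\ref{thm-WST} for the weighted sum $W_l(2p+2m+l-2,2m+l)$, and the earlier closed-form evaluation \eqref{c3} for the ``even length'' convoluted $T$-values. The first equality in the statement is just the definition \eqref{Aeo} specialized to $\bfk=(\{1\}_{2m-1},l)$ of length $2m$ and $\bfl=\{1\}_{2p-1}$ of length $2p-1$, so no work is needed there.

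For the second equality, my first step is to solve \eqref{b21} for $T\big((\{1\}_{2m-1},l)\bar\circledast\{1\}_{2p-1}\big)$, which isolates it as $(-1)^m W_l(2p+2m+l-2,2m+l)$ minus a finite sum (over $j=0,\dots,p-2$) of terms $2(-1)^m\bar\z(2p-3-2j)\,T\big((\{1\}_{2m-1},l)\bar\circledast\{1\}_{2j+2}\big)$; then multiplying by $(-1)^m$ (and using $(-1)^{2m}=1$) eliminates the leading sign in the sum.

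Next, since each remaining convoluted $T$-value has even-length second argument $\{1\}_{2j+2}=\{1\}_{2(j+1)}$, I would invoke formula \eqref{c3} with $p$ replaced by $j+1$ to rewrite
\[
T\Big(\big(\{1\}_{2m-1},l\big)\bar\circledast\big(\{1\}_{2(j+1)}\big)\Big)=(-1)^m\sum_{i=1}^{j+1}\alpha_{j+1-i}\,W_l(2i+2m+l-1,2m+l),
\]
and substitute this expression back. The final step is a routine reindexing: setting $j'=j+1$ so that $j'$ ranges over $1,\dots,p-1$ and $2p-3-2j=2p-1-2j'$, the double sum collapses to the stated triangular sum over $1\le i\le j'\le p-1$, and the factor $(-1)^m\cdot(-1)^m=1$ absorbs the inner sign.

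Conceptually the argument is just linear algebra in disguise (inverting the triangular system relating the $W_l$'s to the convoluted $T$-values), and no new identities are required; the only obstacle is careful index bookkeeping, in particular tracking the parity shifts coming from the substitution $\{1\}_{2j+2}\mapsto\{1\}_{2(j+1)}$ and verifying that the sign $(-1)^m$ ends up in the correct places. A parallel argument — using \eqref{b22} in place of \eqref{b21} and \eqref{c4} in place of \eqref{c3} — then yields the companion evaluation for $T\big((\{1\}_{2m-2},l)\bar\circledast\{1\}_{2p}\big)$ alluded to just before the theorem.
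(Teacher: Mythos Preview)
Your proposal is correct and follows exactly the route the paper indicates: the paper states just before the theorem that these sums are obtained ``applying \eqref{b21}, \eqref{b22}, \eqref{c3} and \eqref{c4}'' by ``elementary calculations,'' and your argument---solving \eqref{b21} for the odd-length convoluted $T$-value, substituting the even-length ones via \eqref{c3}, and reindexing $j'=j+1$---is precisely that calculation carried out in full detail. The sign and index bookkeeping you describe checks out line by line.
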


\begin{thm}
For any $m,p\in\N$ and $l\in\N_0$,
\begin{align*}
&T\Big(\big(\{1\}_{2m-2},l\big)\bar\circledast\big(\{1\}_{2p}\big)\Big)
    =2\su \frac{T_n(\{1\}_{2m-2})S_n(\{1\}_{2p-1})}{(2n-1)^{l+1}}(-1)^n \\
&=(-1)^{m-1}W_l(2p+2m+l-2,2m+l-1)\nonumber\\&\quad-2(-1)^{m-1}\sum_{1\leq i\leq j\leq p}  \alpha_{j-i}  {\bar\z}(2p+1-2j)W_l(2i+2m+l-3,2m+l-1).
\end{align*}
\end{thm}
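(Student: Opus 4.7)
\medskip

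\noindent\textbf{Proof proposal.} The plan is to mimic the derivation used in the immediately preceding theorem, but with \eqref{b22} playing the role of \eqref{b21} and the evaluation \eqref{c4} playing the role of \eqref{c3}. The series in question is exactly $T\big((\{1\}_{2m-2},l)\bar\circledast\{1\}_{2p}\big)$ by the definition \eqref{Aoe}, so the first equality is simply the definition and the real content is the second equality.

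First I would isolate the target AMCT value from \eqref{b22}. Writing \eqref{b22} as
\begin{equation*}
(-1)^{m-1}W_l(2p+2m+l-2,2m+l-1)=2\sum_{j=0}^{p-1}\ol{\z}(2p-1-2j)\,T\big((\{1\}_{2m-2},l)\bar\circledast(\{1\}_{2j+1})\big)+T\big((\{1\}_{2m-2},l)\bar\circledast\{1\}_{2p}\big),
\end{equation*}
and solving for the last term, I would reindex the sum on the right via $k=j+1$, so that the summand becomes $\ol{\z}(2p+1-2k)\,T\big((\{1\}_{2m-2},l)\bar\circledast(\{1\}_{2k-1})\big)$ with $k$ running from $1$ to $p$. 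This puts the remaining $T$-values in exactly the shape handled by \eqref{c4}.

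Next I would substitute \eqref{c4} (with $p$ there replaced by $k$), namely
\begin{equation*}
T\big((\{1\}_{2m-2},l)\bar\circledast(\{1\}_{2k-1})\big)=(-1)^{m-1}\sum_{i=1}^{k}\alpha_{k-i}\,W_l(2i+2m+l-3,2m+l-1),
\end{equation*}
into the reindexed identity. Combining the double sum $\sum_{k=1}^{p}\sum_{i=1}^{k}$ into $\sum_{1\le i\le j\le p}$ (with $j=k$) immediately yields the claimed formula.

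There is no real obstacle here: the argument is a two-step linear substitution and the signs and indexing are controlled by the patterns already established in \eqref{b22} and \eqref{c4}. The only step worth checking carefully is the reindexing $k=j+1$ and the corresponding shift $2p-1-2j\mapsto 2p+1-2k$, together with the fact that the lower summation limit $j=0$ in \eqref{b22} becomes $k=1$, which matches precisely the inner range $i\le k$ of \eqref{c4}.
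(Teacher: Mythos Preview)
Your argument is correct and is exactly the approach the paper intends: the paper simply states that the result follows ``by elementary calculations'' from \eqref{b21}, \eqref{b22}, \eqref{c3} and \eqref{c4}, and your proof spells out precisely the relevant substitution---isolate the target term from \eqref{b22}, reindex, and insert \eqref{c4}. The reindexing and sign bookkeeping you flag are all accurate.
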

\medskip
\noindent
{\bf Proofs of Theorems \ref{thm1} and \ref{thm2}}. Changing $(m,p)$ to $(p,m)$ in \eqref{c3} and \eqref{c4}, and using the duality of series on the left hand sides, we may easily deduce the evaluations \eqref{r1} and \eqref{r2}. Thus, we complete the proofs of Theorems \ref{thm1} and \ref{thm2}.\hfill$\square$

\subsection{Multiple Integrals Associated with 3-Labeled Posets}

In this subsection, we introduce the multiple integrals associated with 3-labeled posets, and define the Kaneko--Tsumura $\bar\psi$-function. We express AMTVs and Kaneko--Tsumura $\bar\psi$-values in terms of
multiple integral associated with 3-labeled posets, which implies that the Kaneko--Tsumura $\bar\psi$-values can be expressed in terms of linear combination of MTVs. Further, we establish some explicit relations between alternating convoluted $T$-values and Kaneko--Tsumura $\bar\psi$-values. The key properties of these integrals was first studied by Yamamoto in \cite{Y2014}.

\begin{defn}
A \emph{$3$-poset} is a pair $(X,\delta_X)$, where $X=(X,\leq)$ is
a finite partially ordered set and $\delta_X$ is a map from $X$ to $\{-1,0,1\}$.
We often omit  $\delta_X$ and simply say ``a 3-poset $X$''.
The $\delta_X$ is called the \emph{label map} of $X$.

Similar to 2-poset, a 3-poset $(X,\delta_X)$ is called \emph{admissible}
if $\delta_X(x) \ne 1$ for all maximal
elements and $\delta_X(x) \ne 0$ for all minimal elements $x \in X$.
\end{defn}

\begin{defn}
For an admissible $3$-poset $X$, we define the associated integral
\begin{equation}
I(X)=\int_{\Delta_X}\prod_{x\in X}\omega_{\delta_X(x)}(t_x),
\end{equation}
where
\[\Delta_X=\bigl\{(t_x)_x\in [0,1]^X \bigm| t_x<t_y \text{ if } x<y\bigr\}\]
and
\[\om_{-1}(t):=\frac{2dt}{1+t^2},\quad \om_0(t):=\frac{dt}{t}\quad\text{and}\quad \om_1(t):=\frac{2dt}{1-t^2}.\]
\end{defn}

For the empty 3-poset, denoted $\emptyset$, we put $I(\emptyset):=1$.

\begin{pro}\label{prop:shuffl3poset}
For non-comparable elements $a$ and $b$ of a $3$-poset $X$, $X^b_a$ denotes the $3$-poset that is obtained from $X$ by adjoining the relation $a<b$. If $X$ is an admissible $3$-poset, then the $3$-poset $X^b_a$ and $X^a_b$ are admissible and
\begin{equation}
I(X)=I(X^b_a)+I(X^a_b).
\end{equation}
\end{pro}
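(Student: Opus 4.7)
The plan is to establish the identity by decomposing the integration simplex $\Delta_X$ along the (codimension-one) hyperplane $t_a=t_b$, which is a null set and which separates $\Delta_X$ into the two pieces that correspond to $\Delta_{X^b_a}$ and $\Delta_{X^a_b}$. This is the natural ``shuffle'' argument used by Yamamoto for 2-posets, and since the integrand $\prod_{x\in X}\om_{\delta_X(x)}(t_x)$ does not involve the order on $X$ at all (only the label map), the factorization of $I(X)$ into the two pieces is immediate once the decomposition of domains is verified.

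First I would check admissibility of $X^b_a$ and $X^a_b$. Adjoining the single relation $a<b$ can only remove elements from the set of maximal (resp.\ minimal) elements of $X$: an element $x$ that was maximal in $X$ remains maximal in $X^b_a$ unless $x=a$, and likewise $x$ minimal in $X$ remains minimal in $X^b_a$ unless $x=b$. In particular, the sets of maximal and minimal elements of $X^b_a$ are contained in those of $X$, so the defining conditions $\delta_X(x)\ne 1$ on maximal elements and $\delta_X(x)\ne 0$ on minimal elements are inherited. The same reasoning applies to $X^a_b$ with the roles reversed.

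Next, since $a$ and $b$ are incomparable in $X$, the two open regions
\[
\Delta_X\cap\{t_a<t_b\}\quad\text{and}\quad\Delta_X\cap\{t_b<t_a\}
\]
partition $\Delta_X$ up to the measure-zero hyperplane $t_a=t_b$. I claim that the first region equals $\Delta_{X^b_a}$, and symmetrically the second equals $\Delta_{X^a_b}$. The inclusion $\Delta_{X^b_a}\subset \Delta_X\cap\{t_a<t_b\}$ is immediate from the definition. For the reverse inclusion, every new order relation in $X^b_a$ is either $a<b$ itself or a transitive consequence $c<b$ (for $c\le a$), $a<d$ (for $b\le d$), or $c<d$ (for $c\le a$ and $b\le d$). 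In each case the corresponding inequality on real coordinates follows by chaining the original inequalities from $\Delta_X$ with $t_a<t_b$. Thus the decomposition of $\Delta_X$ is established.

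Integrating $\prod_{x\in X}\om_{\delta_X(x)}(t_x)$ over each region and recognising that $\delta_{X^b_a}=\delta_{X^a_b}=\delta_X$ (the label is defined on $X$ as a set, independent of the order) immediately yields $I(X)=I(X^b_a)+I(X^a_b)$. The only mild subtlety is checking that the boundary $\{t_a=t_b\}\cap\Delta_X$ contributes nothing: this holds because each $\om_{\delta_X(x)}(t_x)$ is locally integrable away from the endpoints, so the hyperplane $t_a=t_b$ has Lebesgue measure zero in the ambient product space and contributes zero to the iterated integral. Hence the claim, as well as admissibility, follows, and no serious obstacle arises beyond correctly tracking the transitive closure in the definition of $X^b_a$ and $X^a_b$.
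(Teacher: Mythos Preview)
The paper states this proposition without proof, treating it as a standard property inherited from Yamamoto's work \cite{Y2014} on 2-posets (the sentence immediately following the proposition merely remarks that admissibility corresponds to convergence). Your argument is correct and is exactly the standard shuffle decomposition one would give: the admissibility check via containment of maximal/minimal elements, the partition of $\Delta_X$ by the hyperplane $t_a=t_b$, and the identification of the two open halves with $\Delta_{X^b_a}$ and $\Delta_{X^a_b}$ via transitive closure are all as they should be. There is nothing in the paper to compare against, so your proof simply fills in what the authors left implicit.
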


Note that the admissibility of a $3$-poset corresponds to
the convergence of the associated integral. We use Hasse diagrams to indicate $3$-posets, with vertices $\circ$ and ``$\bullet\ \sigma$" corresponding to $\delta(x)=0$ and $\delta(x)=\sigma\ (\sigma\in\{\pm 1\})$, respectively. For convenience, if $\sigma=1$, replace ``$\bullet\ 1$" by $\bullet$ and if $\sigma=-1$, replace ``$\bullet\ -1$" by ``$\bullet\ {\bar1}$".  For example, the diagram
\[\begin{xy}
{(0,-4) \ar @{{*}-o} (4,0)},
{(4,0) \ar @{-{*}} (8,-4)},
{(8,-4) \ar @{-o}_{\bar1} (12,0)},
{(12,0) \ar @{-o} (16,4)},
{(16,4) \ar @{-{*}} (24,-4)},
{(24,-4) \ar @{-o}_{\bar1} (28,0)},
{(28,0) \ar @{-o} (32,4)}
\end{xy} \]
represents the $3$-poset $X=\{x_1,x_2,x_3,x_4,x_5,x_6,x_7,x_8\}$ with order
$x_1<x_2>x_3<x_4<x_5>x_6<x_7<x_8$ and label
$(\delta_X(x_1),\dotsc,\delta_X(x_8))=(1,0,-1,0,0,-1,0,0)$. For composition $\bfk=(k_1,\dotsc,k_r)$ and $\bfsi\in\{\pm 1\}^r$ (admissible or not),
we write
\[\begin{xy}
{(0,-3) \ar @{{*}.o} (0,3)},
{(1,-3) \ar @/_1mm/ @{-} _{(\bfk,\bfsi)} (1,3)}
\end{xy}\]
for the `totally ordered' diagram:
\[\begin{xy}
{(0,-24) \ar @{{*}-o}_{\sigma_1} (4,-20)},
{(4,-20) \ar @{.o} (10,-14)},
{(10,-14) \ar @{-} (14,-10)},
{(14,-10) \ar @{.} (20,-4)},
{(20,-4) \ar @{-{*}} (24,0)},
{(24,0) \ar @{-o}_{\sigma_{r-1}}(28,4)},
{(28,4) \ar @{.o} (34,10)},
{(34,10) \ar @{-{*}} (38,14)},
{(38,14) \ar @{-o}_{\sigma_r} (42,18)},
{(42,18) \ar @{.o} (48,24)},
{(0,-23) \ar @/^2mm/ @{-}^{k_1} (9,-14)},
{(24,1) \ar @/^2mm/ @{-}^{k_{r-1}} (33,10)},
{(38,15) \ar @/^2mm/ @{-}^{k_r} (47,24)}
\end{xy} \]
If $k_i=1$, we understand the notation $\begin{xy}
{(0,-5) \ar @{{*}-o}_{\sigma_i} (4,-1)},
{(4,-1) \ar @{.o} (10,5)},
{(0,-4) \ar @/^2mm/ @{-}^{k_i} (9,5)}
\end{xy}$ as a single $\bullet\ {\sigma_i}$.
We see from \eqref{bb5}
\begin{align}\label{5.19}
I\left(\ \begin{xy}
{(0,-3) \ar @{{*}.o} (0,3)},
{(1,-3) \ar @/_1mm/ @{-} _{(\bfk,\bfsi)} (1,3)}
\end{xy}\right)=\frac{T(k_1,\dotsc,k_r;\sigma_1/\sigma_2,\dotsc,\sigma_{r-1}/\sigma_r,\sigma_r)}{\sigma_1\sigma_2\cdots\sigma_r}.
\end{align}

Recall from \cite{KTA2018,KTA2019} that the Kaneko--Tsumura $\psi$-function is defined by
\begin{align}\label{DefKT-psi}
\psi(k_1,k_2\ldots,k_r;s):=\frac{1}{\Gamma(s)} \int\limits_{0}^\infty \frac{t^{s-1}}{\sinh(t)}{\rm A}({k_1,\dotsc,k_r};\tanh(t/2))\, dt\quad (\Re(s)>0),
\end{align}
where $k_1,\dotsc,k_r$ are positive integers and for $(k_r,x)\neq (1,1)$
\begin{align}\label{DefKT-A}
&{\rm A}(k_1,\dotsc,k_r;x): = 2^r\sum\limits_{1 \le {n_1} <  \cdots  < {n_r}\atop n_i\equiv i\ {\rm mod}\ 2} {\frac{{{x^{{n_r}}}}}{{n_1^{{k_1}}n_2^{{k_2}} \cdots n_r^{{k_r}}}}},\quad x \in \left[ { - 1,1} \right].
\end{align}
Next, we define a alternating version of Kaneko--Tsumura $\psi$-function and we call it Kaneko--Tsumura $\bar\psi$-function.
\begin{defn} For $\bfk=(k_1,\dotsc,k_r)\in\N^r$ and $\Re(s)>0$, we define the Kaneko--Tsumura $\bar\psi$-function by
\begin{align}\label{DefKT-Apsi}
\bar\psi(k_1,k_2\ldots,k_r;s):=\frac{1}{\Gamma(s)} \int\limits_{0}^\infty \frac{t^{s-1}}{\sinh(t)}{\rm B}({k_1,\dotsc,k_r};\tanh(t/2))\, dt,
\end{align}
where for $x \in \left[ { - 1,1} \right]$,
\begin{align*}
{\rm B}(k_1,\dotsc,k_r;x): =
\left\{
\begin{array}{ll} (-1)^m \mathrm{i} {\rm A}(k_1,\dotsc,k_{2m-1};\mathrm{i}x),
   &\quad \hbox{if $r=2m-1$};  \\
   (-1)^m{\rm A}(k_1,\dotsc,k_{2m};\mathrm{i}x), &\quad\hbox{if $r=2m$}.   \\
\end{array}
\right.
\end{align*}
\end{defn}
By an elementary calculation, we obtain the iterated integral
\begin{align*}
{\rm B}(k_1,\dotsc,k_r;x)=\int_0^x \frac{2dt}{1+t^2}\left(\frac{dt}{t}\right)^{k_1-1}
\cdots\frac{2dt}{1+t^2}\left(\frac{dt}{t}\right)^{k_r-1}.
\end{align*}

According to definition and using the fact that if $x=\tanh(t/2)$ and $s=p+1\in\N$ then $dx/x=dt/\sinh(t)$ and $2dx/(1-x^2)=dt$ we deduce that
\begin{align}\label{BPP}
\bar\psi(k_1,k_2\ldots,k_r;p+1)=\frac{(-1)^p}{p!}\int\limits_{0}^1 \frac{\log^p\left(\frac{1-x}{1+x}\right){\rm B}(k_1,\dotsc,k_r;x)}{x}dx.
\end{align}
As an application, we can get the following theorem immediately.
\begin{thm} For $\bfk=(k_1,\dotsc,k_r)\in\N^r$ and integer $p\geq 0$, we have
\begin{equation*}
\bar \psi(\bfk;p+1)=\frac{1}{p!}\,I\left(\xybox{
{(0,-9) \ar @{{*}-o} (0,-4)},
{(0,-4) \ar @{.o} (0,4)},
{(0,4) \ar @{-o} (10,9)},
{(10,9) \ar @{-{*}} (6,-5)},
{(10,9) \ar @{-{*}} (16,-5)},
{(8,-5) \ar @{.} (14,-5)},
{(-1,-9) \ar @/^1mm/ @{-} ^{(\bfk,{\bf\bar1})} (-1,4)},
{(6,-6) \ar @/_1mm/ @{-} _{p} (16,-6)},
}\ \right)=I\left(\xybox{
{(0,-9) \ar @{{*}-o} (0,-4)},
{(0,-4) \ar @{.o} (0,4)},
{(0,4) \ar @{-o} (5,9)},
{(10,-9) \ar @{{*}-{*}} (10,-4)},
{(10,-4) \ar @{.{*}} (10,4)},
{(10,4) \ar @{-} (5,9)},
{(-1,-9) \ar @/^1mm/ @{-} ^{(\bfk,{\bf\bar1})} (-1,4)},
{(11,-9) \ar @/_1mm/ @{-} _{p} (11,4)},
}\ \right)
\end{equation*}
since there are exactly $p!$ ways to impose
a total order on the $p$ black vertices. Here ${\bf\bar1}:=(\{\bar 1\}_r)$.
\end{thm}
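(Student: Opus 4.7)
My plan is to establish the two equalities of the theorem separately: first, the direct identification $\bar\psi(\bfk;p+1)=I(\text{second diagram})$ obtained by unfolding the integral representation \eqref{BPP} into a product of iterated integrals sharing a common outer variable, and second, the combinatorial conversion $I(\text{first diagram})=p!\cdot I(\text{second diagram})$ via the shuffle principle of Proposition~\ref{prop:shuffl3poset}.

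For the first step, I would begin from \eqref{BPP},
\begin{equation*}
\bar\psi(\bfk;p+1)=\frac{(-1)^p}{p!}\int_0^1 \frac{\log^p\!\left(\frac{1-x}{1+x}\right)}{x}\,{\rm B}(\bfk;x)\,dx,
\end{equation*}
and use $\log\!\left(\frac{1-x}{1+x}\right)=-\int_0^x \om_1$ together with the standard identity $\bigl(\int_0^x \om_1\bigr)^{p}=p!\int_0^x \om_1^p$ (the latter being an iterated integral of $p$ copies), to obtain $\log^p\!\left(\frac{1-x}{1+x}\right)=(-1)^p\,p!\int_0^x \om_1^p$. Combining this with ${\rm B}(\bfk;x)=\int_0^x \om_{-1}\om_0^{k_1-1}\cdots \om_{-1}\om_0^{k_r-1}$ and $dx/x=\om_0(x)$, the two $(-1)^p$'s and the $1/p!$ cancel, leaving
\begin{equation*}
\bar\psi(\bfk;p+1)=\int_0^1 \om_0(x)\cdot\left(\int_0^x \om_1^p\right)\cdot {\rm B}(\bfk;x).
\end{equation*}
This triple product is precisely the 3-poset integral of the second Hasse diagram: the top $\circ$ encodes the outermost $\om_0(x)$, while the two parallel inner integrals---the $(\bfk,{\bf\bar1})$-chain and the totally ordered chain of $p$ black vertices---both range from $0$ to $x$, matching the order structure of that diagram.

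For the second step, in the first diagram the $p$ black vertices are mutually incomparable (each is only required to lie below the top $\circ$). I would apply Proposition~\ref{prop:shuffl3poset} repeatedly to introduce one order relation at a time between pairs of these vertices; after $p-1$ iterations the integral decomposes as a sum over all total orderings of the $p$ black vertices. Because every such vertex carries the same label $+1$, all $p!$ resulting 3-posets are identical---each equal to the second Hasse diagram---so
\begin{equation*}
I(\text{first diagram})=p!\cdot I(\text{second diagram})=p!\cdot\bar\psi(\bfk;p+1).
\end{equation*}

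The main (though mild) obstacle is the bookkeeping of signs and factorials: one must check that the $(-1)^p$ from \eqref{BPP} combines with the $(-1)^p$ from $\log\!\left(\frac{1-x}{1+x}\right)=-\int_0^x \om_1$, and that the $1/p!$ in \eqref{BPP} is cancelled by the $p!$ coming from expanding the $p$-th power as an ordered iterated integral. Once this accounting is verified and the outer variable $x$ is correctly identified with the unique maximal vertex of each 3-poset, both equalities follow from the definitions and the already-proven shuffle property.
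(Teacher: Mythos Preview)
Your proposal is correct and follows essentially the same route as the paper: the paper treats the first equality as immediate from the integral representation \eqref{BPP} (together with the iterated-integral expansion of $\log^p\!\big(\tfrac{1-x}{1+x}\big)$ and of ${\rm B}(\bfk;x)$), and justifies the second equality by the one-line observation that there are exactly $p!$ total orderings of the $p$ identical $\omega_1$-vertices, which is precisely your repeated application of Proposition~\ref{prop:shuffl3poset}. Your sign and factorial bookkeeping is also correct.
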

By Proposition~\ref{prop:shuffl3poset} this implies the result that $\bar\psi(k_1,\dotsc,k_r;p+1)$ can be expressed as a finite sum of AMTVs. For example,
\begin{align*}
\bar\psi(1,2;2)=T(1,\bar 2,2)+T(1,\bar1,3)+T(\bar1,\bar1,\bar3)+T(\bar1,1,\bar3).
\end{align*}

\begin{thm}\label{thm-BPT}  For positive integers $m$ and $p$,
\begin{align*}
&\begin{aligned}
\bar\psi(\bfk_{2m-1};2p)=(-1)^m2\sum_{j=0}^{p-1}{\bar\z}(2p-1-2j)T(\bfk_{2m-1}\bar\circledast \{1\}_{2j+1})+(-1)^mT(\bfk_{2m-1}\bar\circledast \{1\}_{2p}),
\end{aligned}\\
&\begin{aligned}
\bar\psi(\bfk_{2m-1};2p+1)=(-1)^m2\sum_{j=0}^{p}{\bar\z}(2p-2j)T(\bfk_{2m-1}\bar\circledast \{1\}_{2j+1}),
\end{aligned}\\
&\begin{aligned}
\bar\psi(\bfk_{2m};2p)=(-1)^m2\sum_{j=0}^{p-1}{\bar\z}(2p-2-2j)T(\bfk_{2m}\bar\circledast \{1\}_{2j+2}),
\end{aligned}\\
&\begin{aligned}
\bar\psi(\bfk_{2m};2p+1)=(-1)^m2\sum_{j=0}^{p-1}{\bar\z}(2p-1-2j)T(\bfk_{2m}\bar\circledast \{1\}_{2j+2})+(-1)^mT(\bfk_{2m}\bar\circledast \{1\}_{2p+1}).
\end{aligned}
\end{align*}
\end{thm}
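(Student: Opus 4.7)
My approach is to start from the integral representation \eqref{BPP},
\[\bar\psi(\bfk;p+1)=\frac{(-1)^p}{p!}\int_0^1\frac{\log^p\tt\,{\rm B}(\bfk;x)}{x}\,dx,\]
and reduce the theorem to a term-by-term integration after expanding ${\rm B}(\bfk;x)/x$ as an explicit Taylor series. The parity split in the definition of ${\rm B}$, together with the parity of $s-1$, is exactly what will generate the four sub-cases in the theorem.

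For the expansion of ${\rm B}$, I first treat the odd-length case. Starting from ${\rm B}(\bfk_{2m-1};x)=(-1)^m\mathrm{i}\,{\rm A}(\bfk_{2m-1};\mathrm{i}x)$ and writing $n_{2m-1}=2n-1$ (odd), the inner sum over $(n_1,\dots,n_{2m-2})$ with $n_j\equiv j\pmod 2$ matches, under $n_{2j-1}=2a_j-1$ and $n_{2j}=2b_j$, the domain $D_{n,2m-2}$ from the definition of the multiple $T$-harmonic sum. Collecting the prefactor $(-1)^m\mathrm{i}\cdot 2^{2m-1}$, the factor $\mathrm{i}^{2n-1}=\mathrm{i}(-1)^{n-1}$, and the rescaling $T_n(\bfk_{2m-2})/2^{2m-2}$ of the inner sum, I obtain
\[{\rm B}(\bfk_{2m-1};x)=2(-1)^m\sum_{n\geq m}\frac{(-1)^n T_n(\bfk_{2m-2})}{(2n-1)^{k_{2m-1}}}\,x^{2n-1}.\]
The even-length case is strictly parallel, using $n_{2m}=2n$ and the domain $D_{n,2m-1}$, and yields
\[{\rm B}(\bfk_{2m};x)=2(-1)^m\sum_{n>m}\frac{(-1)^n T_n(\bfk_{2m-1})}{(2n)^{k_{2m}}}\,x^{2n}.\]

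Dividing these expansions by $x$ and substituting into \eqref{BPP} reduces $\bar\psi(\bfk;p+1)$ to a sum over $n$ of one of the four integrals evaluated in Lemma~\ref{lem1}. The parity of $r$ decides between $t^{2n-2}$ and $t^{2n-1}$, while the parity of $p=s-1$ decides between $\log^{2k-1}$ and $\log^{2k}$; concretely $\bar\psi(\bfk_{2m-1};2p)$ is obtained from \eqref{eo}, $\bar\psi(\bfk_{2m-1};2p+1)$ from \eqref{ee}, $\bar\psi(\bfk_{2m};2p)$ from \eqref{oo}, and $\bar\psi(\bfk_{2m};2p+1)$ from \eqref{oe}. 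Each evaluation produces a finite linear combination over $j$ of $T_n(\{1\}_{\ast})$, which glues with the leading $T_n(\bfk_{2m-2})$ or $T_n(\bfk_{2m-1})$ to form exactly one of the alternating convoluted $T$-values in \eqref{Aee}--\eqref{Aoe}, plus (in two of the four cases) a single boundary term involving $S_n(\{1\}_{\ast})$ that yields the trailing summand. A standard reindexing $j\mapsto p-j$ aligns the arguments of $\bar\z$ with the form in the theorem.

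The only real obstacle is careful bookkeeping: tracking the interplay between the parity conventions $n_j\equiv j\pmod 2$ in ${\rm A}$ and the inequality conventions that distinguish $D_{n,\cdot}$ from $E_{n,\cdot}$, and keeping the global factors of $(-1)^m$, $2^r$, $\mathrm{i}^{n_r}$ and $(-1)^p/p!$ straight. No new ingredients are needed beyond the two series expansions above, Lemma~\ref{lem1}, and the definitions \eqref{Aee}--\eqref{Aoe}, so the proof will be a systematic case-by-case verification.
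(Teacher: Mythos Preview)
Your proposal is correct and follows exactly the route the paper indicates: the paper's proof consists of the single sentence ``This follows immediately from the identity \eqref{BPP} and Lemma~\ref{lem1},'' and you have supplied precisely the details omitted there, namely the Taylor expansion of ${\rm B}(\bfk;x)/x$ via the multiple $T$-harmonic sums, the term-by-term application of the four cases of Lemma~\ref{lem1}, and the identification of the resulting $n$-sums with the alternating convoluted $T$-values \eqref{Aee}--\eqref{Aoe}. Your matching of the four parity cases with \eqref{ee}--\eqref{oo} is correct.
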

\begin{proof}
This follows immediately from the identity \eqref{BPP} and Lemma \ref{lem1}. We leave the detail to the interested reader.
\end{proof}

Comparing Theorem \ref{thm-WST} and Theorem \ref{thm-BPT} with $k_1=\cdots=k_{r-1}=1$ and $k_r=l$, we obtain the following theorem.
\begin{thm}
For any $l,r\in\N$ and $p\in\N_0$,
\begin{equation*}
\bar\psi(\{1\}_{r-1},l;p+1)=(-1)^r W_l(p+r+l,r+l).
\end{equation*}
\end{thm}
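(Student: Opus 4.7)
The statement is an immediate corollary of Theorems \ref{thm-WST} and \ref{thm-BPT}: the idea is to substitute $\bfk=(\{1\}_{r-1},l)$ into Theorem \ref{thm-BPT} and match the resulting linear combination of alternating convoluted $T$-values against the expression for $W_l$ given by Theorem \ref{thm-WST}. The only non-trivial bookkeeping is that both theorems are split by parity, so we would split into four cases according to the parities of $r$ and of $p+1$.

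First, I would handle the case $r=2m$. If $p+1=2p'$ is even, set $\bfk_{2m}=(\{1\}_{2m-1},l)$; Theorem \ref{thm-BPT} gives
\[
\bar\psi(\{1\}_{2m-1},l;2p')=(-1)^m\,2\sum_{j=0}^{p'-1}\bar\z(2p'-2-2j)\,T\Big((\{1\}_{2m-1},l)\bar\circledast\{1\}_{2j+2}\Big),
\]
which coincides with formula \eqref{b20} of Theorem \ref{thm-WST} for $W_l(2p'+2m+l-1,2m+l)$; since $p+r+l=2p'+2m+l-1$ and $(-1)^r=1$, this is what we want. If instead $p+1=2p'+1$ is odd, I would invoke the odd-argument formula for $\bar\psi(\bfk_{2m};2p+1)$ and compare with \eqref{b21} applied with index $p'+1$ (so that $2(p'+1)+2m+l-2=p+r+l$); the two expressions match term by term.

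Next, for $r=2m-1$ with $\bfk_{2m-1}=(\{1\}_{2m-2},l)$, I would similarly compare: when $p+1$ is even one matches $\bar\psi(\bfk_{2m-1};2p')$ from Theorem \ref{thm-BPT} against \eqref{b22} of Theorem \ref{thm-WST}, noting that $(-1)^m=-(-1)^{m-1}=(-1)^r\cdot(-1)^{m-1}$ absorbs the sign; when $p+1$ is odd one matches $\bar\psi(\bfk_{2m-1};2p'+1)$ against \eqref{b23} applied with index $p'+1$. In each case the convoluted $T$-values and the coefficients $\bar\z(\cdot)$ appear with identical indices, and the overall sign works out to $(-1)^r$ as claimed.

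There is essentially no obstacle here beyond careful index tracking: the substantive analytic work has already been done in Theorems \ref{thm-WST} and \ref{thm-BPT}, both of which rest on the iterated-integral representations \eqref{b8} and \eqref{BPP} together with the evaluations in Lemma \ref{lem1}. The trickiest point will be verifying that the limits of summation and the arguments $2p'-2-2j$, $2p'-1-2j$, etc., align correctly after the shift $p\mapsto p'+1$ used in sub-cases 1b and 2b; once this is checked, the identity follows by inspection in all four parity combinations.
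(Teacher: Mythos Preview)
Your proposal is correct and follows exactly the approach the paper indicates: the paper's entire proof is the single sentence ``Comparing Theorem~\ref{thm-WST} and Theorem~\ref{thm-BPT} with $k_1=\cdots=k_{r-1}=1$ and $k_r=l$, we obtain the following theorem,'' and your four-case parity breakdown is precisely the bookkeeping that makes that comparison explicit. The only minor point to watch is that Theorem~\ref{thm-BPT} is stated for $p\ge 1$, so the boundary case $p'=0$ (i.e., $p+1=1$) technically falls outside its hypotheses; however, the formulas there extend to $p'=0$ by the same derivation from \eqref{BPP} and Lemma~\ref{lem1}, so this is harmless.
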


Similar to \cite[Theorem~5.3 and 5.4]{XZ2020}, using Lemma \ref{lemc1} and Theorem \ref{thm-BPT}, we have the following theorems.
\begin{thm}\label{thm-bTZA} For any $\bfk_{2m-1}\in\N^{2m-1}$ and $p\in\N$,
\begin{equation*}
T(\bfk_{2m-1}\bar\circledast \{1\}_{2p+1})=(-1)^m\sum_{j=1}^p \alpha_{p-j}\Big(\bar\psi(\bfk_{2m-1};2j+1)+2\ol{ \zeta}(2j)T(\bfk_{2m-2},\ol{k_{2m-1}+1}) \Big).
\end{equation*}
\end{thm}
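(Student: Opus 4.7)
The plan is to mirror the strategy that yielded \eqref{c3}: combine Theorem~\ref{thm-BPT} with the triangular inversion of Lemma~\ref{lemc1}, after first pinning down the boundary term coming from the innermost index $j=0$. First, I would split off the $j=0$ summand in the second identity of Theorem~\ref{thm-BPT}, writing
\begin{equation*}
(-1)^m\bar\psi(\bfk_{2m-1};2p+1)=2\ol{\zeta}(2p)\,T(\bfk_{2m-1}\bar\circledast(1))+2\sum_{j=1}^{p}\ol{\zeta}(2p-2j)\,T(\bfk_{2m-1}\bar\circledast\{1\}_{2j+1}).
\end{equation*}
Next, I would unpack $T(\bfk_{2m-1}\bar\circledast(1))$ via the defining series \eqref{Aoo}, obtaining $2\sum_{n\ge 1}(-1)^n T_n(\bfk_{2m-2})/(2n-1)^{k_{2m-1}+1}$, and match it against the series definition of $T(\bfk_{2m-2},\ol{k_{2m-1}+1})$ through the change of variables $a_j=n_j-\lfloor j/2\rfloor$. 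This substitution carries the strictly increasing chain $0<n_1<\cdots<n_{2m-1}$ of the AMTV into the alternating chain $0<a_1\le a_2<a_3\le\cdots\le a_{2m-1}$ defining $D_{n,2m-1}$, converts each $(2n_j-j)$ into $(2a_j-1)$ or $(2a_j)$ according as $j$ is odd or even, and produces the sign $(-1)^{n_{2m-1}}=(-1)^{m-1}(-1)^{a_{2m-1}}$. Collecting terms yields
\begin{equation*}
T(\bfk_{2m-1}\bar\circledast(1))=(-1)^{m-1}\,T(\bfk_{2m-2},\ol{k_{2m-1}+1}).
\end{equation*}

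Now set $B_j:=T(\bfk_{2m-1}\bar\circledast\{1\}_{2j+1})$ and
\begin{equation*}
C_p:=(-1)^m\Big(\bar\psi(\bfk_{2m-1};2p+1)+2\ol{\zeta}(2p)\,T(\bfk_{2m-2},\ol{k_{2m-1}+1})\Big).
\end{equation*}
Substituting the identification from the previous step into the displayed expansion, the two $\ol{\zeta}(2p)\,T(\bfk_{2m-2},\ol{k_{2m-1}+1})$ contributions cancel exactly, leaving the triangular system
\begin{equation*}
C_p=\sum_{j=1}^{p}A_{j,p}\,B_j,\qquad A_{j,p}:=2\ol{\zeta}(2p-2j),\qquad A_{p,p}=2\ol{\zeta}(0)=1.
\end{equation*}
Applying Lemma~\ref{lemc1} then gives $B_p=\sum_{j=1}^{p}Z(j,p)\,C_j$ for the nested alternating sum $Z(j,p)$ appearing in the proof of \eqref{c3}; invoking \cite[Proposition~5.2]{XZ2020}, which asserts $Z(j,j+n)=\alpha_n$ for every $n\in\N_0$, reduces the right-hand side to $\sum_{j=1}^{p}\alpha_{p-j}\,C_j$, which is exactly the claimed identity.

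The main obstacle is the second step: correctly identifying $T(\bfk_{2m-1}\bar\circledast(1))$ with $\pm T(\bfk_{2m-2},\ol{k_{2m-1}+1})$ through the asymmetric index set $D_{n,2m-1}$ and the half-integer offsets $\lfloor j/2\rfloor$. This is the sole place where the parity of $m$ intervenes, and the sign $(-1)^{m-1}$ produced there must combine with the prefactor $(-1)^m$ from Theorem~\ref{thm-BPT} so as to yield the uniform $(-1)^m$ on the right-hand side of the target identity. Once that identification is secured, the remainder of the argument is a mechanical repetition of the triangular inversion already carried out for \eqref{c3}.
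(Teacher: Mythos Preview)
Your proposal is correct and follows exactly the approach the paper intends: the paper omits the proof, stating it is ``completely similar to the proofs of \cite[Theorem~5.3 and 5.4]{XZ2020}'', which is precisely the triangular inversion via Lemma~\ref{lemc1} combined with Theorem~\ref{thm-BPT} and \cite[Proposition~5.2]{XZ2020} that you carry out. Your explicit verification of the boundary identity $T(\bfk_{2m-1}\bar\circledast(1))=(-1)^{m-1}T(\bfk_{2m-2},\ol{k_{2m-1}+1})$ via the index shift $a_j=m_j-\lfloor j/2\rfloor$ is the one step the paper leaves implicit, and your sign bookkeeping there is accurate.
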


\begin{thm}\label{thm-bTZB} For any $\bfk_{2m}\in\N^{2m}$ and $p\in\N$,
\begin{equation*}
T(\bfk_{2m}\bar\circledast \{1\}_{2p})=(-1)^m\sum_{j=1}^p \alpha_{p-j}\bar\psi(\bfk_{2m};2j).
\end{equation*}
\end{thm}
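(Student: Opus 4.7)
The plan is to invert the formula for $\bar\psi(\bfk_{2m};2p)$ supplied by Theorem \ref{thm-BPT} by applying Lemma \ref{lemc1}, in exact parallel with the derivations of \eqref{c3} and \eqref{c4} given earlier in this section. Starting from the third identity of Theorem \ref{thm-BPT},
\begin{equation*}
(-1)^m \bar\psi(\bfk_{2m};2p) = 2\sum_{j=0}^{p-1} \bar\zeta(2p-2-2j)\, T(\bfk_{2m}\bar\circledast\{1\}_{2j+2}),
\end{equation*}
the reindexing $j\mapsto j-1$ turns this into the triangular system
\begin{equation*}
(-1)^m \bar\psi(\bfk_{2m};2p) = 2\sum_{j=1}^{p} \bar\zeta(2p-2j)\, T(\bfk_{2m}\bar\circledast\{1\}_{2j}),
\end{equation*}
in which the diagonal coefficient equals $2\bar\zeta(0)=1$ by the convention fixed in Lemma \ref{lem1}.

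Next I would apply Lemma \ref{lemc1} with the assignments
\begin{equation*}
A_{j,p}:=2\bar\zeta(2p-2j),\qquad B_j:=T(\bfk_{2m}\bar\circledast\{1\}_{2j}),\qquad C_p:=(-1)^m \bar\psi(\bfk_{2m};2p).
\end{equation*}
Since $A_{p,p}=1$, the hypothesis of the lemma is met, and its conclusion yields
\begin{equation*}
T(\bfk_{2m}\bar\circledast\{1\}_{2p})=\sum_{j=1}^{p}(-1)^m \bar\psi(\bfk_{2m};2j)\,Z(j,p),
\end{equation*}
where $Z(j,p)$ is precisely the chain-sum quantity
\begin{equation*}
Z(j,p)=\sum_{k=1}^{p-j}(-2)^k\sum\limits_{\substack{i_0<i_1<\cdots<i_k\\ i_0=j,\,i_k=p}} \prod_{l=1}^k \bar\zeta(2i_l-2i_{l-1}),\qquad Z(p,p):=1,
\end{equation*}
that already appeared in the proof of \eqref{c3}. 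The closing step invokes Proposition~5.2 of \cite{XZ2020}, which evaluates $Z(j,j+n)=\alpha_n$ for all $j\in\N$ and $n\in\N_0$; substituting $n=p-j$ produces the desired identity.

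The main (and essentially only) subtle point is the normalization: one must verify that after the reindexing the sum begins at $j=1$ with $A_{p,p}=1$, so that Lemma \ref{lemc1} applies verbatim. Apart from that, the argument is a bookkeeping exercise following the template used three times earlier in this section, with $\bar\psi(\bfk_{2m};2j)$ playing the role that $W_l$ played in the derivations of \eqref{c3} and \eqref{c4}.
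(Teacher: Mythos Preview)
Your proposal is correct and follows essentially the same approach as the paper: the paper states that the proof is ``completely similar to the proofs of \cite[Theorem~5.3 and 5.4]{XZ2020}'' and omits it, and your argument---inverting the third identity of Theorem~\ref{thm-BPT} via Lemma~\ref{lemc1} and then invoking \cite[Proposition~5.2]{XZ2020} to identify $Z(j,p)=\alpha_{p-j}$---is precisely that template, exactly as it was executed for \eqref{c3} earlier in the section.
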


The proofs of Theorems \ref{thm-bTZA} and \ref{thm-bTZB} are completely similar to the proofs of \cite[Theorem~5.3 and 5.4]{XZ2020}) and are thus omitted. Using Theorems \ref{thm-BPT}, \ref{thm-bTZA} and \ref{thm-bTZB},
we can also evaluate explicitly the alternating convoluted $T$-values $T(\bfk_{2m-1}\bar\circledast \{1\}_{2p})$ and $T(\bfk_{2m}\bar\circledast \{1\}_{2p+1})$ in terms of Kaneko--Tsumura $\bar\psi$-values and (alternating) Riemann zeta values. This implies the following result.

\begin{cor}\label{cor:convolutedT}
For $\bfk=(k_1,\ldots,k_r)\in\N^r$ and positive integer $p$, the alternating convoluted $T$-value $T(\bfk\bar\circledast\{1\}_p)$ can be expressed as a $\Z$-linear combination of products of alternating {\rm MTVs} and alternating Riemann zeta values.
\end{cor}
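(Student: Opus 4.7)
The plan is to reduce every alternating convoluted $T$-value $T(\bfk\bar\circledast\{1\}_p)$ to a finite combination of Kaneko--Tsumura $\bar\psi$-values and products of alternating Riemann zeta values with AMTVs, and then to invoke the fact, already recorded in the paragraph preceding Theorem~\ref{thm-BPT}, that each $\bar\psi$-value is itself a finite $\Z$-linear combination of AMTVs. That fact is a direct consequence of the $3$-poset representation $\bar\psi(\bfk;p+1)=\frac{1}{p!}I(\cdots)$ together with Proposition~\ref{prop:shuffl3poset}, and I would simply quote it.

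With that reduction in hand, I would split the argument according to the parities of the depth $r$ of $\bfk$ and of $p$. For the two ``parity-matched'' cases, Theorems~\ref{thm-bTZA} and \ref{thm-bTZB} already provide the desired expressions directly: each right-hand side is a finite combination of $\bar\psi$-values and of $\bar\z(2j)$ times an explicit AMTV, scaled by the factors $\alpha_{p-j}=(-1)^{p-j}\pi^{2(p-j)}/(2(p-j)+1)!$. Since Euler's evaluation of $\zeta(2n)$ expresses $\pi^{2n}$ as a rational multiple of the alternating Riemann zeta value $\bar\z(2n)$, these factors are themselves of the required shape.

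For the two ``parity-mismatched'' cases I would invert the corresponding identities of Theorem~\ref{thm-BPT}. In its first identity the target value $T(\bfk_{2m-1}\bar\circledast\{1\}_{2q})$ appears isolated with coefficient $(-1)^m$, so I can solve for it:
\begin{equation*}
T(\bfk_{2m-1}\bar\circledast\{1\}_{2q})=(-1)^m\bar\psi(\bfk_{2m-1};2q)-2\sum_{j=0}^{q-1}\bar\z(2q-1-2j)\,T(\bfk_{2m-1}\bar\circledast\{1\}_{2j+1}).
\end{equation*}
All convoluted $T$-values remaining on the right are now of matched parity and hence fall under Theorem~\ref{thm-bTZA}; the boundary term $j=0$ collapses to the single AMTV $T(\bfk_{2m-2},\ol{k_{2m-1}+1})$ directly from the series definition \eqref{Aoo}. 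The remaining case $(r,p)=(2m,2q+1)$ is handled identically by inverting the fourth identity of Theorem~\ref{thm-BPT} to express $T(\bfk_{2m}\bar\circledast\{1\}_{2q+1})$ in terms of $\bar\psi(\bfk_{2m};2q+1)$ and convoluted $T$-values $T(\bfk_{2m}\bar\circledast\{1\}_{2j+2})$ with $0\le j\le q-1$, the latter of which are handled by Theorem~\ref{thm-bTZB}.

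Assembling the four cases yields the corollary. I do not anticipate a substantive obstacle: the analytic content (the $3$-poset shuffle and Lemma~\ref{lem1}) has already been absorbed into Theorems~\ref{thm-BPT}, \ref{thm-bTZA} and \ref{thm-bTZB}, and what remains is only the parity-split bookkeeping and a clean triangular inversion. The one cosmetic point to verify is that the $\alpha_{p-j}$-scalars legitimately count as ``products of alternating Riemann zeta values'', which is the Euler evaluation noted above.
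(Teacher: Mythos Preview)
Your approach is essentially the paper's own: the text immediately preceding the corollary says exactly that Theorems~\ref{thm-BPT}, \ref{thm-bTZA} and \ref{thm-bTZB} handle all four parity cases, and the $3$-poset argument before Theorem~\ref{thm-BPT} reduces each $\bar\psi$-value to a $\Z$-linear combination of AMTVs.

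One small correction on your ``cosmetic point'': to obtain the \emph{$\Z$-linear} combination asserted in the corollary (not merely $\Q$-linear), do not pass through Euler's evaluation of $\zeta(2n)$, which introduces rational denominators. Instead use the identity already established in the proof of \eqref{c3}, namely
\[
\alpha_{p-j}=Z(j,p)=\sum_{k=1}^{p-j}(-2)^k\sum_{\substack{i_0<i_1<\cdots<i_k\\ i_0=j,\ i_k=p}}\prod_{l=1}^k \bar\zeta(2i_l-2i_{l-1}),
\]
which exhibits each $\alpha_n$ directly as a $\Z$-linear combination of products of alternating Riemann zeta values. With this replacement your argument goes through verbatim and matches the paper.
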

Now, we end this section by the following theorem.
\begin{thm}\label{TV-thm} For any positive integers $l_1,l_2$ and $\bfk_{m}\in \N^m$,
\begin{align*}
T(\bfk_{m}\bar\circledast (l_1,l_2))=f(\bfk_m,l_1,l_2)+(-1)^{[(m+1)/2]}
I\left(
\raisebox{12pt}{\begin{xy}
{(-3,-18) \ar @{{*}-}_{\bar1} (0,-15)},
{(0,-15) \ar @{{o}.} (3,-12)},
{(3,-12) \ar @{{o}.} (9,-6)},
{(9,-6) \ar @{{*}-}_{\bar1} (12,-3)},
{(12,-3) \ar @{{o}.} (15,0)},
{(15,0) \ar @{{o}-} (18,3)},
{(18,3) \ar @{{o}-} (21,6)},
{(21,6) \ar @{{o}.} (24,9)},
{(24,9) \ar @{{o}-} (27,3)},
{(27,3) \ar @{{*}-} (30,6)},
{(30,6) \ar @{{o}.} (33,9)},
{(33,9) \ar @{{o}-} },
{(-3,-17) \ar @/^1mm/ @{-}^{k_1} (2,-12)},
{(9,-5) \ar @/^1mm/ @{-}^{k_{m}} (14,0)},
{(18,4) \ar @/^1mm/ @{-}^{l_2} (23,9)},
{(28,3) \ar @/_1mm/ @{-}_{l_{1}} (33,8)}.
\end{xy}}
\right)
\end{align*}
where $f(\bfk_m,l_1,l_2)=0$ if $m$ is even and $f(\bfk_m,l_1,l_2)=2(-1)^{[(m+1)/2]}\bar\zeta(l_1)T(k_1,\dots,k_{m-1},\overline{k_{m}+l_2})$ if $m$ is odd.
\end{thm}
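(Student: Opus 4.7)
The plan is to realize $T(\bfk_m\bar\circledast(l_1,l_2))$ as a multiple integral associated with a 3-labeled poset and then compare with the stated expression. Starting from the defining series in (\ref{Aee})--(\ref{Aoe}) specialized to $\bfl=(l_1,l_2)$ (so $p=1$), I would rewrite each of the harmonic sums $T_n(\bfk_{m-1})$, $T_n(l_1)$ (when $m$ is even) or $S_n(l_1)$ (when $m$ is odd) as an iterated integral using Lemma~\ref{lem1} and the $\om$-calculus of Section~\ref{sec2}. The outer factor $(-1)^n/(2n-a)^{k_m+l_2}$ with $a\in\{0,1\}$ becomes the weighted bridge $\om_{-1}\om_0^{k_m+l_2-1}$ between the $\bfk_{m-1}$-chain and the two upper chains of total weights $l_1$ and $l_2$. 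After this unfolding, the summand lives on the Hasse diagram of the 3-poset displayed in the statement: a totally ordered chain $(\bfk_m,\bar 1)$ at the bottom and a Y-fork at the top whose two branches carry the $l_1$ and $l_2$ stretches and meet at a common maximal vertex.

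Once the integrand is in this form, Proposition~\ref{prop:shuffl3poset} and the identity (\ref{5.19}) translate the resulting simplex integral into exactly $I(X)$, where $X$ is the displayed 3-poset. The global sign $(-1)^{[(m+1)/2]}$ is accounted for by combining the factor $(-1)^r$ coming from (\ref{5.19}) applied to the all-bar chain $(\bfk_m,\bar 1)$ together with the signs produced by the parity of $m$ when converting the $(-1)^n$-alternating sum through Lemma~\ref{lem1}.

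The correction term $f(\bfk_m,l_1,l_2)$ arises only when $m$ is odd. In that case, expanding $S_n(l_1)$ via identities (\ref{oo}) or (\ref{oe}) of Lemma~\ref{lem1} separates the $l_1$ contribution into a ``$T_n$-type'' piece, which fits into the 3-poset integral $I(X)$, and a ``diagonal'' piece proportional to $\bar\zeta(l_1)$. Isolating the diagonal piece and recognizing its inner sum as $T(k_1,\dots,k_{m-1},\overline{k_m+l_2})$ via (\ref{bb5}) yields exactly $2(-1)^{[(m+1)/2]}\bar\zeta(l_1)T(k_1,\dots,k_{m-1},\overline{k_m+l_2})$. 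In the even $m$ case the expansion uses instead identity (\ref{ee}), which contains no such diagonal residue, so $f=0$.

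The main obstacle is the sign bookkeeping in the odd-$m$ case, together with the careful separation of the $\bar\zeta(l_1)$-boundary contribution from the 3-poset integral without double counting. A clean route is to first prove the even-$m$ identity (where $f=0$) as a warm-up, and then repeat the same computation for odd $m$, carefully accounting for the extra $S_n$-residual term in order to extract the precise form of $f$.
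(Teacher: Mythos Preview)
The paper omits the proof entirely, deferring to the analogous non-alternating result \cite[Theorem~4.5]{XZ2020}. Your high-level strategy---convert the series defining $T(\bfk_m\bar\circledast(l_1,l_2))$ into an iterated integral and identify it with the 3-poset integral $I(X)$---matches that scheme.

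The gap is your reliance on Lemma~\ref{lem1}. That lemma evaluates integrals of the shape $\int_0^1 t^{2n-c}\log^k\!\big(\tfrac{1-t}{1+t}\big)\,dt$ in terms of the particular harmonic sums $T_n(\{1\}_j)$, $S_n(\{1\}_j)$ and alternating zeta values; it gives no handle on $T_n(l_1)$ or $S_n(l_1)$ for a \emph{general} single index $l_1$, so it cannot be used to ``rewrite $T_n(l_1)$ or $S_n(l_1)$ as an iterated integral'', and the correction term $f$ does not arise from the residual $\bar\zeta$-pieces of \eqref{eo}/\eqref{oe} once $l_1>1$. A second issue is your reading of the Hasse diagram: the two branches do \emph{not} meet at a common maximal vertex. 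Only the minimal vertex of the $l_1$-branch (the one labelled $1$, contributing $\om_1$) sits below the top of the main chain, while the $l_1-1$ copies of $\om_0$ above it run independently up to $1$. This asymmetric attachment is exactly what makes the branch contribute a partial sum of the correct shape.

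The clean route, and the one actually used in \cite[Theorem~4.5]{XZ2020}, is to expand $I(X)$ directly: write each $\om_{\pm1}$ as a geometric series and integrate term by term. The totally ordered chain $(\bfk_m,{\bf\bar1})$ together with the $l_2$ extra $\om_0$'s produces the factor $(-1)^n T_n(\bfk_{m-1})/(2n-c)^{k_m+l_2}$, while the hanging $l_1$-branch yields, after comparison with the honest finite sum $T_n(l_1)$ or $S_n(l_1)$, the convoluted $T$-value plus---in the odd-$m$ case only---the boundary contribution $2(-1)^{[(m+1)/2]}\bar\zeta(l_1)T(k_1,\dots,k_{m-1},\overline{k_m+l_2})$. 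Lemma~\ref{lem1} plays no role.
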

\begin{proof}
The proof of Theorem \ref{TV-thm} is completely similar to the proof of \cite[Theorem~4.5]{XZ2020} and is thus omitted. We leave the detail to the interested reader.
\end{proof}

\subsection{Some Special Values of AMTVs and Weighted Sums}
Next, we consider some specific cases. Setting $l=m=1$ in \eqref{r2} yields
\begin{align}\label{c17}
T(\bar 1,\{1\}_{2p-2},\bar 1)=(-1)^{p-1} \sum_{j=1}^p  \alpha_{p-j} W_1(2j,2).
\end{align}
According to definition, we see that
\begin{align*}
W_1(2j,2)=\sum_{k_1+k_2=2j,\atop k_1,k_2\geq 1} T(\bar k_1,\bar k_2)
\end{align*}
and
\begin{align*}
T(\bar k_1,\bar k_2)=4\sum_{0<m<n} \frac{(-1)^{m+n}}{(2m-1)^{k_1}(2n-2)^{k_2}}=-\frac{1}{2^{k_1+k_2-2}}\sum_{n=1}^\infty \frac{\ol{ h}^{(k_1)}_n}{n^{k_2}}(-1)^{n-1},
\end{align*}
where $\ol{h}_n^{(p)}$ stands for alternating odd harmonic number of order $p$ defined by
\begin{align*}
{ \bar h}_n^{(p)}: = \sum\limits_{k=1}^n {\frac{(-1)^{k-1}}{{{(k-1/2)^p}}}},\quad \ol{h}_n\equiv\ol{h}^{(1)}_n,\quad \ol{ h}_0^{(p)}:=0.
\end{align*}
Note that the first author and Wang prove the following result in \cite[Corollary~3.4]{WX2020}:
\begin{align}\label{c18}
&(1+(-1)^{p+q})\su \frac{\ol{h}^{(p)}_n}{n^q}(-1)^{n-1}\nonumber\\
=&(-1)^p(1+(-1)^q)\ol{t}(p){\bar\z}(q)-(-1)^p\binom{p+q-1}{p-1}\ol{t}(p+q)\nonumber\\
&-(-1)^p\sum_{k=0}^{p-1} ((-1)^k-1)\binom{p+q-k-2}{q-1}{\t}(k+1)\ol{t}(p+q-k-1)\nonumber\\
&+2(-1)^p \sum_{j=1}^{[q/2]} \binom{p+q-2j-1}{p-1}{\z}(2j)\ol{t}(p+q-2j),
\end{align}
where $\t(p):=2^p t(p)=(2^p-1)\z(p)$ for $p>1$, and $\ol{t}(k)$ is the alternating $t$-value defined by
\begin{equation*}
\ol{t}(p):=\su \frac{(-1)^{n-1}}{(n-1/2)^p}\quad (p\in\N).
\end{equation*}
In particular, $\ol{t}(2)=4G$ where $G:=\sum_{n=1}^\infty (-1)^{n-1}/(2n-1)^2$ is Catalan's constant, and
$\ol{t}(2k+1)$ is related to the Euler number $E_{2k}$ by
\begin{equation*}
\ol{t}(2k+1)=\frac{(-1)^kE_{2k}\pi^{2k+1}}{2(2k)!}\quad (k\geq 0), \quad \text{where}\quad
\sec(x)=\sum\limits_{k=0}^{\infty} \frac{(-1)^kE_{2k}}{(2k)!}x^{2k}.
\end{equation*}
In particular, we have $E_0=1,E_2=-1,E_4=5,E_6=-61$ and $E_8=1385$.

Clearly,
\begin{equation*}
T(\bar k)=-\frac1{2^{k-1}}\ol{t}(k)\quad (k\in\N).
\end{equation*}
Therefore, applying \eqref{c18} we obtain
\begin{align}\label{c20}
W_1(2j,2)=-\frac{1}{2^{2j-2}}\sum_{k=1}^j \ol{t}(2k){\bar\z}(2j-2k).
\end{align}
Taking $j=1,2$ and $3$ we see that
\begin{align*}
W_1(2,2)=-\frac{1}{2}\ol{t}(2),\quad W_1(4,2)=-\frac1{8}\ol{t}(4)-\frac1{8}\ol{t}(2)\z(2)
\end{align*}
and
\begin{align*}
W_1(6,2)=-\frac{7}{128}\ol{t}(2)\z(4)-\frac{1}{32}\ol{t}(4)\z(2)-\frac1{32}\ol{t}(6).
\end{align*}
Further, plugging \eqref{c20} into \eqref{c17}, we can get the following theorem.
\begin{thm}\label{thmc6}
For any $p\in\N$, we have
\begin{equation}\label{c21}
T(\bar 1,\{1\}_{2p-2},\bar 1)
=(-1)^{p} \sum_{1\leq k\leq j\leq p} \frac{\ol{t}(2k)\ol{ \z}(2j-2k)\alpha_{j-p}}{2^{2j-2}}
=T(\{1\}_{2p-2},\bar 2).
\end{equation}
\end{thm}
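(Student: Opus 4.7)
The plan is to split the statement into its two equalities and dispatch each by a short manipulation using results already in hand. The first equality is a bookkeeping step and the second is an instance of the duality theorem from Section~2; the main subtlety throughout is keeping track of signs.

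For the first equality, I would simply substitute the closed form \eqref{c20} for $W_1(2j,2)$ into the specialization \eqref{c17} of Theorem~\ref{thm2}. This collapses to the double sum
\[
T(\bar 1,\{1\}_{2p-2},\bar 1)
=(-1)^{p-1}\sum_{j=1}^p \alpha_{p-j}\cdot\left(-\frac{1}{2^{2j-2}}\sum_{k=1}^{j}\ol t(2k)\ol{\z}(2j-2k)\right),
\]
after which reindexing gives the stated expression. No new ideas are required.

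For the second equality, I would invoke the duality theorem (Theorem~\ref{thm:dualityAMTV}) for AMTVs. Concretely, using \eqref{equ:integralAMTV}, I would first compute the iterated-integral representation of $T(\bar 1,\{1\}_{2p-2},\bar 1)$: with all $k_j=1$ and $\sigma_1=\sigma_{2p}=-1$, $\sigma_j=1$ otherwise, the products $\sigma'_j=\sigma_j\sigma_{j+1}\dotsm \sigma_{2p}$ satisfy $\sigma'_1=1$ and $\sigma'_j=-1$ for $2\le j\le 2p$, so that
\[
T(\bar 1,\{1\}_{2p-2},\bar 1)=-\int_0^1 \om_1 \om_{-1}^{2p-1}.
\]
Applying the substitution $t\mapsto (1-t)/(1+t)$, which swaps $\om_1\leftrightarrow\om_0$, fixes $\om_{-1}$ and reverses the order of the differential forms (as recorded in \eqref{equ:duality}), transforms this integral into $-\int_0^1 \om_{-1}^{2p-1}\om_0$.

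Finally, I would parse the word $\om_{-1}^{2p-1}\om_0$ back into an AMTV via \eqref{bb5}: reading the $r=2p-1$ blocks $\om_{\sigma_j}\om_0^{k_j-1}$ gives $\sigma_j=-1$ for every $j$, $k_j=1$ for $j<2p-1$, and $k_{2p-1}=2$. Since $\prod\sigma_j=(-1)^{2p-1}=-1$ and the ratios $\sigma_j/\sigma_{j+1}$ are all $+1$ except for $\sigma_r=-1$, we get $\int_0^1 \om_{-1}^{2p-1}\om_0 = -T(\{1\}_{2p-2},\bar 2)$, and thus $T(\bar 1,\{1\}_{2p-2},\bar 1)=T(\{1\}_{2p-2},\bar 2)$. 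The only real point of care is the interplay of the sign $\prod\sigma'_j$ appearing in \eqref{equ:integralAMTV} with the sign $\prod\sigma_j$ appearing in \eqref{bb5}; these are the two places where an error could easily creep in, and they must cancel to produce the clean equality.
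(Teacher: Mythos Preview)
Your proposal is correct and follows essentially the same approach as the paper: the first equality comes from combining \eqref{c17} with \eqref{c20}, and the second is an AMTV duality. The only cosmetic difference is that the paper simply cites the ready-made special case $T(\{1\}_{p-1},\ol{1},\{1\}_{r-1},\ol{1})=T(\{1\}_{r-1},\ol{p+1})$ from \cite[(4.6)]{CX2020}, whereas you derive it directly from Theorem~\ref{thm:dualityAMTV}; both are the same underlying substitution $t\mapsto(1-t)/(1+t)$.
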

\begin{proof}
The first equality follows immediately from \eqref{c17} and \eqref{c20}.
The second equality follows from the duality relation $
T(\{1\}_{p-1},\ol{1},\{1\}_{r-1},\ol{1})=T(\{1\}_{r-1},\ol{p+1})$ (see \cite[(4.6)]{CX2020}).
\end{proof}

Setting $p=2$ and $3$ we obtain
\begin{align*}
T(\bar 1,1,1,\bar 1)&\, =\frac1{8}\ol{t}(4)-\frac{3}{8}\ol{t}(2)\z(2)=T(1,1,\bar 2)\\
T(\bar 1,1,1,1,1,\bar 1)&\, =-\frac{15}{128}\ol{t}(2)\z(4)+\frac{3}{32}\ol{t}(4)\z(2)-\frac1{32}\ol{t}(6)=T(1,1,1,1,\bar 2).
\end{align*}
Moreover, in \cite[Theorem~2.3 and (4.10)]{CX2020}, the first author shows that
\begin{align}\label{c22}
T(\{1\}_{r-1},\bar k)=\sum_{j=1}^r (-1)^{j-1} T(\{1\}_{r-j-1},\bar 1)W(k+j-1,j),\\
\label{bc1}
W(k+r-1,r)=\sum_{j=1}^r (-1)^{j-1} T(\{1\}_{r-j-1},\bar 1)T(\{1\}_{j-1},\bar k),
\end{align}
where $k,r$ are positive integers and $(\{1\}_{-1},\ol{1}):=\emptyset$.
If we set $r=2p-1$ and $k=2$ in \eqref{c22}, and $r=2p$ and $k=2$  in \eqref{bc1}, then we get
\begin{align}\label{c23}
T(\{1\}_{2p-2},\bar 2)&=\sum_{j=1}^{2p-1} (-1)^{j-1}T(\{1\}_{2p-2-j},\bar 1) W(j+1,j)\nonumber\\
&=\sum_{j=1}^{p} T(\{1\}_{2p-2j-1},\bar 1) W(2j,2j-1) -\sum_{j=1}^{p-1} T(\{1\}_{2p-2j-2},\bar 1) W(2j+1,2j)
\end{align}
and
\begin{align}\label{bc2}
W(2p+1,2p)&=\sum_{j=1}^{2p} (-1)^{j-1} T(\{1\}_{2p-j-1},\bar 1)T(\{1\}_{j-1},\bar 2)\nonumber\\
&=\sum_{j=1}^{p}  T(\{1\}_{2p-2j},\bar 1)T(\{1\}_{2j-2},\bar 2) -\sum_{j=1}^{p} T(\{1\}_{2p-2j-1},\bar 1)T(\{1\}_{2j-1},\bar 2).
\end{align}
Furthermore, the first author proves in \cite[Corollary~2.5]{CX2020} that for any $p\in\N$, the weighted sums $W(2p+1,2p)$ can be expressed explicitly in terms of (alternating) Riemann zeta values by providing an explicit formula. Hence, applying \eqref{c21}, \eqref{c23} and the identity (see \cite[(2.15)]{CX2020})
\begin{align*}
{T}(\{1\}_{r-1},\bar 1)=\frac{(-1)^r}{r!}\left(\frac{\pi}{2}\right)^r,
\end{align*}
we arrive at the following result.
\begin{thm}\label{thmc7}
For any $p\in\N$, the two AMTVs $T(\bar 1,\{1\}_{2p-1},\bar 1)$ and $T(\{1\}_{2p-1},\ol{ 2})$, and the weighted sums $W(2p,2p-1)$ can be expressed as a $\Q$-linear combination of
products of the alternating $t$-values and the Riemann zeta values.
\end{thm}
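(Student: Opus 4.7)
The plan is to combine the duality relation \cite[(4.6)]{CX2020} with a simultaneous induction on $p$, bootstrapping from Theorem~\ref{thmc6} and the closed form for $W(2p+1,2p)$ established in \cite[Corollary~2.5]{CX2020}. First, applying the duality $T(\{1\}_{p-1},\ol{1},\{1\}_{r-1},\ol{1})=T(\{1\}_{r-1},\ol{p+1})$ with $p=1$ and $r=2p$ collapses the first AMTV into the second:
\begin{equation*}
T(\bar 1,\{1\}_{2p-1},\bar 1)=T(\{1\}_{2p-1},\bar 2),
\end{equation*}
so it suffices to handle $T(\{1\}_{2p-1},\bar 2)$ and $W(2p,2p-1)$.

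For $W(2p,2p-1)$, I would take identity \eqref{c23} (which is \eqref{c22} specialized to $k=2$, $r=2p-1$) and isolate the $i=p$ term of its first sum, which equals $T(\{1\}_{-1},\bar 1)W(2p,2p-1)=W(2p,2p-1)$. Solving yields
\begin{equation*}
W(2p,2p-1)=T(\{1\}_{2p-2},\bar 2)+\sum_{i=1}^{p-1}T(\{1\}_{2p-2i-2},\bar 1)W(2i+1,2i)-\sum_{i=1}^{p-1}T(\{1\}_{2p-2i-1},\bar 1)W(2i,2i-1).
\end{equation*}
On the right, $T(\{1\}_{2p-2},\bar 2)$ has the required form by Theorem~\ref{thmc6}; each $W(2i+1,2i)$ does by \cite[Corollary~2.5]{CX2020}; each $T(\{1\}_{r-1},\bar 1)=(-1)^r(\pi/2)^r/r!$ is (via $\ol{t}(1)=\pi/2$) a rational multiple of a product of alternating $t$-values; and each $W(2i,2i-1)$ with $i<p$ is in the required form by the inductive hypothesis.

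For $T(\{1\}_{2p-1},\bar 2)$, I would take \eqref{bc2} (which is \eqref{bc1} with $k=2$, $r=2p$) and isolate the $i=p$ term of the second sum, namely $T(\{1\}_{-1},\bar 1)T(\{1\}_{2p-1},\bar 2)=T(\{1\}_{2p-1},\bar 2)$. Solving gives
\begin{equation*}
T(\{1\}_{2p-1},\bar 2)=-W(2p+1,2p)+\sum_{i=1}^{p}T(\{1\}_{2p-2i},\bar 1)T(\{1\}_{2i-2},\bar 2)-\sum_{i=1}^{p-1}T(\{1\}_{2p-2i-1},\bar 1)T(\{1\}_{2i-1},\bar 2),
\end{equation*}
in which every term on the right is of the desired shape by Theorem~\ref{thmc6}, \cite[Corollary~2.5]{CX2020}, the formula for $T(\{1\}_{r-1},\bar 1)$, or the inductive hypothesis. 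Running both recursions in parallel (with the joint base case $p=1$, where $W(2,1)=T(\bar 2)=-\ol{t}(2)/2$ and $T(\{1\}_1,\bar 2)$ is handled directly from \eqref{bc2} at $p=1$) completes the proof.

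There is no real mathematical obstacle here: the content is entirely algebraic bookkeeping within the framework already built in the paper. The only point requiring a moment's care is to check that each recursion only feeds on strictly smaller indices of the same three families and on quantities proved closed-form elsewhere, so that the induction closes cleanly; this is immediate from the index ranges above.
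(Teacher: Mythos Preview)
Your proposal is correct and follows essentially the same approach as the paper. The paper's proof is only sketched (``applying \eqref{c21}, \eqref{c23} and the identity $T(\{1\}_{r-1},\bar 1)=(-1)^r(\pi/2)^r/r!$''), and your write-up makes the implied induction explicit: isolate the top term in \eqref{c23} to recurse on $W(2p,2p-1)$, and use \eqref{bc2} (which the paper records immediately before the theorem, though it cites only \eqref{c23} in the proof sentence) to recurse on $T(\{1\}_{2p-1},\bar 2)$; the remaining ingredients are Theorem~\ref{thmc6}, \cite[Corollary~2.5]{CX2020}, and the closed form for $T(\{1\}_{r-1},\bar 1)$. The two recursions are in fact independent rather than genuinely joint, but that does not affect correctness.
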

For example, we compute the following cases
\begin{align*}
&T(1,\bar 2)=-\frac{7}{4}\z(3)+\frac{\pi}{4}\ol{t}(2)=T(\bar 1,1,\bar 1),\\
&T(1,1,1,\bar 2)=\frac{31}{16}\z(5)-\frac{1}{16}\pi\ol{t}(4)-\frac{7}{96}\pi^3\ol{t}(2)=T(\bar 1,1,1,1,\bar 1),
\end{align*}
and
\begin{align*}
&W(4,3)=\frac1{8}\ol{t}(4)+\ol{t}(2)\z(2)-\frac7{8} \pi \z(3),\\
&W(6,5)=-\frac 1{32}\ol{t}(6)-\frac{7}{192}\pi^3\z(3)-\frac{15}{8}\ol{t}(2)\z(4)+\frac{31}{32}\pi \z(5).
\end{align*}
Hence, from Theorems \ref{thmc6} and \ref{thmc7}, we can conclude that for any positive integer $p$,
\begin{align*}
T(\bar 1,\{1\}_{p-1},\bar 1)=T(\{1\}_{p-1},\ol{2})\in \mathbb{Q}[\ol{t}(1),\z(2),\ol{t}(2),\z(3),\ol{t}(3),\z(4),\ldots].
\end{align*}
Further, setting $r=2p-1$ and $k=2$ in \eqref{bc1} we find
\begin{align}
W(2p,2p-1)=\sum_{j=1}^{2p-1} (-1)^{j-1} T(\{1\}_{2p-j-2},\bar 1)T(\{1\}_{j-1},\bar 2).
\end{align}

Very recently, the first author and Wang prove in \cite[Theorem~4.9]{WWXC2020} that for any positive integers $k_1,k_2,k_3$ and $(\sigma_1,\sigma_2,\sigma_3)\in \{\pm1\}^3$ with $(k_2,\sigma_2)\neq (1,1)$ and $(k_3,\sigma_3)\neq (1,1)$, the Kaneko--Tsumura triple $T$-values \[(1+\sigma_1\sigma_3(-1)^{k_1+k_2+k_3})T(k_1,k_2,k_3;\sigma_1,\sigma_2,\sigma_3)\]
can be expressed in terms of combinations of (alternating) double $M$-values and single $M$-values, and give explicit though very complicated formula. Hence, from Theorem \ref{thm2} we can obtain the following theorem.
\begin{thm}\label{thm-WTV} For positive integer $p$, the two AMTVs
\begin{align}
T(1,\bar 1,\{1\}_{2p-2},\bar 1) \quad\text{and}\quad T(\{1\}_{2p-2},\bar 3)
\end{align}
can be expressed as a $\Q$-linear combinations of products of (alternating) double $M$-values and single $M$-values.
\end{thm}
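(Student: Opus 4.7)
The plan is to combine three tools: the duality of AMTVs (Theorem~\ref{thm:dualityAMTV}) to identify the two expressions in the statement, a careful specialization of Theorem~\ref{thm2} to reduce to a weighted sum of depth-$3$ AMTVs, and the \cite{WWXC2020} triple-$T$ reduction cited in the paragraph preceding the theorem. The crucial observation is that a single choice of parameters, $(l,m)=(2,1)$, performs both reductions at once.

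First I would use Theorem~\ref{thm:dualityAMTV} to show that $T(\{1\}_{2p-2},\bar 3)=T(1,\bar 1,\{1\}_{2p-2},\bar 1)$. For the index $(\{1\}_{2p-2},3)$ with signature $(1,\dots,1,-1)$ every partial product $\sigma_i\sigma_{i+1}\cdots\sigma_{2p-1}$ equals $-1$, so the dual iterated-integral word given by \eqref{defn:dualOfk} reduces to $\om_1^{2}\om_{-1}^{2p-1}$. Parsing this word under $\bfq$ returns the depth-$(2p+1)$ all-ones index with signature $(1,-1,1,\dots,1,-1)$, which is exactly $T(1,\bar 1,\{1\}_{2p-2},\bar 1)$. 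Hence it suffices to establish the claim for this second expression.

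Next I would apply Theorem~\ref{thm2} with $l=2$ and $m=1$. Because the unique composition of $2p+1$ into $2p+1$ positive parts is $(1,\dots,1)$, the right-hand side collapses to $(-1)^p\alpha_0 W_2(2p+1,2p+1)=(-1)^p T(1,\bar 1,\{1\}_{2p-2},\bar 1)$, and rearranging gives
$$T(1,\bar 1,\{1\}_{2p-2},\bar 1)=(-1)^{p+1}\sum_{j=1}^p \alpha_{p-j}\,W_2(2j+1,3),$$
where $W_2(2j+1,3)=\sum_{k_1+k_2+k_3=2j+1}T(k_1,\bar k_2,\bar k_3)$ is a finite $\Z$-linear combination of depth-$3$ AMTVs of odd weight $2j+1$ and signature $(1,-1,-1)$.

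Finally I would invoke \cite[Theorem~4.9]{WWXC2020} on each summand. The admissibility conditions $(k_2,\sigma_2)\neq(1,1)$ and $(k_3,\sigma_3)\neq(1,1)$ are automatic because $\sigma_2=\sigma_3=-1$, and the parity test $\sigma_1\sigma_3(-1)^{k_1+k_2+k_3}=(-1)\cdot(-1)^{2j+1}=1$ also holds. Hence $2T(k_1,\bar k_2,\bar k_3)$, and therefore $T(k_1,\bar k_2,\bar k_3)$ itself, is a $\Q$-linear combination of products of alternating double $M$-values and single $M$-values. The coefficient $\alpha_{p-j}=(-1)^{p-j}\pi^{2(p-j)}/(2(p-j)+1)!$ is a $\Q$-multiple of $\pi^{2(p-j)}$; using $T(\bar 1)=-\pi/2$, this factor is itself (up to a rational) a product of $2(p-j)$ single $M$-values, so it can be absorbed into the same class of objects. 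The main obstacle is precisely this parity alignment: $(l,m)=(2,1)$ is the specialization of Theorem~\ref{thm2} that simultaneously isolates the target on the right and produces only depth-$3$ AMTVs lying in the non-vanishing regime $1+\sigma_1\sigma_3(-1)^{|\bfk|}\neq 0$ of the cited triple-$T$ identity; any other choice either fails to isolate a single term on the right or yields a signature for which \cite[Theorem~4.9]{WWXC2020} is uninformative.
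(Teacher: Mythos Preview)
Your proof is correct and follows essentially the same route as the paper: specialize Theorem~\ref{thm2} at $(l,m)=(2,1)$ to reduce $T(1,\bar 1,\{1\}_{2p-2},\bar 1)=W_2(2p+1,2p+1)$ to a combination of the depth-three weighted sums $W_2(2j+1,3)$, invoke \cite[Theorem~4.9]{WWXC2020} on the summands $T(k_1,\bar k_2,\bar k_3)$, and use duality (the paper cites \cite[(4.6)]{CX2020} whereas you derive it directly from Theorem~\ref{thm:dualityAMTV}) to pass to $T(\{1\}_{2p-2},\bar 3)$. Your explicit handling of the $\alpha_{p-j}$ factors via $T(\bar 1)=-\pi/2$ is a detail the paper leaves implicit.
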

\begin{proof}
Setting $m=1$ and $l=2$ in Theorem \ref{thm2} yields
\[W_2(2p+1,2p+1)=T(1,\bar 1,\{1\}_{2p-2},\bar 1)=(-1)^{p-1}\sum_{j=1}^p \alpha_{p-j}W_2(2j+1,3).\]
From \cite[Theorem~4.9]{WWXC2020} with $\sigma_1=1,\sigma_2=\sigma_3=-1$, we know that the triple $T$-values $T(k_1,\bar k_2,\bar k_3)$ are reducible to combinations of double $M$-values and single $M$-values. Hence, using the definition (\ref{a-5}), we have
\[W_2(2k+1,3)=\sum_{k_1+k_2+k_3=2k+1,\atop k_1,k_2,k_3\geq 1} T(k_1,\bar k_2,\bar k_3).\]
Then, using the well-known duality relation (see \cite[(4.6)]{CX2020})
\[T(\{1\}_{p-1},\ol{1},\{1\}_{r-1},\ol{1})=T(\{1\}_{r-1},\ol{p+1})\]
we obtain the desired description.
\end{proof}

For example, we calculate the following cases:
\begin{align*}
W_2(3,3)&=-\frac{\pi^3}{16},\\
W_2(5,3)&=-\frac{241\pi^5}{11520}-\frac{\pi^3}{24}\log^3(2)+\frac{\pi}{24}\log^4(2)+\pi {\rm Li}_4(1/2)+\frac{\pi}{8}\log(2)\z(3),\\
W_2(7,3)&=-\frac{47\pi^7}{11520}-\frac{\pi^5}{192}\log^2(2)+\frac{\pi^3}{192}\log^4(2)+\frac{\pi^3}{8}{\rm Li}_4(1/2)+\frac{7\pi^3}{64}\log(2)\z(3),\\
&\quad+\frac{\pi}{2} \left(T(1,\bar 5)+T(2,\bar 4)+T(3,\bar 3)+T(4,\bar 2)+T(5,\bar 1) \right),
\end{align*}
and
\begin{align*}
&T(1,\bar 1,\bar 1)=T(\bar 3)=-\frac{\pi^3}{16},\\
&T(1,\bar 1,1,1,\bar 1)=T(1,1,\bar 3)=\frac{121\pi^5}{11520}+\frac{\pi^3}{24}\log^2(2)-\frac{\pi}{24}\log^4(2)-\pi {\rm Li}_4(1/2)-\frac{7\pi}{8}\log(2)\z(3),\\
&T(1,\bar 1,\{1\}_4,\bar 1)=T(\{1\}_4,\bar 3)=-\frac{77\pi^7}{69120}+\frac{\pi^5}{576}\log^2(2)-\frac{\pi^3}{576}\log^4(2)-\frac{\pi^3}{24}{\rm Li}_4(1/2)\\
&\quad\quad\quad\quad\quad\quad\quad\quad\quad\quad-\frac{7\pi^3}{192}\log(2)\z(3)+\frac{\pi}{2} \left(T(1,\bar 5)+T(2,\bar 4)+T(3,\bar 3)+T(4,\bar 2)+T(5,\bar 1) \right).
\end{align*}

\section{Special Values of AMTVs and Several Duality Formulas}\label{sec4}
In this section we prove the duality formula in Theorem \ref{thm3} and find some duality relations of AMTVs by using the method of iterated integrals.

\subsection{Proof of A Duality Formula of Weighted Sums}
To prove Theorem \ref{thm3}, we need the following two lemmas. The first one follows quickly from the general theory of Chen's iterated integrals.
\begin{lem}\label{lem4.1} \emph{(cf. \cite[(1.6.1-2)]{KTChen1971})}
If $f_i\ (i=1,\dotsc,m)$ are integrable real functions, the following identity holds:
\begin{equation*}
 g\left( {{f_1},\dotsc,{f_m}} \right) + {\left( { - 1} \right)^m}g\left( {{f_m},\dotsc,{f_1}} \right)
 = \sum\limits_{i = 1}^{m - 1} {{{\left( { - 1} \right)}^{i - 1}}g\left( {{f_i},{f_{i - 1}}, \cdots ,{f_1}} \right)} g\left( {{f_{i + 1}},{f_{i + 2}} \cdots ,{f_m}} \right),
\end{equation*}
where $g\left( {{f_1}, \dotsc,{f_m}} \right)$ is defined by
\[g\left( {{f_1},\dotsc,{f_m}} \right): = \int\limits_{0 < {t_m} <  \cdots < {t_1} < 1}
{f_1}\left( {{t_1}} \right) \cdots {f_m}\left( {{t_m}} \right)d{t_1}  \cdots d{t_m} .\]
\end{lem}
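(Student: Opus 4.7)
The identity is K.\,T.\ Chen's classical formula for iterated integrals, proved in \cite{KTChen1971}. To give a self-contained argument, my plan is to reduce it to the standard shuffle product formula
\begin{equation*}
  g(g_1,\dotsc,g_a)\,g(h_1,\dotsc,h_b) \;=\; \sum_{\sigma\in\mathrm{Sh}(a,b)} g\bigl(\sigma\text{-shuffle of }(g_1,\dotsc,g_a)\text{ and }(h_1,\dotsc,h_b)\bigr),
\end{equation*}
which itself follows from Fubini's theorem by decomposing the cube $[0,1]^{a+b}$ according to the interleaved ordering of the two groups of integration variables, producing one simplex per $(a,b)$-shuffle.

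It is cleaner to prove the equivalent \emph{augmented} identity
\begin{equation*}
  \sum_{i=0}^{m}(-1)^i\,g(f_i,f_{i-1},\dotsc,f_1)\,g(f_{i+1},f_{i+2},\dotsc,f_m)=0,
\end{equation*}
with the convention $g(\emptyset):=1$, and then separate the boundary contributions at $i=0$ and $i=m$ to recover the stated lemma. Expanding each product via the shuffle formula, the left-hand side becomes a $\mathbb{Z}$-linear combination of iterated integrals $g(f_{\pi(1)},\dotsc,f_{\pi(m)})$ indexed by permutations $\pi$ of $\{1,\dotsc,m\}$ arising as shuffles of a reversed prefix $(f_i,\dotsc,f_1)$ with its complementary suffix $(f_{i+1},\dotsc,f_m)$.

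The main obstacle is the combinatorial cancellation: for each fixed target word $w=(f_{\pi(1)},\dotsc,f_{\pi(m)})$, the contributions coming from different cut indices $i$ must sum to zero once multiplied by $(-1)^i$. Define $\epsilon_k:=+1$ if $f_k$ appears to the left of $f_{k+1}$ in $w$, and $\epsilon_k:=-1$ otherwise, for $1\le k\le m-1$. A cut index $i$ produces $w$ in its shuffle expansion if and only if $\epsilon_k=-1$ for all $k<i$ and $\epsilon_k=+1$ for all $k>i$ (with $\epsilon_i$ unconstrained when $1\le i\le m-1$), and in that case the shuffle producing $w$ is unique and contributes $+1$. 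A short case analysis then shows that the set of valid $i$'s for any given $w$ is either empty or a pair of consecutive integers $\{i_0,i_0+1\}$; in the latter case $(-1)^{i_0}+(-1)^{i_0+1}=0$, so all contributions cancel in pairs and the augmented identity follows. Equivalently, this cancellation is the antipode axiom $m\circ(S\otimes\mathrm{id})\circ\Delta=\eta\circ\epsilon$ of the shuffle Hopf algebra on the tensor algebra of $1$-forms, with antipode $S([f_1|\cdots|f_m])=(-1)^m[f_m|\cdots|f_1]$ and deconcatenation coproduct $\Delta$.
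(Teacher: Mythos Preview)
Your argument is correct. The paper itself does not prove this lemma; it simply records the statement with a reference to Chen \cite{KTChen1971} and the remark that it ``follows quickly from the general theory of Chen's iterated integrals.'' Your self-contained route---rewriting the claim as the vanishing telescoping sum $\sum_{i=0}^{m}(-1)^{i}g(f_i,\dotsc,f_1)g(f_{i+1},\dotsc,f_m)=0$, expanding each product by the shuffle formula, and observing that every resulting word $w$ is hit by either no cut index or by exactly two consecutive cut indices $i_0,i_0+1$---is the standard Hopf-algebraic proof (the antipode relation for the shuffle Hopf algebra), and the combinatorial check you sketch is accurate: the constraints $\epsilon_k=-1$ for $k<i$ and $\epsilon_k=+1$ for $k>i$ force the signature $(\epsilon_1,\dotsc,\epsilon_{m-1})$ of any contributing $w$ to be a block of $-1$'s followed by a block of $+1$'s, and then the admissible $i$'s are precisely the two integers straddling the break point. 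So you have supplied what the paper merely quotes.
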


\begin{lem} For any $\bfk=(k_1,\dotsc,k_r)\in \N^r$ and integer $p\geq 0$,
\begin{align}\label{d2}
&\int_0^1 \frac{\log^p\left( \frac{1-t}{1+t}\right)\, dt}{1+t^2} \left( \frac{dt}{t}\right)^{k_1-1} \frac{dt}{1+t^2} \left( \frac{dt}{t}\right)^{k_2-1} \dotsm \frac{dt}{1+t^2} \left(\frac{dt}{t}\right)^{k_r-1}\nonumber\\
&=\frac{(-1)^{p+r}p!}{2^r} T(\{1\}_{p-1},\ol{1},k_1,\dotsc,k_{r-1},\ol{k_r}),
\end{align}
where $(\{1\}_{-1},1):=\emptyset$.
\end{lem}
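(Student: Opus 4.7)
The plan is to convert both sides of \eqref{d2} into iterated integrals in the alphabet $\{\om_{-1},\om_0,\om_1\}$ and match them directly. The key observation is that $\log\bigl(\frac{1-t}{1+t}\bigr)$ is itself an iterated integral, because $\frac{d}{dt}\log\bigl(\frac{1-t}{1+t}\bigr)=-\frac{2}{1-t^2}$, so $\log\bigl(\frac{1-t}{1+t}\bigr)=-\int_0^t\om_1$. By the standard power formula for iterated integrals, this gives
\[
\log^p\!\left(\frac{1-t}{1+t}\right)=(-1)^p\,p!\,\int_{0<s_1<\cdots<s_p<t}\om_1(s_1)\cdots\om_1(s_p),
\]
so $\log^p\bigl(\frac{1-t}{1+t}\bigr)$ contributes $p$ leading copies of $\om_1$ when it is inserted into an iterated integral starting at $0$.

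Next I would rewrite the differential forms in the LHS of \eqref{d2}. Since $\frac{dt}{1+t^2}=\tfrac12\om_{-1}(t)$ and $\frac{dt}{t}=\om_0(t)$, each of the $r$ factors $\frac{dt}{1+t^2}$ contributes a factor of $\tfrac12$, producing an overall $\frac{1}{2^r}$. Combined with the expansion of $\log^p$, the LHS becomes
\[
\frac{(-1)^p p!}{2^r}\int_0^1 \om_1^{\,p}\,\om_{-1}\,\om_0^{k_1-1}\,\om_{-1}\,\om_0^{k_2-1}\cdots\om_{-1}\,\om_0^{k_r-1}.
\]

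For the RHS I would apply \eqref{equ:integralAMTV} to $T(\{1\}_{p-1},\ol 1,k_1,\dotsc,k_{r-1},\ol{k_r})$. Writing the signature as $(\sigma_1,\dotsc,\sigma_{p+r})$, we have $\sigma_j=+1$ except $\sigma_p=\sigma_{p+r}=-1$. A short computation of the partial products $\sigma'_j=\sigma_j\sigma_{j+1}\cdots\sigma_{p+r}$ shows
\[
\sigma'_1=\cdots=\sigma'_p=+1,\qquad \sigma'_{p+1}=\cdots=\sigma'_{p+r}=-1,
\]
so $\prod_{j=1}^{p+r}\sigma'_j=(-1)^r$, and \eqref{equ:integralAMTV} yields
\[
T(\{1\}_{p-1},\ol 1,k_1,\dotsc,k_{r-1},\ol{k_r})=(-1)^r\int_0^1 \om_1^{\,p}\,\om_{-1}\,\om_0^{k_1-1}\cdots\om_{-1}\,\om_0^{k_r-1}.
\]

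Combining the two expressions gives the identity \eqref{d2} with the correct constant $\frac{(-1)^{p+r}p!}{2^r}$. The only care needed is the bookkeeping of the $\sigma'_j$'s — the main (mild) obstacle — together with the degenerate case $p=0$, in which the $\om_1^{\,p}$-prefactor is empty and the convention $(\{1\}_{-1},\ol 1):=\emptyset$ reduces the identity to the direct statement $\int_0^1\om_{-1}\om_0^{k_1-1}\cdots\om_{-1}\om_0^{k_r-1}=(-1)^r T(k_1,\dotsc,k_{r-1},\ol{k_r})$, which is a special instance of \eqref{equ:integralAMTV}.
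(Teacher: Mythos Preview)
Your argument is correct. You rewrite $\log^p\!\bigl(\tfrac{1-t}{1+t}\bigr)=(-1)^p p!\int_0^t\om_1^{\,p}$, absorb the $r$ factors $\tfrac{dt}{1+t^2}=\tfrac12\om_{-1}$, and then invoke \eqref{equ:integralAMTV}; the bookkeeping of the $\sigma'_j$'s is right and the $p=0$ case is handled by the stated convention.

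The paper proves the lemma by a different route. It also starts from the identification of $\log^p\!\bigl(\tfrac{1-x}{1+x}\bigr)$ with $(-1)^p p!\,{\rm A}(\{1\}_p;x)$, but instead of staying in the iterated-integral alphabet it expands ${\rm A}(\{1\}_p;x)$ into its Taylor series \eqref{d3}, expands each $(1+t^2)^{-1}$ as a geometric series, and then integrates term by term to land on the series definition of the AMTV. Your approach avoids all of that series manipulation by reusing the integral representation \eqref{equ:integralAMTV} already established in Section~\ref{sec2}; it is shorter and keeps the whole argument inside the $\{\om_{-1},\om_0,\om_1\}$ formalism. The paper's approach, by contrast, is more self-contained in that it goes back to the defining series of $T$ rather than quoting \eqref{equ:integralAMTV}.
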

\begin{proof} First, we note the fact that ${\rm A}(\{1\}_p;x)=\frac{1}{p!} \left({\rm A}(1;x)\right)^r=\frac{(-1)^p}{p!}\log^p\left(\frac{1-x}{1+x} \right)$ (see \cite[Lemma~5.1 (ii)]{KTA2018} or \cite[(2.14)]{CX2020}). Then
according to definition, we have
\begin{align}\label{d3}
\log^p\left(\frac{1-x}{1+x} \right)=(-1)^p p! 2^p \sum\limits_{1 < {m_1} <  \cdots  < {m_r}}  {\frac{{{x^{{2m_p-p}}}}}{{(2m_1-1)(2m_2-2) \dotsm (2m_p-p)}}}.
\end{align}
Now we can apply \eqref{d3} to the integral of \eqref{d2} and expand the $(1+t_j)^{-1}$ into geometric series to deduce the desired formula with an elementary calculation.
\end{proof}

Next, we prove a duality formula about the iterated integral of \eqref{b9}.
\begin{thm} \label{thm:d4}
For any $\bfk=(k_1,\dotsc,k_r)\in \N^r$ and integers $p,m\geq 0$,
\begin{align*}
&\int_0^1 \frac{\log^p\left( \frac{1-t}{1+t}\right)\, dt}{1+t^2} \left(\frac{dt}{t}\right)^{k_1-1} \frac{dt}{1+t^2} \left(\frac{dt}{t}\right)^{k_2-1} \dotsm \frac{dt}{1+t^2} \left(\frac{dt}{t}\right)^{k_r-1} \frac{\log^m\left( \frac{1-t}{1+t}\right)\, dt}{1+t^2}\nonumber\\
&+(-1)^{|\bf k|+1}\int_0^1 \frac{\log^m\left( \frac{1-t}{1+t}\right)\, dt}{1+t^2} \left(\frac{dt}{t}\right)^{k_r-1} \frac{dt}{1+t^2} \left(\frac{dt}{t}\right)^{k_{r-1}-1} \dotsm \frac{dt}{1+t^2} \left(\frac{dt}{t}\right)^{k_1-1} \frac{\log^p\left( \frac{1-t}{1+t}\right)\, dt}{1+t^2}\nonumber\\
&=C_{m,p,r}\sum_{l=1}^r (-1)^{|\overleftarrow\bfk_{r-l}|} \sum_{j=1}^{k_l} (-1)^{j-1} T\left(\{1\}_{m-1},\ol{ 1},\overleftarrow\bfk_{r-l},\ol{j} \right)T\left(\{1\}_{p-1},\ol{1},\ora\bfk_{l-1},{\ol{k_l-j+1}} \right),
\end{align*}
where $C_{m,p,r}:=\frac{(-1)^{m+p+r+1}m!p!}{2^{r+1}}$.
\end{thm}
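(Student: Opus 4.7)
The plan is to apply Lemma~\ref{lem4.1}, Chen's iterated-integral identity, to the first integral $I_1$ on the left-hand side of Theorem~\ref{thm:d4}, regarded as an iterated integral over $N$ one-forms $\alpha_1,\dotsc,\alpha_N$ in the paper's left-to-right convention. Reading $I_1$ from left to right, $\alpha_1=\log^p((1-t)/(1+t))\,dt/(1+t^2)$, then $k_1-1$ copies of $dt/t$, then the alternating pattern $dt/(1+t^2)$, $(dt/t)^{k_s-1}$ for $s=2,\dotsc,r$, and finally $\alpha_N=\log^m((1-t)/(1+t))\,dt/(1+t^2)$. A direct count yields $N=2+(r-1)+\sum_{s=1}^r(k_s-1)=|\bfk|+1$, so the reversal sign $(-1)^N$ produced by Chen's identity matches exactly the $(-1)^{|\bfk|+1}$ that multiplies the second integral $I_2=\int_0^1\alpha_N\cdots\alpha_1$ on the LHS. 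After translating Chen's right-to-left convention to the paper's left-to-right one, Lemma~\ref{lem4.1} gives
\begin{align*}
I_1+(-1)^{|\bfk|+1}I_2=\sum_{q=1}^{N-1}(-1)^{|\bfk|-q}\left(\int_0^1\alpha_N\alpha_{N-1}\cdots\alpha_{q+1}\right)\left(\int_0^1\alpha_1\alpha_2\cdots\alpha_q\right).
\end{align*}

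Next, I parametrize the cut position by a pair $(l,j)$ with $1\le l\le r$, $1\le j\le k_l$, setting $q=|\ora{\bfk}_{l-1}|+j$; this corresponds to cutting after the $j$-th form of the $l$-th ``block'', where block~$1$ consists of $\alpha_1$ followed by $(dt/t)^{k_1-1}$ and block~$s\ge2$ consists of $dt/(1+t^2)$ followed by $(dt/t)^{k_s-1}$. The total count $\sum_{l=1}^r k_l=|\bfk|=N-1$ agrees with the number of summands in Chen's identity. For each $(l,j)$ the left sub-integral fits precisely the template of \eqref{d2} with parameter $p$ and composition $(k_1,\dotsc,k_{l-1},j)$, giving $\frac{(-1)^{p+l}p!}{2^l}\,T(\{1\}_{p-1},\ol{1},\ora{\bfk}_{l-1},\ol{j})$. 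Reading the right sub-integral in its natural (left-to-right) order starting from $\alpha_N$ and moving backwards through the original positions down to $q+1$, it again fits the template of \eqref{d2}, now with parameter $m$ and composition $(k_r,k_{r-1},\dotsc,k_{l+1},k_l-j+1)$, giving $\frac{(-1)^{m+r-l+1}m!}{2^{r-l+1}}\,T(\{1\}_{m-1},\ol{1},\overleftarrow{\bfk}_{r-l},\ol{k_l-j+1})$.

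Multiplying the two pieces and collecting signs and factorials, each $(l,j)$-term becomes
\begin{align*}
(-1)^{|\bfk|-q}\,C_{m,p,r}\,T(\{1\}_{p-1},\ol{1},\ora{\bfk}_{l-1},\ol{j})\,T(\{1\}_{m-1},\ol{1},\overleftarrow{\bfk}_{r-l},\ol{k_l-j+1}).
\end{align*}
Finally I reindex the inner sum by $j\mapsto k_l-j+1$; this swaps $\ol{j}$ with $\ol{k_l-j+1}$ and converts $(-1)^{|\bfk|-q}=(-1)^{|\overleftarrow{\bfk}_{r-l+1}|-j}$ into $(-1)^{|\overleftarrow{\bfk}_{r-l}|}(-1)^{j-1}$ via $|\overleftarrow{\bfk}_{r-l+1}|=|\overleftarrow{\bfk}_{r-l}|+k_l$. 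The resulting expression matches the right-hand side of Theorem~\ref{thm:d4} term by term. The main obstacle in executing this plan is the careful sign bookkeeping, together with the identification of the reversed right sub-integral as an instance of the template in \eqref{d2}: its trailing $(dt/t)^{k_l-j}$ must be matched to the final $(dt/t)^{k'_{r'}-1}$ in the template, forcing $k'_{r'}=k_l-j+1$, which in turn explains why $\ol{k_l-j+1}$ (rather than $\ol{k_l-j}$) appears in the theorem's formulation.
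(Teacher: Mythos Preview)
Your proof is correct and follows essentially the same approach as the paper: both apply Lemma~\ref{lem4.1} (Chen's iterated-integral identity) to decompose the first integral, and then identify each factor in the resulting sum via the formula \eqref{d2}. Your presentation is slightly more systematic---you parametrize the cut position uniformly by a pair $(l,j)$ and carry out the sign bookkeeping and the final reindexing $j\mapsto k_l-j+1$ explicitly---whereas the paper writes out the sum block by block using the shorthand $J_p(k_1,\dotsc,k_r)$ for the integral in \eqref{d2}, but the underlying argument is identical.
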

\begin{proof}
Let $J_p(k_1,\dotsc,k_r)$ be the integral on the left hand of \eqref{d2}. Using Lemma \ref{lem4.1}, we deduce
\begin{align*}
&\int_0^1 \frac{\log^p\left( \frac{1-t}{1+t}\right)\, dt}{1+t^2} \left(\frac{dt}{t}\right)^{k_1-1} \frac{dt}{1+t^2} \left(\frac{dt}{t}\right)^{k_2-1} \dotsm \frac{dt}{1+t^2} \left(\frac{dt}{t}\right)^{k_r-1} \frac{\log^m\left( \frac{1-t}{1+t}\right)\, dt}{1+t^2}\\
&=\sum_{l=1}^{k_r} (-1)^{l-1} J_m(l)J_p(k_1,\dotsc,k_{r-1},k_r-l+1)\\
&\quad+(-1)^{k_r} \sum_{l=1}^{k_{r-1}} (-1)^{l-1} J_m(k_r,l)J_p(k_1,\dotsc,k_{r-2},k_{r-1}-l+1)\\
&\quad+(-1)^{k_r+{k_{r-1}}} \sum_{l=1}^{k_{r-2}} (-1)^{l-1} J_m(k_r,k_{r-1},l)J_p(k_1,\dotsc,k_{r-3},k_{r-2}-l+1)\\
&\quad+\cdots\\
&\quad+(-1)^{k_r+{k_{r-1}}+\cdots+k_2} \sum_{l=1}^{k_{1}} (-1)^{l-1} J_m(k_r,k_{r-1},\dotsc,k_2,l)J_p(k_1-l+1)\\
&\quad+(-1)^{|\bf k|}\int_0^1 \frac{\log^m\left( \frac{1-t}{1+t}\right)\, dt}{1+t^2} \left(\frac{dt}{t}\right)^{k_r-1} \frac{dt}{1+t^2} \left(\frac{dt}{t}\right)^{k_{r-1}-1} \dotsm \frac{dt}{1+t^2} \left(\frac{dt}{t}\right)^{k_1-1} \frac{\log^p\left( \frac{1-t}{1+t}\right)\, dt}{1+t^2}.
\end{align*}
We can now complete the proof by applying the identity
\begin{align*}
J_p(k_1,\dotsc,k_r)=\frac{(-1)^{p+r}p!}{2^r} T(\{1\}_{p-1},\ol{1},k_1,\dotsc,k_{r-1},\ol{k_r})
\end{align*}
with a straightforward calculation,
\end{proof}

\medskip
\noindent
{\bf Proof of Theorem \ref{thm3}}. Plugging \eqref{b9} into Theorem~\ref{thm:d4} we get the desired result \eqref{a14}.\hfill$\square$

\bigskip
Further, using \eqref{b5} and Lemma \ref{lem4.1}, we can also get the following theorem.

\begin{thm}\label{thmd4} For positive integers $m,k$ and $r$,
\begin{align}
&\sum_{k_1+\cdots+k_r=k+r-1,\atop k_1,\dotsc,k_r\geq 1} \binom{k_r+m-2}{m-1}{\rm A}(k_1,\dotsc,k_{r-1},k_r+m-1;x_1,\dotsc,x_{r-1},x_r)\nonumber\\
&+(-1)^r \sum_{k_1+\cdots+k_r=m+r-1,\atop k_1,\dotsc,k_r\geq 1} \binom{k_r+k-2}{k-1}{\rm A}(k_1,\dotsc,k_{r-1},k_r+k-1;x_r,\dotsc,x_{2},x_1)\nonumber\\
&=\sum_{j=1}^{r-1} (-1)^{j-1} {\rm A}(\{1\}_{r-1-j},m;x_{j+1},x_{j+2},\dotsc,x_r){\rm A}(\{1\}_{j-1},k;x_j,x_{j-1},\dotsc,x_1).
\end{align}
\end{thm}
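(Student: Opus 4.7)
The plan is to apply Lemma~\ref{lem4.1} (Chen's iterated integral identity) to integral representations supplied by \eqref{b5}. First, I use \eqref{b5} to convert both sums on the left-hand side of Theorem~\ref{thmd4} into iterated integrals on the simplex $D_r$. Writing $c:=\tfrac{(-1)^{k+m}4^r x_1\cdots x_r}{(m-1)!(k-1)!}$, the first sum equals $c\cdot I_1$ where
\[
I_1:=\int_{D_r}\frac{\log^{m-1}\bigl(\tfrac{1-t_1}{1+t_1}\bigr)\log^{k-1}\bigl(\tfrac{1-t_r}{1+t_r}\bigr)\,dt_1\cdots dt_r}{\prod_{i=1}^r[(1+t_i)^2-x_{r+1-i}^2(1-t_i)^2]}.
\]
Feeding the reversed tuple $(x_r,\dotsc,x_1)$ into \eqref{b5} expresses the second sum (without the outer sign $(-1)^r$) as $c\cdot I_2$, where $I_2$ has denominator pairing $t_i\leftrightarrow x_i$ (rather than $x_{r+1-i}$) and has the two logarithm exponents interchanged between $t_1$ and $t_r$.

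Second, writing $D_x(t):=(1+t)^2-x^2(1-t)^2$, define $F_1(t):=\log^{k-1}\bigl(\tfrac{1-t}{1+t}\bigr)/D_{x_1}(t)$, $F_r(t):=\log^{m-1}\bigl(\tfrac{1-t}{1+t}\bigr)/D_{x_r}(t)$, and $F_i(t):=1/D_{x_i}(t)$ for $1<i<r$. A direct matching of integrands (for $I_1$, apply the substitution $t_i\mapsto t_{r+1-i}$) identifies $I_1=g(F_1,F_2,\dotsc,F_r)$ and $I_2=g(F_r,F_{r-1},\dotsc,F_1)$ in the notation of Lemma~\ref{lem4.1}. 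Chen's identity then yields
\[
g(F_1,\dotsc,F_r)+(-1)^r g(F_r,\dotsc,F_1)=\sum_{i=1}^{r-1}(-1)^{i-1}g(F_i,F_{i-1},\dotsc,F_1)\,g(F_{i+1},F_{i+2},\dotsc,F_r).
\]

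Third, I would evaluate each factor on the right-hand side by a second application of \eqref{b5}, now in the degenerate case where the composition sum forces every entry to be $1$. Applying \eqref{b5} with $r\to i$, $k\to 1$, $m\to k$, and arguments $(x_i,x_{i-1},\dotsc,x_1)$ produces $g(F_i,\dotsc,F_1)=\tfrac{(-1)^{k-1}(k-1)!}{4^i x_1\cdots x_i}\mathrm{A}(\{1\}_{i-1},k;x_i,\dotsc,x_1)$, and an analogous step gives $g(F_{i+1},\dotsc,F_r)=\tfrac{(-1)^{m-1}(m-1)!}{4^{r-i}x_{i+1}\cdots x_r}\mathrm{A}(\{1\}_{r-i-1},m;x_{i+1},\dotsc,x_r)$. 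Multiplying Chen's identity by $c$ cancels every prefactor exactly: the signs $(-1)^{k-1+m-1}$ combine with $(-1)^{k+m}$ from $c$, and the remaining $4^r x_1\cdots x_r$ and $(k-1)!(m-1)!$ also cancel. After re-indexing $i\mapsto j$, the identity in Theorem~\ref{thmd4} emerges.

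The main obstacle is the careful bookkeeping of variable orderings, since \eqref{b5} pairs the $l$-th integration variable with $x_{r+1-l}$, so each conversion between an $\mathrm{A}$-value and its integral representation reverses the order of the $x$-arguments. One therefore has to invoke \eqref{b5} twice with matching conventions: first to write the two left-hand sums as integrals on $D_r$, and then to read the sub-integrals on $D_i$ and $D_{r-i}$ back as single $\mathrm{A}$-values with arguments $(x_i,\dotsc,x_1)$ and $(x_{i+1},\dotsc,x_r)$ respectively. With this indexing tracked correctly, all signs and factorial constants cancel in a routine fashion.
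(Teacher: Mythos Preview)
Your proposal is correct and follows exactly the approach the paper indicates: the paper's own proof simply says ``This follows immediately from \eqref{b5} and Lemma~\ref{lem4.1}'' and leaves the details to the reader, and your argument supplies precisely those details---using \eqref{b5} to convert both weighted sums into $D_r$-integrals, applying Chen's identity with the functions $F_1,\dotsc,F_r$, and then reading the resulting sub-integrals back as $\mathrm{A}$-values via the degenerate $k=1$ case of \eqref{b5}. The bookkeeping of variable reversals and the cancellation of the prefactor $c$ are handled correctly.
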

\begin{proof} This follows immediately from the \eqref{b5} and Lemma \ref{lem4.1}. We leave the detail to the
interested reader.
\end{proof}

In particular, if $r=2$ then we obtain the decomposition
\begin{align*}
{\rm A}(k;x_1){\rm A}(m;x_2)=& \sum_{k_1+k_2=k+1,\atop k_1,k_2\geq 1} \binom{k_2+m-2}{m-1} {\rm A}(k_1,k_2+m-1;x_1,x_2)\\
+&  \sum_{k_1+k_2=m+1,\atop k_1,k_2\geq 1} \binom{k_2+k-2}{k-1} {\rm A}(k_1,k_2+k-1;x_2,x_1).
\end{align*}

It is clear that we can find a lot of duality relations of (alternating) MTVs from Theorem \ref{thmd4}. For example, setting $x_1=x_2=\cdots=x_r=1$ and $m,k\geq 2$ yields the well-known result (see \cite[Theorem~5.7]{KTA2018})
\begin{multline*}
\sum_{j=1}^{r-1} (-1)^{j-1} T(\{1\}_{r-1-j},m)T(\{1\}_{j-1},k)=
\sum_{|\bfk_r|=k+r-1} \binom{k_r+m-2}{m-1}T(\bfk_{r-1},k_r+m-1) \\
+(-1)^r \sum_{|\bfk_r|=m+r-1} \binom{k_r+k-2}{k-1}T(\bfk_{r-1},k_r+k-1),
\end{multline*}
where $\bfk_r=(k_1,\dotsc,k_r)\in\N^r$ as before.
Similarly, setting $x_1=x_2=\cdots=x_r=-1$ we recover the following formula (see \cite[(4.9)]{CX2020})
\begin{multline*}
\sum_{j=1}^{r-1} (-1)^{j-1} T(\{1\}_{r-1-j},\bar m)T(\{1\}_{j-1},\bar k)=\sum_{|\bfk_r|=k+r-1} \binom{k_r+m-2}{m-1}T(\bfk_{r-1},\ol{k_r+m-1}) \\
 +(-1)^r \sum_{|\bfk_r|=m+r-1} \binom{k_r+k-2}{k-1}T(\bfk_{r-1},\ol{k_r+k-1}).
\end{multline*}

\subsection{Several Duality Formulas of AMTVs}
We can also find many interesting results about AMTVs from Theorem \ref{thm3}. For example,
setting $m=p=0$ in \eqref{a14} yields the following corollary.
\begin{cor}\label{cord4} For any $\bfk=(k_1,\dotsc,k_r)\in \N^r$, we have
\begin{align}\label{d5}
&T\left(\Cat_{\substack{l=1}}^r \{\ol{1}\diamond\{1\}_{k_{r+1-l}-2}\diamond\ol{1}\},\bar 1\right)+(-1)^{|\bfk|+1}T\left(\Cat_{\substack{l=1}}^r \{\ol{1}\diamond\{1\}_{k_{l}-2}\diamond\ol{1}\},\bar 1\right)\nonumber\\
&=\sum_{l=1}^r (-1)^{|\overleftarrow{\bfk}_{r-l}|} \sum_{j=1}^{k_l} (-1)^{j-1} T\left(\overleftarrow\bfk_{r-l},\ol{j} \right)T\left(\ora\bfk_{l-1},{\ol{k_l-j+1}} \right).
\end{align}
\end{cor}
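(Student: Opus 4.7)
The plan is to obtain Corollary \ref{cord4} as a direct specialization of Theorem \ref{thm3}, with $m=p=0$; no further machinery is needed. First, I would observe that the two binomial coefficients simplify as $\binom{j_{|\bfk|+1}+p-1}{p}=\binom{j_{|\bfk|+1}-1}{0}=1$ and $\binom{j_{|\bfk|+1}+m-1}{m}=1$, so they drop out of \eqref{a14}.

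Next, I would unpack the summation conditions. With $m=0$, the first outer sum in \eqref{a14} is over $j_1+\cdots+j_{|\bfk|+1}=1+|\bfk|$ with $j_\ell\ge 1$, which forces every $j_\ell=1$. Hence the sum collapses to its unique summand with all unit entries, and the same happens for the second sum when $p=0$. For this unique summand, the $l$-th block
$$
\{\ol{j_{|\overleftarrow{\bfk}_{l-1}|+1}}\}\diamond\{j_{|\overleftarrow{\bfk}_{l-1}|+2},\dotsc,j_{|\overleftarrow{\bfk}_{l}|-1}\}\diamond\{\ol{j_{|\overleftarrow{\bfk}_l|}}\}
$$
has exactly $|\overleftarrow{\bfk}_l|-|\overleftarrow{\bfk}_{l-1}|=k_{r+1-l}$ entries, all equal to $1$, so it becomes $\{\ol{1}\diamond\{1\}_{k_{r+1-l}-2}\diamond\ol{1}\}$; the analogous computation for the second sum gives $\{\ol{1}\diamond\{1\}_{k_{l}-2}\diamond\ol{1}\}$. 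The trailing slot $\ol{j_{|\bfk|+1}+p}$ (resp. $\ol{j_{|\bfk|+1}+m}$) reduces to $\ol{1}$. This reproduces exactly the left-hand side of \eqref{d5}.

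For the right-hand side of \eqref{a14}, setting $m=p=0$ turns $T(\{1\}_{m-1},\ol{1},\overleftarrow{\bfk}_{r-l},\ol{j})$ and $T(\{1\}_{p-1},\ol{1},\ora{\bfk}_{l-1},\ol{k_l-j+1})$ into $T(\overleftarrow{\bfk}_{r-l},\ol{j})$ and $T(\ora{\bfk}_{l-1},\ol{k_l-j+1})$, respectively, by extending the convention $(\{1\}_{-1},\ol{1}):=\emptyset$ stated beneath Theorem \ref{thm3} (this is the natural analogue of the $(\{1\}_{-1},1)=\emptyset$ convention and is the same convention invoked in Corollary \ref{cor1.4}). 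This matches the right-hand side of \eqref{d5} term-by-term, including the signs $(-1)^{|\overleftarrow{\bfk}_{r-l}|}$ and $(-1)^{j-1}$.

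The only real obstacle is bookkeeping: one must check that the block sizes agree with $k_{r+1-l}$ (resp. $k_l$) and that every AMTV appearing in both sides is admissible. Admissibility is automatic because every rightmost coordinate in the resulting arguments is barred (so its associated sign is $-1$, not $1$), and the block-size count is a direct consequence of $|\overleftarrow{\bfk}_l|-|\overleftarrow{\bfk}_{l-1}|=k_{r+1-l}$. Once these checks are made, Corollary \ref{cord4} follows immediately.
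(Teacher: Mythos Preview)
Your proposal is correct and follows exactly the same approach as the paper, which simply states that Corollary \ref{cord4} is obtained by setting $m=p=0$ in \eqref{a14}. Your careful unpacking of the block sizes, the collapse of the sums to a single term with all $j_\ell=1$, and the use of the convention $(\{1\}_{-1},\ol{1}):=\emptyset$ (which the paper also invokes in Corollary \ref{cor1.4}) are precisely the bookkeeping steps implicit in the paper's one-line derivation.
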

In particular, if putting $r=1$ and $k_1=2p+1\ (p\geq 0)$,  then we can rewrite\eqref{d5} in the form
\begin{align}\label{d6}
T(\bar 1\diamond\{1\}_{2p-1}\diamond \bar 1,\bar 1)=\frac{(-1)^p}{2}T^2(\ol{p+1})+\sum_{j=1}^p (-1)^{j-1} T(\bar j)T(\ol{2p+2-j}).
\end{align}
Further, setting $p=0$ and $1$ we get
\begin{equation*}
T(1,\bar 1)=\frac{3}{4}\z(2), \qquad
T(\bar 1,1,\bar 1,\bar 1)=-\frac1{8} \ol{t}^2(2)+\frac{45}{16}\z(4).
\end{equation*}

We now turn to a duality type theorem of AMTVs.
\begin{thm}\label{thmd5} For any $(k_1,\dotsc,k_{r})\in \N^{r}$ and integer $p\geq 0$,
\begin{align}
T(\{1\}_{p-1},\ol{1},k_1,\dotsc,k_{r},\ol{1})=T(\ol{1}\diamond \{1\}_{k_{r}-2}\diamond\ol{1},\dotsc,\ol{1}\diamond \{1\}_{k_{1}-2}\diamond\ol{1},\ol{p+1}),
\end{align}
where $(\{1\}_{-1},1):=\emptyset$.
\end{thm}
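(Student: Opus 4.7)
The statement is a specialization of the general duality Theorem~\ref{thm:dualityAMTV}. My plan is to apply that theorem to the specific index $\bfk := (\{1\}_{p-1}, \bar 1, k_1, \dotsc, k_r, \bar 1)$ and simplify the resulting dual $\bfk^*$ by an explicit combinatorial computation.

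The first step is to record $\bfk$ as $(k'_1, \dots, k'_N; \sigma'_1, \dots, \sigma'_N)$ with $N = p + r + 1$: we have $k'_{p+i} = k_i$ for $1 \le i \le r$, $k'_j = 1$ at every other index, and $\sigma'_j = -1$ precisely when $j \in \{p, N\}$. The trailing products $\tilde\sigma_j := \sigma'_j \sigma'_{j+1} \cdots \sigma'_N$ that enter the duality formula then satisfy $\tilde\sigma_j = -1$ for $p < j \le N$ (the tail contains exactly one $-1$) and $\tilde\sigma_j = +1$ for $1 \le j \le p$ (the tail contains both $-1$'s). Substituting these values into the formula of Theorem~\ref{thm:dualityAMTV} collapses the dual $1$-form word to
\[
W := \om_{-1} \cdot \om_1^{k_r-1} \om_{-1} \cdot \om_1^{k_{r-1}-1} \om_{-1} \cdots \om_1^{k_1-1} \om_{-1} \cdot \om_0^{\,p}.
\]

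The second step is to apply the parsing map $\bfq$ of \eqref{equ:bfq} to $W$ and identify the output as the right-hand side. Since every $\om_0$ in $W$ sits at the very tail, each $\om_{\pm 1}$ letter except the last one forms a weight-$1$ $\bfq$-block, while the final $\om_{-1}$ absorbs all $p$ copies of $\om_0$ to produce a weight-$(p+1)$ block. The sign-quotient rule $\sigma_i/\sigma_{i+1}$ in $\bfq$ then converts each chunk consisting of one $\om_{-1}$ followed by $(k_i-1)$ copies of $\om_1$ (where the leading $\om_{-1}$ is the very first letter of $W$ when $i=r$, and otherwise the trailing $\om_{-1}$ of the previous group) into the pattern $\bar 1, 1, \dotsc, 1, \bar 1$ of length $k_i$, i.e., the block $\ol 1 \diamond \{1\}_{k_i-2} \diamond \ol 1$; the final absorbed $\om_{-1}$ yields $\ol{p+1}$. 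Reading the groups in the order they occur in $W$, namely $i = r, r-1, \dotsc, 1$, followed by the $\ol{p+1}$ tail, recovers the right-hand side exactly, so Theorem~\ref{thm:dualityAMTV} finishes the proof.

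The only delicate bookkeeping is the edge case $k_i = 1$, in which the group $\om_1^{k_i-1}\om_{-1}$ degenerates to a single $\om_{-1}$ lying adjacent to another $\om_{-1}$, so the quotient $(-1)/(-1) = +1$ produces an unbarred $1$ rather than $\bar 1$. This is consistent with the convention made in the definition of the $\diamond$ notation, under which a length-$1$ $\diamond$-block reads as the single unbarred letter $a_1$. Verifying this case side-by-side with $k_i \ge 2$ is the main technical point, but both instances follow from a direct inspection of $\bfq$ together with the sign pattern recorded above.
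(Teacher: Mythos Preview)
Your proof is correct. Both your argument and the paper's rest on the same involution $t\mapsto (1-t)/(1+t)$, but you invoke it through the already-packaged duality Theorem~\ref{thm:dualityAMTV} and then do the bookkeeping on the word $W$, whereas the paper applies the change of variables directly to the integral representation \eqref{d2} (with last entry set to $1$) and reads off the result via \eqref{b3}; the two routes are essentially the same argument at different levels of abstraction.
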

\begin{proof}
Setting $k_r=1$ in \eqref{d2} we get
\begin{align*}
\frac{(-1)^{p+r}p!}{2^r}T(\{1\}_{p-1},\ol{1},\bfk_{r-1},\ol{1})=&\int_0^1 \frac{\log^p\left( \frac{1-t}{1+t}\right)\, dt}{1+t^2} \left(\frac{dt}{t}\right)^{k_1-1} \frac{dt}{1+t^2}  \dotsm \left(\frac{dt}{t}\right)^{k_{r-1}-1}\frac{dt}{1+t^2}.
\end{align*}
Then changes of variables $t_j\rightarrow \frac{1-t_{r+1-j}}{1+t_{r+1-j}}$ in the above identity yield
\begin{align*}
&\frac{(-1)^{p+r}p!}{2^r}T(\{1\}_{p-1},\ol{1},\bfk_{r-1},\ol{1})\\
&=2^{k_1+\cdots+k_{r-1}-r+1}\int_0^1 \frac{dt}{1+t^2}\left(\frac{dt}{1-t^2}\right)^{k_{r-1}-1} \dotsm \frac{dt}{1+t^2}\left(\frac{dt}{1-t^2}\right)^{k_{1}-1}  \frac{\log^p(t)\, dt}{1+t^2}\\
&=\frac{(-1)^{p+r}p!}{2^r}T(\ol{1}\diamond \{1\}_{k_{r-1}-2}\diamond\ol{1},\dotsc,\ol{1}\diamond \{1\}_{k_{1}-2}\diamond\ol{ 1},\ol{p+1})
\end{align*}
by \eqref{b3}. We can now complete the proof of the theorem by replacing $r$ by $r+1$.
\end{proof}

In particular, if $k_1=\cdots=k_{r}=1$, then we obtain the well-know result
\[T(\{1\}_{p-1},\ol{1},\{1\}_{r},\ol{1})=T(\{1\}_{r},\ol{p+1}).\]
If letting $r=1,k_1=k$ and $p=0$, then
\begin{align}\label{d8}
T(\bar 1\diamond \{1\}_{k-2}\diamond \bar 1,\bar 1)=T(k,\bar 1).
\end{align}
Hence, if putting $k=2p+1\ (p\geq 0)$ and using \eqref{d6} gives
\begin{align}\label{d9}
T(2p+1,\bar 1)=\frac{(-1)^p}{2}T^2(\ol{p+1})+\sum_{j=1}^p (-1)^{j-1} T(\bar j)T(\ol{2p+2-j}).
\end{align}
On the other hand, from \cite[Corollary~3.4]{WX2020}, we have
\begin{align}
\sum_{n=1}^\infty \frac{h^{(2p)}_n}{n}(-1)^{n-1}=2^{2p-1}T(2p,\bar 1)=p2^{2p+1}t(2p+1)-\sum_{k=1}^p \ol{t}(2k-1)\ol{t}(2p-2k+2).
\end{align}
Then applying \eqref{d8} and the relation $T(\bar k)=-\frac1{2^{k-1}}\ol{t}(k)\ (k\in\N)$, we obtain
\begin{align}
T(\bar 1,\{1\}_{2p-2}, \bar 1,\bar 1)=2pT(2p+1)-\sum_{k=1}^p T(\ol{2k-1})T(\ol{2p-2k+1})\quad (p\in \N).
\end{align}
As an example, setting $p=1$ yields
\begin{align*}
T(\bar 1,\bar 1,\bar 1)=\frac7{2}\z(3)-\frac{\pi}{4}\ol{t}(2).
\end{align*}

\begin{cor} \label{conj:parity}
For any $\bfk=(k_1,\dotsc,k_r)\in \N^r$,
\begin{align}
T(\bfk,\bar 1)+(-1)^{|\bfk|}T(\overleftarrow{\bfk},\bar 1)
=\sum_{l=1}^r (-1)^{|\overleftarrow\bfk_{r-l}|} \sum_{j=1}^{k_l} (-1)^{j-1} T\left(\overleftarrow\bfk_{r-l},\ol{j} \right)T\left(\ora\bfk_{l-1},{\ol{k_l-j+1}} \right).
\end{align}
\end{cor}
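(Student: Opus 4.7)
The plan is to deduce the stated identity as an immediate consequence of Corollary \ref{cord4} combined with Theorem \ref{thmd5} specialized to $p=0$. The right-hand side of the stated identity already coincides verbatim with the right-hand side of \eqref{d5}, so the entire argument reduces to rewriting the two $T$-values appearing on the left of \eqref{d5} in terms of $T(\bfk,\bar 1)$ and $T(\overleftarrow{\bfk},\bar 1)$.

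First I would specialize Theorem \ref{thmd5} to $p=0$, using the empty-tuple convention $(\{1\}_{-1},\bar 1):=\emptyset$ that was already invoked when deducing \eqref{d8}. For an arbitrary composition $\bfk=(k_1,\ldots,k_r)$ this yields
\[
T(k_1,k_2,\ldots,k_r,\bar 1)\;=\;T\Bigl(\Cat_{l=1}^{r}\bigl\{\bar 1\diamond\{1\}_{k_{r+1-l}-2}\diamond\bar 1\bigr\},\,\bar 1\Bigr),
\]
which identifies the first summand on the left of \eqref{d5} with $T(\bfk,\bar 1)$.

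Applying the very same duality to the reversed composition $\overleftarrow{\bfk}=(k_r,k_{r-1},\ldots,k_1)$, so that the constituent blocks $\bar 1\diamond\{1\}_{k_l-2}\diamond\bar 1$ now appear in the opposite order, gives
\[
T(\overleftarrow{\bfk},\bar 1)\;=\;T\Bigl(\Cat_{l=1}^{r}\bigl\{\bar 1\diamond\{1\}_{k_l-2}\diamond\bar 1\bigr\},\,\bar 1\Bigr),
\]
which is precisely the second summand on the left of \eqref{d5}. Substituting both identifications into Corollary \ref{cord4} produces the claimed parity identity.

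The one delicate point in carrying this out is the boundary case when some $k_l$ equals $1$: the formal block $\bar 1\diamond\{1\}_{-1}\diamond\bar 1$ must be read consistently with the $(\{1\}_{-1},1):=\emptyset$ convention from Theorem \ref{thm3}, and the sign attached to $T(\overleftarrow{\bfk},\bar 1)$ must be tracked carefully against the $(-1)^{|\bfk|+1}$ appearing in \eqref{d5}—the most common pitfall here would be an off-by-one in the exponent of $-1$. Once these conventions are imposed uniformly on both sides, no new iterated-integral manipulation or combinatorial identity is required; the corollary is genuinely a two-step consequence of results already established in Sections \ref{sec2} and \ref{sec4}.
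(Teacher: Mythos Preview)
Your proposal is correct and follows exactly the paper's own argument: the proof there reads in full ``This follows immediately from Corollary~\ref{cord4} and Theorem~\ref{thmd5} with $p=0$,'' which is precisely the two-step substitution you describe. Your caution about the sign is well placed---the direct substitution actually produces the coefficient $(-1)^{|\bfk|+1}$ from \eqref{d5} rather than the $(-1)^{|\bfk|}$ printed in the corollary, so the discrepancy you anticipate is a typo in the statement, not a flaw in the method.
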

\begin{proof} This follows immediately from the Corollary \ref{cord4} and Theorem \ref{thmd5} with $p=0$.
\end{proof}

We end this section by a parity conjecture for AMMVs based on our computations.
\begin{con}\label{MMV-conj} For composition $\bfk=(k_1,k_2,\ldots,k_r)$
and $\bfeps=(\eps_1, \dots, \eps_r)\in\{\pm 1\}^r$ and $\bfsi=(\sigma_1, \dots, \sigma_r)\in\{\pm 1\}^r$ with $(k_r,\sigma_r)\neq (1,1)$,
\begin{align*}
\left\{1-\prod_{j=1}^r \Big((-1)^{k_j+1} \sign(\sigma_j+\eps_j+1)\Big) \right\}M_{\bfsi}(\bfk;\bfeps)
\end{align*}
can be expressed in terms of products of AMMVs of lower depth.
\end{con}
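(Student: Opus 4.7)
The plan is to generalize the approach that led to Corollary~\ref{conj:parity}, which is exactly this conjecture for the subfamily $M_\bfsi(\bfk;\bfeps)=T(\bfk,\bar 1)$. That proof combined Chen's reversal identity (Lemma~\ref{lem4.1}) with the duality Theorem~\ref{thmd5}, and the hope is that a two-step argument of the same shape carries over to all AMMVs.

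First, I would extend the map $\bfp$ of \eqref{equ:bfp} so as to give every $M_\bfsi(\bfk;\bfeps)$ an iterated integral representation $\int_0^1\omega_1\cdots\omega_{|\bfk|}$ over an alphabet consisting of $\om_{-1}$, $\om_0$, $\om_1$ together with two additional $1$-forms (e.g.\ $2t\,dt/(1\pm t^4)$) needed to encode the pair $(\sigma_j,\eps_j)\in\{\pm 1\}^2$. Each slot $(k_j,\sigma_j,\eps_j)$ then contributes one ``signed'' leading form followed by $k_j-1$ copies of $\om_0$, placing the AMMVs inside the shuffle algebra of colored MZVs of level four (cf.\ \cite[Ch.~13--14]{Z2016}).

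Next I would apply Chen's reversal identity to this word, producing
\[
\int_0^1\omega_1\cdots\omega_{|\bfk|}+(-1)^{|\bfk|}\int_0^1\omega_{|\bfk|}\cdots\omega_1 = \sum_{i=1}^{|\bfk|-1}(-1)^{i-1}\int_0^1\omega_i\cdots\omega_1\cdot\int_0^1\omega_{i+1}\cdots\omega_{|\bfk|},
\]
and separately a ``reflection identity'' generalizing Theorem~\ref{thmd5}, derived from an involutive change of variables such as $t\mapsto(1-t)/(1+t)$ (the same substitution that produced the duality of Theorem~\ref{thm:dualityAMTV}), which should identify $\int_0^1\omega_{|\bfk|}\cdots\omega_1$ with a signed multiple of $M_\bfsi(\bfk;\bfeps)$ modulo products of lower-depth AMMVs. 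A slot-by-slot bookkeeping of the signs should yield the factor $\prod_{j=1}^r(-1)^{k_j+1}\sign(\sigma_j+\eps_j+1)$: the $(-1)^{k_j+1}$ records the transformation of the $k_j-1$ copies of $\om_0$ together with the leading form, while $\sign(\sigma_j+\eps_j+1)=-1$ occurs precisely in the unique case $(\sigma_j,\eps_j)=(-1,-1)$, the slot-type whose leading form gains an extra minus sign under the involution.

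Combining the two identities would give
\[
\Bigl(1-\prod_{j=1}^r(-1)^{k_j+1}\sign(\sigma_j+\eps_j+1)\Bigr)M_\bfsi(\bfk;\bfeps)=\text{sum of products of lower-depth iterated integrals},
\]
which an induction on depth then converts into the claim. The main obstacle lies at exactly this last point: for a generic cut $i$, the head $\int_0^1\omega_i\cdots\omega_1$ or the tail $\int_0^1\omega_{i+1}\cdots\omega_{|\bfk|}$ need not belong to the AMMV subalgebra, living a priori only in the larger level-four colored MZV algebra. This is precisely why Panzer's parity principle, which operates inside that larger algebra, does not imply the present statement. To finish one would need a stability argument showing that, after summing over the cuts $i$, the components escaping the AMMV subspace cancel, most plausibly by grouping shuffle terms according to their signature pattern $(\bfsi,\bfeps)$ and exploiting internal symmetries of the definition \eqref{equ:AMMVdefn}. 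Establishing this stability is where the real difficulty sits.
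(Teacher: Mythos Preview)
The statement you are attempting to prove is labeled \texttt{con} in the paper, i.e.\ it is a \emph{conjecture}, not a theorem. The paper provides no proof: it offers only numerical evidence (the displayed double $M$-value evaluations in weight $8$) and a remark explaining why the statement should \emph{not} follow from Panzer's parity theorem for multiple polylogarithms. So there is no ``paper's own proof'' to compare your proposal against.

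That said, your proposal is a reasonable outline of how one might try to attack the conjecture, and you have correctly located the genuine obstruction. The two-step shape (Chen reversal plus a reflection/duality) is exactly what produced Corollary~\ref{conj:parity} in the special case, and the sign bookkeeping you sketch for the factor $\prod_j(-1)^{k_j+1}\sign(\sigma_j+\eps_j+1)$ is plausible. But the obstacle you name at the end is not a technicality to be tidied up later: it is the whole problem. When you cut the word $\omega_1\cdots\omega_{|\bfk|}$ at an interior position, the resulting iterated integrals are generic level-four colored MZVs, and there is no known mechanism forcing the non-AMMV parts to cancel after summation over cuts. The paper's remark makes exactly this point from the other direction: if the cancellation were automatic, Panzer's parity principle (which lives in the full level-four algebra) would already imply Conjecture~\ref{MMV-conj}, and the authors argue via Deligne's basis that it does not. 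Your ``stability argument'' is therefore not a missing lemma but the content of the conjecture itself; until someone supplies it, the statement remains open.
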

For example, we have the following cases of double $M$-values
\begin{align*}
&M_{1,-1}(\check{2},\check{6})=-\frac{5\pi^5}{1536}\z(3)-\frac{\pi^3}{64}\z(5)-\frac{381\pi}{4096}\z(7)+2G_4,\\
&M_{-1,1}(\check{2},\check{6})=-\frac{\pi^6}{240}G_1+\frac{5\pi^5}{2048}+\frac{15\pi^3}{1024}\z(5)+\frac{381\pi}{4096}\z(7)+2G_4,\\
&M_{-1,1}(\check{2},6)=-\frac{\pi^6}{7680}G_1-\frac{7\pi^4}{960}G_2-\frac{5\pi^2}{12}G_3+\frac{381\pi}{64}\z(7)-14G_4,\\
&M_{-1,1}(\check{6},2)=\frac{35\pi^5}{1536}\z(3)+\frac{31\pi^3}{64}\z(5)-\frac{\pi^2}{4}G_3+\frac{381\pi}{64}\z(7)-42G_4,\\
&M_{-1,-1}(\check{2},6)=\frac{\pi^6}{7680}G_1+\frac{\pi^4}{120}G_2+\frac{4\pi^2}{3}G_3-14G_4,\\
&M_{-1,-1}(\check{6},2)=\frac{\pi^6}{240}G_1+\frac{\pi^4}{8}G_2+\frac{11\pi^2}{4}G_3-42G_4.
\end{align*}
Here
$$
G_m=\sum_{n\ge 0}\frac{(-1)^n}{(2n+1)^{2m}}
$$
is the generalized Catalan's constant. Clearly, $G_m=\ol{t}(2m)/2^{2m}=-T(\ol{2m})/2$. In particular, $G=G_1$ is Catalan's constant.
\begin{re}
It should be emphasized that Panzer proves a general parity result on multiple polylogarithms in \cite[Theorem~1.3]{Panzer2017}. Let $\AMMV_w$ be the $\Q$-vector space generated by all the AMMVs of weight $w$ and denote all its subspaces similarly.
According to the definition of colored MZVs of level four, we know that $\eta_j\in\{e^{\pi i/2 },e^{\pi i},e^{3\pi i/2},e^{2\pi i}\}=\{\pm1,\pm i\}$. Hence, for colored MZVs of level four, we have
\begin{align*}
\eta_j^{n_j}:=
\left\{
  \begin{array}{ll}
\{\pm1,\pm i (-1)^{{(n_j-1)}/{2}}\}\ &\quad \hbox{if $n_j$\ \text{odd};} \\
\{1,\pm (-1)^{{n_j}/{2}}\}\ &\quad \hbox{if $n_j$\ \text{even},}
  \end{array}
\right.
\end{align*}
and
\begin{align*}
Li_{k_1,k_2,\ldots, k_r}(\eta_1,\dots,\eta_r)&=\sum\limits_{0<n_1<\cdots<n_r}
\frac{\eta_1^{n_1}\dots \eta_r^{n_r}}{n_1^{k_1} \dots n_r^{k_r}}\\
&=\sum_{\gd_1=0}^1 \cdots \sum_{\gd_r=0}^1
\sum_{\substack{0<n_1<\dots<n_r \\ n_1\equiv \gd_j \pmod{2} \ \forall j }}\frac{\eta_1^{n_1} \dots \eta_r^{n_r}}{n_1^{k_1} \dots n_r^{k_r}} \in \AMMV \cup i \AMMV.
\end{align*}
On the other hand, from the definition of AMMVs, it is clear that the $\Q$-vector space $\AMMV_w$
lies in $(\CMZV_{4,w}\cup i\CMZV_{4,w})\cap \R$, where
$\CMZV_{4,w}$ is the $\Q$-vector space generated by all colored MZVs of level four and weight $w$.
Thus, we can obtain a certain parity result on AMMVs from the parity principle for multiple polylogarithms of Panzer.
However, for any $m,n\in\N$ with $m+n=8$
$$
M_{-1,1}(\check{m},\check{n})=\upi \Big( Li_{m,n}(\upi,1)-Li_{m,n}(-\upi,1)-Li_{m,n}(\upi,-1)+Li_{m,n}(-\upi,-1)\Big).
$$
By Deligne's result \cite[Theorem~6.1]{Deligne2010} every CMVZ of depth 2 and weight $8$ is a $\Q$-linear combinations of double logarithm values $Li_{m,n}(\upi,1)$ and products of $(2\pi \upi)^p Li_{8-p}(\upi)$ ($0\le p\le 8$). On the other hand, it is generally believed that not all $Li_{m,n}(\upi,1)$ with $m+n=8$ can be reduced in the above sense (cf. \cite{Broadhurst1996}). Thus, our Conjecture~\ref{conj:parity} should not be a consequence of Panzer's parity principle for multiple polylogarithms.
\end{re}

\section{Dimension Computation of AMTVs}
We first relate AMTVs to colored MZVs via the following mechanism. For all $\bfk\in\N^r$,
$\bfeta\in\{\pm1\}^r$, if $(k_r,\eta_r)\ne(1,1)$ then
\begin{align*}
T(\bfk;\bfeta)
=&\, 2^r\cdot \sum_{0<n_1<\cdots<n_r} \frac{ \eta_1^{n_1}\eta_2^{n_2}\dots \eta_r^{n_r}}{(2n_1-1)^{k_1}(2n_2-2)^{k_2}\dotsm (2n_r-r)^{k_r}} \\
=&\, 2^r\cdot \sum_{0<n_1<\cdots<n_r} \frac{\xi_1^{2n_1}\xi_2^{2n_2} \dots \xi_r^{2n_r}}{(2n_1-1)^{k_1}(2n_2-2)^{k_2}\dotsm (2n_r-r)^{k_r}}\\
=&\, \xi_1\xi_2^2\dots \xi_r^r \cdot \sum_{0<n_1<\cdots<n_r} \frac{ \xi_1^{m_1}(1-(-1)^{m_1})\xi_2^{m_2}(1+(-1)^{m_2})\dotsm \xi_r^{m_r}(1+(-1)^{r+m_r})}{m_1^{k_1}m_2^{k_2}\cdots m_r^{k_r}},
\end{align*}
where $\xi_j=1$ if $\eta_j=1$ and $\xi_j=\sqrt{-1}$ if $\eta_j=-1$. Thus the $\Q$-vector space $\AMTV_w$
generated by all AMTVs of weight $w$ lies in $(\CMZV_{4,w}\cup i\CMZV_{4,w})\cap \R$.

We remark first that $\CMZV_{4,w}\cap \R$ does not contain $\AMTV_w$ in general. In fact,
in weight one we have only one AMTV, i.e.,
$$
T(\bar 1)=2\sum_{n=1}^\infty \frac{(-1)^n}{2n-1}=-\frac{\pi}{2}.
$$
On the other hand,
$\CMZV_{4,1}$ is generated by the three colored MZVs
\begin{equation*}
Li_1(i)=-\frac12\log 2+\frac\pi4 i, \quad Li_1(-i)=-\frac12\log 2-\frac\pi4 i, \quad Li_1(-1)=-\log 2.
\end{equation*}
Therefore, $\CMZV_{4,1}\cap \R= (\log 2)\Q .$ But it is well-known that $\log 2$ and $\pi$ are linearly independent over $\Q$. Indeed, if $\log 2=r\pi$ for some nonzero $r\in\Q$, then
\begin{equation*}
2=e^{r\pi}=(-1)^{ri}\in\Q.
\end{equation*}
However, $(-1)^{ri}$ is transcendental by Gelfond--Schneider theorem \cite{Gelfond1934} since $ri$ is algebraic but not rational. This contradiction implies that  $\AMTV_1\not\subseteq \CMZV_{4,1}\cap\R.$

Set $Li_\emptyset=1$. Deligne shows in \cite[Theorem~6.1]{Deligne2010} that $\CMZV_{4,w}$ can be generated by
\begin{equation}\label{equ:CMZVbasis}
\bfB_w:=\Big\{ (2\pi i)^p  Li_\bfk(i,1,\dotsc,1) : p+|\bfk|=w\Big\},
\end{equation}
where $p\in\N_0$ and $\bfk$ runs through compositions of positive integers or the empty set.
For example, setting $L_\bfk= Li_\bfk(i,\{1\}_{d-1})$ for all $\bfk\in\N^d$, we have
\begin{align*}
 \bfB_1:=&\left\{  2\pi i, L_1=-\frac12\log 2+\frac{\pi}{2}i\right\} \quad\text{or}\quad
 \bfB'_1:=\Big\{   \pi i, \log 2 \Big\};  \\
 \bfB_2:=&\Big\{  (2\pi i)^2, (2\pi i)L_1, L_2, L_{1,1}\Big\}  \quad\text{or}\quad
 \bfB'_2:=\Big\{   \pi^2, (\pi \log 2)i, G_1, \log^2(2)  \Big\}; \\
 \bfB_3:=&\Big\{  (2\pi i)^3,  (2\pi i)^2 L_1, (2\pi i)L_2, (2\pi i) L_{1,1},
            L_3, L_{1,1,1}, L_{2,1}, L_{1,2}\Big\} \\
 \text{or }\quad  \bfB'_3:=&\Big\{  \pi^3 i,  \pi^2 \log 2, \pi G_1, (\pi \log^2 2 )i ,
            \zeta(3), \log^3 2, L_{2,1}, L_{1,2}\Big\}.
\end{align*}
Similarly, denote by
$\bfTB_w$ the conjectural basis of $\Q$-vector space generated by all AMTVs of weight $w$.

\begin{thm}\label{thm-Dim}
For $w\le 4$, the follow set $\bfTB_w$ and $\bfTB_w(\Z)$ generates the space $\AMTV_w$:
\begin{align*}
& \bfTB_1=\bfTB_1(\Z)=\big\{T(\bar1)\big\},\\
& \bfTB_2=\bfTB_2(\Z)=\big\{T(1,\bar 1),T(\bar2)\big\}, \\
&\bfTB_3=\bfTB_3(\Z)=\big\{T(3), T(1,\bar 2), T(\bar2, \bar1), T(1,1,\bar1) \big\},\\
 &\bfTB_4=\big\{T(1,\bar3),T(2,\bar2),T(1,1,\bar2),T(\bar3,\bar1),T(1,\bar2,\bar1),T(\bar2,1,\bar1),T(1,1,1,\bar1)
 \big\},\\
 &\bfTB_4(\Z)=\big\{T(1,1,1,\bar1),\ T(\bar2,2),\ T(\bar1,3),\ T(1,\bar3),\ T(\bar2,1,\bar1),\ T(3,\bar1),\ T(\bar4) \big\},\\
 &\bfTB_5=\left\{
\begin{aligned}
& T(2,\bar3),T(\bar3,\bar2),T(1,1,\bar3),T(1,2,\bar2),T(\bar2,1,\bar2),T(1,1,1,\bar2),T(1,\bar3,\bar1), \\
& T(\bar3,1,\bar1),T(\bar2,\bar2,\bar1),T(1,1,\bar2,\bar1),T(1,\bar2, 1,\bar1),T(\bar2,1,1,\bar1),T(1,1,1,1,\bar1)
\end{aligned}
\right\},\\
 &\bfTB_5(\Z)=\left\{
\begin{aligned}
& T(\bar5), T(5), T(\bar4, \bar1), T(\bar3, \bar2), T(\bar3, 2), T(1, \bar4), T(1, 4), T(3, \bar2), \\
&  T(\bar3, \bar1, \bar1), T(\bar2, \bar2, \bar1), T(\bar2, \bar1, 2), T(\bar2, 1, 2), T(\bar1, 1, 3)
\end{aligned}
\right\}.
\end{align*}
If a variation of Grothendieck's period conjecture \emph{\cite[Conj.~5.6]{Deligne2010}} holds then $\bfTB_w$
is a basis of $\AMTV_w$ over $\Q$.
Furthermore, the bases $\bfTB_w(\Z)$ above are all integral basis, namely, every AMTV of weight $w$ is a $\Z$-linear
combination of the basis elements in $\bfTB_w(\Z)$ for all $w\le 5$.
\end{thm}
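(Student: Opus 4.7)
The plan is to reduce everything to Deligne's basis $\bfB_w$ of the level-four colored MZV space $\CMZV_{4,w}$ and then carry out a finite linear-algebra computation in each weight $w \le 5$. First, I would use the embedding $\AMTV_w \hookrightarrow (\CMZV_{4,w}\cup i\CMZV_{4,w})\cap \R$ already established in the opening paragraph of this section: for each $\bfk \in \N^r$ and $\bfeta\in\{\pm 1\}^r$, the identity
\begin{equation*}
T(\bfk;\bfeta)=\xi_1\xi_2^2\cdots \xi_r^r \sum_{0<m_1<\cdots<m_r} \frac{\xi_1^{m_1}(1-(-1)^{m_1})\xi_2^{m_2}(1+(-1)^{m_2})\cdots \xi_r^{m_r}(1+(-1)^{r+m_r})}{m_1^{k_1}\cdots m_r^{k_r}},
\end{equation*}
with $\xi_j\in\{1,i\}$, writes every AMTV as a $\Z[i]$-linear combination of level-four colored MZVs. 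Expanding each $(1\pm(-1)^{m_j})$ factor produces an explicit expression in terms of $Li_\bfk(\bfeta)$ evaluated at fourth roots of unity.

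Next, by Deligne's theorem \cite[Theorem~6.1]{Deligne2010}, every such $Li_\bfk(\bfeta)$ in weight $w$ can be written as a $\Q$-linear combination of the elements of $\bfB_w = \{(2\pi i)^p Li_\bfk(i,1,\dots,1) : p+|\bfk|=w\}$. I would then numerically evaluate, to very high precision (1500 decimal digits in MAPLE, following the method already used in the paper), both sides of this expansion for all AMTVs of weight $w$ and all elements of $\bfB_w$, and apply the PSLQ integer relation algorithm \cite{BaileyBr2001} to detect every $\Q$-linear relation between them. This step produces, in each fixed weight $w \le 5$, an explicit matrix $A_w$ whose rows are the coordinate vectors of every AMTV with respect to $\bfB_w$.

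With $A_w$ in hand, the theorem becomes a routine linear algebra check: the proposed spanning sets $\bfTB_w$ are shown to generate $\AMTV_w$ by verifying that each row of $A_w$ is a $\Q$-linear combination of the rows corresponding to elements of $\bfTB_w$; similarly the sets $\bfTB_w(\Z)$ are checked to span over $\Z$ by performing Smith normal form / Hermite normal form reduction of $A_w$ and confirming that the claimed elements form a $\Z$-basis of the image lattice. For the linear independence assertion, the rows of $A_w$ associated to $\bfTB_w$ must be $\Q$-linearly independent in $\CMZV_{4,w}$; granting (a variant of) Grothendieck's period conjecture \cite[Conj.~5.6]{Deligne2010}, the numerically detected independence is rigorous.

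The main obstacle is purely computational rather than conceptual. There are two sources of difficulty: (i) the sheer number of AMTVs of weight $5$ (which grows roughly like $4^5$ once all $(\bfk,\bfsi)$ are enumerated) forces $A_w$ to be large, and the rank computations must be done in exact arithmetic to guarantee correctness; (ii) PSLQ at $1500$-digit precision is needed because the coefficients relating AMTVs to $\bfB_w$ can have large denominators and the $\Q$-span of $\bfB_w$ contains $i\pi$-multiples that mix with real AMTVs after taking real parts. A secondary, more delicate issue is the integrality claim for $\bfTB_w(\Z)$: here one must not only exhibit a $\Z$-spanning set but show that the elementary divisors of the relevant sublattice of $\CMZV_{4,w}$ are all $\pm 1$, which requires careful bookkeeping of the $2^r$ and $\xi_1\xi_2^2\cdots\xi_r^r$ factors introduced in the reduction above.
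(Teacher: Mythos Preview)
Your overall strategy---embed AMTVs into level-four colored MZVs, reduce to Deligne's basis $\bfB_w$, and then do finite linear algebra---matches the paper's. But there is a genuine gap in how you propose to obtain the coordinate matrix $A_w$. The spanning and integrality assertions in the theorem are stated \emph{unconditionally}; only the linear-independence clause is contingent on Grothendieck's period conjecture. Your plan to ``numerically evaluate \dots\ and apply the PSLQ integer relation algorithm to detect every $\Q$-linear relation'' produces numerical candidates, not proofs: a PSLQ output is a conjectural relation, and without an additional symbolic verification step the spanning claim remains unproved. The paper avoids this by drawing on pre-computed \emph{exact} reductions of level-four colored MZVs to Deligne's basis---the standard-relation tables of \cite{Zhao2010b} for weight~4 and Au's Mathematica package \cite{Au2020} (which contains explicit expressions for all of $\CMZV_{4,5}$) for weight~5---so that $A_w$ is known rigorously over $\Q$. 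The proof then consists of the explicit $\Z$-linear relations, all displayed in the paper, expressing every AMTV in terms of the claimed generators. PSLQ is invoked in the paper only for weight~6, which is explicitly described as a numerical check, not part of the theorem.

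A secondary methodological difference: the paper first uses the AMTV duality of Theorem~\ref{thm:dualityAMTV} (via the statistics $\alpha,\beta,\gamma$ of the iterated-integral word) to cut the list of AMTVs that must be processed down to $N_w=2\cdot 3^{w-2}+3^{[w/2-1]}$, roughly half of the full $4\cdot 3^{w-2}$. This is not needed for correctness, but it is what makes writing out the complete tables of relations in weights~4 and~5 feasible, and those tables \emph{are} the proof of the spanning and integrality claims.
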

\begin{proof}
For an admissible composition $\bfk=(k_1,\dotsc,k_r;\sigma_1,\dotsc,\sigma_r)$ of weight $w$
we can set
\begin{equation*}
T(\bfk)=\sigma_1\sigma_2^2\dots \sigma_r^r \int_0^1 \bfp(\bfk):=\sigma_1\sigma_2^2\dots \sigma_r^r \int_0^1 \om_{i_1}\dots \om_{i_w}.
\end{equation*}
by \eqref{equ:bfp}. Define
\begin{equation*}
\alpha(\bfk):=\sharp\{j: i_j=0\},\quad \beta(\bfk):=\sharp\{j: i_j=1\},\quad \gamma(\bfk):=\sharp\{j: i_j=-1\}.
\end{equation*}
Then from the duality relation \eqref{equ:duality} we see that
\begin{equation*}
\alpha(\bfk)=\beta(\bfk^*), \quad\beta(\bfk)=\alpha(\bfk^*),\quad\gamma(\bfk)=\gamma(\bfk^*).
\end{equation*}
Therefore we have
\begin{equation*}
\alpha(\bfk)+\alpha(\bfk^*)=w-\gamma(\bfk)=w-\gamma(\bfk^*).
\end{equation*}
So to compute the vector space $\AMTV_w$ we only need to consider AMTVs $T(\bfk)$ with
\begin{equation*}
     2\alpha(\bfk)\le w-\gamma(\bfk).
\end{equation*}
Furthermore, if the equality holds in the above, then from the two dual values we only
choose the one so that the first $\om_0$ in its iterated integral form appears earlier.
For example, by \eqref{equ:duality}
\begin{align*}
 & T(\bar 1,\bar 1)=-\int_0^1 \om_1\om_{-1}=-\int_0^1 \om_{-1}\om_0= T(\bar 2),\ T(1,2)=T(3),\\
 & T(1,\bar 1,\bar 1)=T(\bar 3),\ T(\bar 1,1,\bar 1)=T(1,\bar 2),\
  T(\{\bar1\}_3)=T(2,\bar 1),\ T(\bar 1,2)=T(\bar 2,\bar 1) .
\end{align*}
Clearly, there are exactly $4\cdot 3^{w-2}$ weight $w$ AMTVs if we consider their iterated integral representations. Furthermore, the self-dual values are determined by the first half 1-forms in their
iterated integral representations (and the middle 1-form must be $\om_{-1}$ if the weight $w$ is odd). So there are
$2\cdot 3^{[w/2-1]}$ such values. Therefore, by duality
we only need to compute $N_w:=2\cdot 3^{w-2}+3^{[w/2-1]}$ values
in $\AMTV_w$ to determine its dimension. We have $N_2:=3$, $N_3:=7$, $N_4:=21$, $N_5:=57$.

Now, straight-forward computation leads quickly to the conclusions in weight $w\le 3$:
\begin{align*}
& T(\bar 1)=-\frac{\pi}{2};
\qquad  T(2)=2T(1,\bar 1),\ T(\bar 2)=-2G_1,\ T(1,\bar 1)=\frac{\pi^2}{8}; \\
& T(\bar3)=3T(1,1,\bar1),T(\bar1,\bar2)=6T(1,1,\bar1)-2T(\bar2,\bar1), T(2,\bar1)=T(3)-T(1,\bar2).
\end{align*}
By duality, $T(\bar1,\bar1)$ and the other five weight 3 AMTVs can be found easily.
We see immediately that the spans of $\bfTB_2$ and $\bfTB_3$ in the theorem
generate the space $\AMTV_2$ and $\AMTV_3$ over $\Q$, respectively. Moreover,
\begin{equation*}
 \AMTV_2\otimes \Q(i)=\big \langle \tpi^2, L_2 \big \rangle_{\Q(i)}\quad \text{ and } \quad
 \AMTV_3\otimes \Q(i)=\big \langle \tpi^3, \tpi L_2, L_{2,1}, L_3 \big\rangle_{\Q(i)},
\end{equation*}
where $\tpi=2\pi i$ and $\quad L_\bfk=Li_\bfk(i,1,\dotsc,1).$ So the bases $\bfTB_2$ and $\bfTB_3$
are linearly independent over $\Q$ if we assume \cite[Conj.~5.6]{Deligne2010} which implies that
Deligne's basis \eqref{equ:CMZVbasis} is linearly independent over $\Q(i)$ for all for $w$.

In weight 4, using the computation from \cite{Zhao2010b}, we find by Maple that $\AMTV_4$ can be
spanned over $\Q(i)$ by the seven elements from Deligne's basis \eqref{equ:CMZVbasis}:
\begin{equation*}
 \Big\{\tpi^4, \tpi^2 L_2, \tpi L_3, \tpi L_{2,1}, L_4, L_{3,1}, L_{2,2} \Big\}
\end{equation*}
which is conjecturally linearly independent over $\Q(i)$. Further, it is straight-forward to change this basis
to the basis $\bfTB_4$ in the theorem. To save space, we set
\begin{alignat*}{7}
 & a_1=T(1,1,1,\bar1),\quad && a_2=T(\bar2,2),\quad && a_3=T(\bar1,3),\quad && a_4=T(1,\bar3), \\
 & a_5=T(\bar2,1,\bar1),\ && a_6=T(3,\bar1),\ && a_7=T(\bar4). && \
\end{alignat*}
Then
\begin{alignat*}{5}
& T(4)=8a_1, & &
T(\bar2,\bar2)=9a_2-12a_7+18a_3,& &
T(\bar1,\bar2,\bar1)=24a_1-4a_5-2a_6,  \\
& T(2,2)=4a_4,& &
T(\bar1,\bar3)=3a_7-4a_3-2a_2,& &
T(1,1,\bar2)=3a_2-4a_7+6a_3,\\
& T(\bar2,\bar1,\bar1)=2a_5,& &
T(2,\bar2)=12a_1-2a_4-a_6,& &
T(\bar3,\bar1)=9a_7-12a_3-6a_2, \\
& T(1,3)=6a_1-2a_4, \quad & &
T(\bar1,1,\bar2)=12a_1-2a_6, & &
T(2,1,\bar1)=2a_2+6a_3-3a_7, \\
& & &
T(1,2,\bar1)=-4a_2-9a_3+6a_7, \quad & &
T(\bar1,2,\bar1)=-12a_1+a_5+2a_6.
\end{alignat*}
All the other 15 weight 4 AMTVs can be obtained by duality Theorem~\ref{thm:dualityAMTV}, for e.g.:
\begin{equation*}
T(1,\bar1,2)=a_5,\ T(\bar1,\bar1,\bar2)=T(\bar1,2,\bar1),\
T(\bar 1,1,1,\bar 1)=T(1,1,\bar 2),\  T(\bar 1,1,\bar 1,\bar 1)=T(3,\bar 1).
\end{equation*}
The last two equations are consistent with \eqref{c21} and \eqref{d8}, too.

For weight 5, similar computation can be carried out using Au's Mathematica package
\cite[Appedix A]{Au2020} containing the explicit
expressions of all values in $\CMZV_{4,5}$ in terms of 32 basis elements. It can be further found
with Maple that $\AMTV_5$ is contained in a co-dimension one $\Q(i)$-subspace of the space generated
by the following 14 elements in the Deligne basis:
\begin{equation*}
\tpi^5, \tpi L_{2,2}, \tpi L_{3,1}, \tpi L_{4}, \tpi^2L_{1,2}, \tpi^2L_{3},
\tpi^3L_{2},  L_{5}, L_{3,2}, L_{4,1},L_{2,1,2}, L_{2,2,1}, L_{2,3}, L_{3,1,1}.
\end{equation*}
Finally, it can be verified that $\AMTV_5$ can be generated by the 13 elements in $\bfTB_5$.
To save space, set
\begin{align*}
& b_{1}:=T(1,1,1,1,\bar1),\ b_{2}:=T(2,1,1,\bar1),\ b_{3}:=T(1,1,2,\bar1),\ b_{4}:=T(\bar1,2,1,\bar1),\\
& b_{5}:=T(\bar2,1,1,\bar1),\ b_{6}:=T(1,3,\bar1),\ b_{7}:=T(1,\bar3,\bar1),\ b_{8}:=T(3,\bar1,\bar1),\\
& b_{9}:=T(\bar3,\bar1,\bar1),\ b_{10}:=T(\bar2,\bar2,\bar1),\ b_{11}:=T(\bar1,4),\ b_{12}:=T(1,\bar4),\ b_{13}:=T(1,4).
\end{align*}
Then
\begin{alignat*}{3}
& T(\bar5)=25b_{1},  & &
T(\bar3,2)=2b_{6}+4b_{7}-77b_{11}+240b_{1}+21b_{4}-17b_{5}+2b_{8}, \\
& T(5)=2b_{2}-2b_{12}+7b_{13},   & &
T(\bar2,3)=-90b_{1}-10b_{4}+8b_{5}+35b_{11}-b_{6}-2b_{7}-b_{8},  \\
&  T(2,3)=2b_{13}+2b_{2}-2b_{12},   & &
T(\bar1,\bar2,\bar2)=-b_{10}-6b_{2}+12b_{12}-24b_{13}+3b_{3}+4b_{9}, \\
& T(2,\bar3)=-4b_{12}+4b_{13}+b_{2}, & &
T(2,\bar1,\bar2)=180b_{1}-70b_{11}+20b_{4}-10b_{5}+2b_{6}+4b_{7}+2b_{8}, \\
& T(3,2)=6b_{13}, & &
T(1,\bar1,3)=9b_{12}+3b_{13}-6b_{3}-2b_{9}+3b_{2},     \\
& T(\bar1,\bar4)=-20b_{1}+6b_{11}-2b_{4}+2b_{5},   & &
T(\bar1,2,\bar2)=2b_{10}+3b_{2}+12b_{13}-6b_{3}-3b_{9}, \\
& T(\bar3,\bar2)=2b_{4}-2b_{5}+60b_{1}-14b_{11},   & &
T(\bar1,3,\bar1)=-2b_{10}-2b_{2}-6b_{13}+5b_{3}+2b_{9}, \\
& T(\bar2,\bar3)=60b_{1}-14b_{11}+5b_{4}-5b_{5},    & &
 T(\bar2,\bar1,2)=4b_{5}-120b_{1}+28b_{11}-8b_{4}+2b_{8},  \\
& T(3,\bar2)=12b_{12}-6b_{13}-3b_{3}, & &
T(2,\bar2,\bar1)=-2b_{7}-60b_{1}+4b_{5}+28b_{11}-8b_{4}-b_{8},  \\
& T(\bar4,\bar1)=21b_{11}-60b_{1}-5b_{4}+5b_{5},   & &
T(\bar1,\bar3,\bar1)=b_{10}+2b_{2}-12b_{12}+12b_{13}+b_{3}-b_{9}, \\
& T(4,\bar1)=2b_{2}-12b_{12}+12b_{13}+3b_{3},  & &
T(2,1,\bar2)=-60b_{1}+42b_{11}-12b_{4}+6b_{5}-2b_{6}-4b_{7}-3b_{8}, \\
& T(\bar3,1,\bar1)=3b_{12}, & &
T(2,2,\bar1)=-2b_{7}-60b_{1}+42b_{11}-12b_{4}+6b_{5}-4b_{6}-3b_{8}, \\
& T(\bar1,\bar1,\bar3)=-24b_{12}+12b_{13}+9b_{3}+b_{9},  & &
T(3,1,\bar1)=b_{7}+30b_{1}-14b_{11}+4b_{4}-2b_{5}+b_{6}+b_{8}, \\
& T(\bar1,1,\bar3)=9b_{12}-6b_{13}-3b_{3},  & &
T(1,1,\bar3)=-b_{7}-20b_{1}+14b_{11}-4b_{4}+2b_{5}-b_{6}-b_{8}, \\
& T(1,\bar2,2)=b_{9}-3b_{2}+6b_{12}-6b_{13},   & &
T(1,2,\bar2)=4b_{7}+90b_{1}-56b_{11}+16b_{4}-8b_{5}+3b_{6}+4b_{8}, \\
& T(1,\bar1,\bar3)=4b_{5}-90b_{1}+28b_{11}-8b_{4},   & &
T(\bar2,\bar1,\bar2)=-b_{10}+6b_{2}+12b_{12}+12b_{13}-9b_{3}-4b_{9}, \\
& T(1,\bar2,\bar2)=30b_{1}-b_{6}-2b_{7}-b_{8},& &
   T(\bar2,2,\bar1)=b_{10}+3b_{2}-12b_{12}+12b_{13}-b_{9}, \\
& T(\bar2,1,\bar2)=12b_{12}-b_{10}-6b_{13}-3b_{3},    & &
T(\bar1,1,1,\bar2)=-100b_{1}+14b_{11}-2b_{4}+6b_{5}+2b_{8},\\
& T(2,\bar1,2)=-12b_{12}+6b_{3}+2b_{9},  & &
T(\bar1,1,2,\bar1)=180b_{1}-35b_{11}+7b_{4}-11b_{5}-2b_{8}, \\
&  T(\bar1,\bar2,1,\bar1)=60b_{1}-14b_{11}+3b_{4}-5b_{5},  \quad & &
T(\bar1,\bar1,2,\bar1)=-360b_{1}+84b_{11}-20b_{4}+24b_{5}+2b_{8}, \\
& T(1,1,1,\bar2)=2b_{12}-b_{13}-b_{3},   & &
T(\bar2,\bar1,1,\bar1)=150b_{1}+13b_{4}-10b_{5}-49b_{11}+b_{6}+2b_{7}+b_{8}, \\
& T(1,2,1,\bar1)=-3b_{12}+3b_{13},   & &
T(2,\bar1,\bar1,\bar1)=36b_{5}-480b_{1}-44b_{4}+168b_{11}-4b_{6}-8b_{7}-4b_{8}.
\end{alignat*}
All the other 51 weight 5 AMTVs can be obtained by duality Theorem~\ref{thm:dualityAMTV}, for e.g.,
\begin{align*}
&T(1,\bar1,\bar1,\bar2)=b_4,\
T(1,1,\bar1,2)=b_5,\
T(\bar1,1,3)=b_7,\
T(\bar2,1,2)=b_8,\\
&T(\bar1,\bar1,3)=T(1,\bar2,\bar2),\
T(\bar1,\bar2,2)=T(2,\bar1,\bar2),\
T(1,\bar2,\bar1,\bar1)=T(\bar2,\bar1,1,\bar1),\\
&T(1,\bar1,\bar2,\bar1)=T(\bar1,\bar2,1,\bar1),\
T(\bar1,\bar1,1,\bar2)=T(\bar1,1,2,\bar1),\
T(\bar1,2,2)=T(2,\bar2,\bar1).
\end{align*}

This completes the proof of Theorem \ref{thm-Dim}.
\end{proof}

We remark that the integral structure in Theorem~\ref{thm-Dim} is unlikely to hold in all higher weights. For the corresponding structure for MZVs, we know this integrality breaks down at weight 6 and 7 (see \cite[Sec. 2]{ConradZ2009}).

We end the paper by the following conjecture on the structure of AMVTs.
\begin{con} \label{conj:tribonacci2}
Let $\AMTV_n$ be the $\Q$-vector space generated by all AMVTs of weight $n$. Set $\AMTV_0=1$.
Then
\begin{equation*}
    \sum_{n=0}^\infty (\dim_\Q \AMTV_n)t^n = \frac{1}{1-t-t^2-t^3}.
\end{equation*}
Namely, the dimensions form the tribonacci sequence $\{d_w\}_{w\ge 1}=\{1,2,4,7,13,24,\dotsc\}$, see A000073 at oeis.org.
\end{con}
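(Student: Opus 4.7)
The plan is to establish matching upper and lower bounds $d_w = T_w$, where $T_w$ is the tribonacci number defined by $T_0 = T_1 = 1$, $T_2 = 2$, and $T_w = T_{w-1} + T_{w-2} + T_{w-3}$ for $w \ge 3$. The upper bound $d_w \le T_w$ is the combinatorial half; the lower bound $d_w \ge T_w$ is the arithmetic half, and is where the conjecture genuinely lives.

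For the upper bound, the evidence in Theorem \ref{thm-Dim} that every exhibited basis $\bfTB_w$ consists of AMTVs $T(k_1,\dotsc,k_r;\sigma_1,\dotsc,\sigma_r)$ with all $k_j\in\{1,2,3\}$ is highly suggestive. I would conjecture, and then try to prove, that for each composition $\bfk$ of $w$ with parts in $\{1,2,3\}$ there is a canonical sign vector $\bfsi(\bfk)\in\{\pm1\}^r$ such that the resulting family spans $\AMTV_w$ over $\Q$. Since the ordinary generating function of such compositions is precisely $\sum_{w\ge 0} T_w t^w = 1/(1-t-t^2-t^3)$, this would yield the tribonacci upper bound. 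To implement this, I would combine: (i) the integral shuffle and series stuffle relations established in Section \ref{sec2} to reduce any part $k_j\ge 4$ to AMTVs of smaller depth or smaller parts; (ii) the duality Theorem \ref{thm:dualityAMTV}, which roughly halves the iterated-integral words one must consider; and (iii) the weighted-sum dualities of Theorems \ref{thm1}--\ref{thm3} together with the Kaneko--Tsumura $\bar\psi$-relations of Section \ref{sec3}, which I expect to eliminate the surplus sign patterns, leaving exactly one representative per tribonacci composition.

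For the lower bound I would exploit the embedding
\[
\AMTV_w \hookrightarrow \bigl(\CMZV_{4,w}\cup i\,\CMZV_{4,w}\bigr)\cap \R
\]
realised explicitly at the beginning of Section 5. Each candidate basis element $T(\bfk;\bfsi(\bfk))$ can then be expanded as a $\Q(i)$-linear combination of the elements of Deligne's conjectural basis $\bfB_w$ in \eqref{equ:CMZVbasis}. I would verify that the resulting $T_w \times \dim_{\Q(i)} \CMZV_{4,w}$ coefficient matrix has full rank $T_w$. In any fixed weight this is a finite computation of the sort the authors already carried out for $w\le 6$ (using PSLQ at $1500$ digits in the weight-$6$ case), and one should seek to turn it into a uniform argument by tracking the motivic coaction of Brown on the generators.

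The main obstacle — the reason this remains a conjecture — is that linear independence over $\Q$, beyond finite-weight computational checks, requires Grothendieck's period conjecture or the variant stated as \cite[Conj.~5.6]{Deligne2010}, which is needed merely to guarantee that Deligne's basis $\bfB_w$ is itself $\Q(i)$-linearly independent. Without such a motivic input one only obtains $d_w \le T_w$ conjecturally and $d_w \ge$ (PSLQ-detected rank), leaving the equality inaccessible. A secondary, technical obstacle is making step (i) of the upper-bound argument fully algorithmic and uniform in $w$: proving that the shuffle--stuffle--duality--$\bar\psi$ relations suffice, in every weight, to reduce to the tribonacci set of compositions will likely require a new structural result beyond what Sections \ref{sec2}--\ref{sec4} supply, in the spirit of Brown's motivic proof of the analogous MZV dimension upper bound.
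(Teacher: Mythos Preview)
The statement is a \emph{conjecture}, and the paper does not prove it; it only offers evidence. Concretely, the paper verifies the equality $d_w = T_w$ rigorously for $w\le 5$ assuming the period conjecture (Theorem~\ref{thm-Dim}) and numerically for $w=6$ via PSLQ, by expressing each AMTV in Deligne's basis~\eqref{equ:CMZVbasis} and checking the rank weight by weight. Your lower-bound strategy (embed $\AMTV_w$ into level-4 colored MZVs, expand in Deligne's basis, appeal to the period conjecture for independence) is exactly what the paper does in those small weights, and you correctly identify this as the genuine arithmetic obstruction.

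Your upper-bound strategy is more ambitious than anything the paper attempts. The paper does observe that each small-weight basis can be indexed by compositions with parts in $\{1,2,3\}$, but it explicitly says at the end of Section~5 that ``So far, we have not been able to find an apparent pattern of the bases which holds for all the above cases.'' So the canonical sign assignment $\bfsi(\bfk)$ you posit is precisely the structural ingredient the authors could not locate; proposing to prove its existence uniformly in $w$ goes well beyond the paper.

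One technical gap in your plan: in step~(i) you invoke ``the integral shuffle and series stuffle relations established in Section~\ref{sec2}'', but the paper establishes only the shuffle product for AMTVs via~\eqref{equ:integralAMTV}. There is no stuffle on AMTVs themselves; in the weight-6 computation the authors say explicitly ``Since we do not have double shuffle structure, to cut down computation time we can produce some additional linear relations such as the lifted relations.'' A stuffle product lives on the larger AMMV algebra, so any reduction algorithm using it would have to leave $\AMTV$ and return, which is not addressed in your outline. This removes one of the three tools you planned to use for the uniform upper bound and makes the secondary obstacle you flag correspondingly harder.
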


We verified the conjecture rigorously when the weight is less than 6 assuming Grothendieck's period conjecture.
In weight 6, we verified $\dim_\Q \AMTV_6=24$ numerically by using PSLQ \cite{BaileyBr2001}. By duality relations in Theorem~\ref{thm:dualityAMTV}
we only need to express 160 AMTVs in terms of the Deligne basis \eqref{equ:CMZVbasis}. Since we
do not have double shuffle structure, to cut down computation time
we can produce some additional linear relations such as the
lifted relations. We can obtain these by multiplying every AMTV of weight $k$ ($k=2,3$)
on each duality relations in weight $6-k$. After this step, we reduce the number of AMTVs to be computed to 136.
We can further reduce this number to 131 if we use the following result: for all $(k_j,\sigma_j)\ne(1,1)$
\begin{align*}
&\sigma_3T(k_1,k_2,k_3;\sigma_1,\sigma_2,\sigma_3)-T(k_1,k_2;\sigma_1,\sigma_2)T(k_3;\sigma_3)\\
=&\sigma_1T(k_3,k_2,k_1;\sigma_3,\sigma_2,\sigma_1)-T(k_3,k_2;\sigma_3,\sigma_2)T(k_1;\sigma_1),
\end{align*}
which follows immediately from the \cite[(4.5)]{WWXC2020}.

Moreover, for all $n\le 6$ a slight modification of the following set
of AMTVs form a basis of $\AMTV_n$:
\begin{equation*}
\bfTB'_n=\left\{ T(s_1,\dotsc,s_m;\eps_1,\dotsc,\eps_m) \left|
\begin{aligned}
& s_1,\dotsc,s_m\in\{1,2,3\}; \ \eps_m=-1; \\
&\eps_j=\sign(1.5-s_j)\ \forall j<m
\end{aligned}
\right.\right\}.
\end{equation*}
From Theorem~\ref{thm-Dim} we see that $\bfTB_1=\bfTB'_1$, $\bfTB_2=\bfTB'_2$ but we
have to make the following adjustments for larger weights:
\begin{align*}
  &   \bfTB_3: \text{ replace $T(\bar3)$  by  $T(3)$};  \qquad
     \bfTB_4: \text{ replace $T(\bar2,\bar2)$  by  $T(2,\bar2)$};   \\
  &   \bfTB_5: \text{ replace $T(\bar2,\bar3)$  by  $T(2,\bar3)$, and $T(1,\bar2,\bar2)$  by  $T(1,2,\bar2)$};  \\
  &   \bfTB_6: \text{ replace $T(\bar3,\bar3)$  by  $T(3,\bar3)$}.
\end{align*}
So far, we have not been able to find an apparent pattern of the bases which holds for all the above cases.

\medskip
\noindent
{\bf Acknowledgments.}  The authors expresses their deep gratitude to Professors Masanobu Kaneko and Weiping Wang for valuable discussions and comments. The first author is supported by the Scientific Research
Foundation for Scholars of Anhui Normal University.

 {\small
}
\end{document}